\documentclass{amsart}

\usepackage{amsfonts,amsthm,amsmath,tikz-cd,tikz,graphicx}

\usepackage[hidelinks]{hyperref}
\usepackage{xcolor}
\usepackage{libertine}

\newtheorem{theorem}{Theorem}[section]
\newtheorem{proposition}[theorem]{Proposition}
\newtheorem{lemma}[theorem]{Lemma}

\newtheorem{corollary}[theorem]{Corollary}

\theoremstyle{remark}
\newtheorem{remark}[theorem]{Remark}

\theoremstyle{definition}
\newtheorem{definition}[theorem]{Definition}

\newcommand{\Sq}{\mathcal{S}_q^{SL_3}}
\newcommand{\Sc}{\mathcal{S}_1^{SL_3}}
\newcommand{\hookuparrow}{\mathrel{\rotatebox[origin=c]{90}{$\hookrightarrow$}}}
\newcommand{\Oq}{\mathcal{O}_q(SL_3)}

\title{Miraculous cancellations and the quantum Frobenius for $SL_3$ skein modules}

\author[Higgins]{Vijay Higgins}
\address{Department of Mathematics, University of California, Los Angeles, CA 90095, USA}
\email{higginsv@math.ucla.edu}

\thanks{
	2020 {\em Mathematics Classification:} Primary 57K31. Secondary 17B37.\\
	{\em Key words and phrases: Skein modules at roots of unity, Kuperberg webs, quantum groups, Frobenius homomorphism}}
	
\begin{document}
	\date{}

\begin{abstract}
We construct a quantum Frobenius map for the $SL_3$ skein module of any oriented 3-manifold specialized at a root of unity, and describe the map by way of threading certain polynomials along links. The homomorphism is a higher rank version of the Chebyshev-Frobenius homomorphism of Bonahon-Wong. The strategy builds on a previous construction of the Frobenius map for $SL_3$ skein algebras of punctured surfaces, using the Frobenius map of Parshall-Wang for the quantum group $\mathcal{O}_q(SL_3).$
\end{abstract}

\maketitle
\tableofcontents

\section{Introduction}

Skein modules of 3-manifolds were introduced independently by Turaev and Przytycki \cite{Tur88,Prz91} as a generalizations of the Jones polynomial of a link in $S^3.$ The most well-studied skein module is called the Kauffman bracket skein module and is constructed as the space spanned by isotopy classes of framed links in the 3-manifold modulo the Kauffman bracket skein relations. Another well-studied set of skein relations is the HOMFLY-PT skein relations, which define a link invariant which admits specializations to any of the $SL_n$ invariants of Reshetikhin-Turaev \cite{RT90}. The $SL_n$ skein module is a quantization of the $SL_n(\mathbb{C})$ character variety of the manifold \cite{Bul97,Sik05}.

Both the Kauffman bracket and HOMFLY-PT skein modules enjoy the property that the skeins and skein relations are naturally described in terms of framed links. The skein relations for $SL_n$ skein algebras  with $n\geq 3$ are most easily described in terms of certain graphs called webs \cite{Kup96,MOY98,Sik05,CKM14}. The Kauffman bracket skein relations correspond to the $SL_2$ case and are locally described by the classical Temperley-Lieb diagrams of planar matchings. In the case that the 3-manifold is a thickened oriented surface, it easily follows that the $SL_2$ skein module has a nice spanning set of nontrivial multicurves on the surface, which in fact turns out to be a basis \cite{Prz91,SW07}.

In the seminal works \cite{Kup94, Kup96}, Kuperberg used trivalent webs in a disk to construct the $SL_3$ analogue of Temperley-Lieb diagrams. Although the combinatorics of Kuperberg webs are richer and more complicated than the $SL_2$ case, Kuperberg showed that each of the defining relations fit into a linearly recursive procedure for simplifying webs. In this way the skein relations provide an algorithm to rewrite any web as a linear combination of canonically reduced webs which form a basis of so-called non-elliptic webs. Work of Sikora-Westbury \cite{SW07} showed that Kuperberg's rewriting procedure is confluent for webs on surfaces and consequently produces bases for the $SL_3$ skein modules of oriented surfaces. It remains an important problem to describe analogues of these web bases for skein modules of surfaces for the higher rank $n>3$ cases of $SL_n.$ 

When the manifold is a thickened oriented surface, the skein module of the manifold admits a natural algebra structure induced by superposition of skeins. For the case of $SL_2,$ the algebra is also known as the Kauffman bracket skein algebra and is well-studied at this point. Two of the most important breakthroughs in studying the $SL_2$ skein algebra were due to work of Bonahon-Wong. Their first breakthrough was the work \cite{BW11}, in which they constructed a so-called quantum trace map, embedding the skein algebra of any punctured surface into a quantum torus, which is the quantum Teichm\"uller space of Chekhov and Fock \cite{CF99}. In another work \cite{BW16}, they construct a so-called Chebyshev-Frobenius homomorphism which outputs central elements in the skein algebra at roots of unity. The Frobenius homomorphism has been used to great success in studying the representation theory of skein algebras at roots of unity \cite{BW16,FKBL19,GJS19}.

In \cite{Le18}, L\^{e} introduced a finer version of the skein algebra, called the stated skein algebra, which is built from stated tangles rather than links and satisfies a compatibility with the cutting and gluing of surfaces. Using the stated skein algebra, one can find easier reconstructions of the quantum trace map \cite{Le18, CL19} and of the Chebyshev-Frobenius homomorphism \cite{KQ19, BL22}. The stated skein algebra also illuminates a connection between the skein algebra and the quantum group $\mathcal{O}_q(SL_2).$

In \cite{Hig23} a stated skein algebra was constructed for $SL_3$ which extended the skein algebra of Kuperberg webs to one that is compatible with the cutting of punctured surfaces. One important aspect of the construction is that it showed that the Sikora-Westbury basis \cite{SW07} of Kuperberg's non-elliptic webs for the ordinary $SL_3$ skein algebra can be extended to a basis for the stated skein algebra. Independently, \cite{FS22} and \cite{DS24} constructed coordinates for Kuperberg basis webs for punctured surfaces. The work of Kim \cite{Kim22} used these ingredients to construct an $SL_3$ analogue of Bonahon-Wong's quantum trace map, and showed that it was an embedding. Towards a study of higher rank $SL_n$ skein algebras, work of L\^{e}-Sikora \cite{LS22} constructed an $SL_n$ stated skein algebra and splitting homomorphisms which the work of L\^{e}-Yu \cite{LY23} used to construct an $SL_n$ analogue of the quantum trace map. One key ingredient that is currently missing from the study of $SL_n$ skein algebras for $n>3$ is the construction of bases as well as a proof that the splitting homomorphisms and quantum trace maps are injective in the case of punctured surfaces.

In \cite{BH23} a threading operation was defined for links in the $SL_n$ skein algebra which produces central elements in $\mathcal{S}_q^{SL_n}(\Sigma)$ when $q$ is a suitable root of unity. In \cite{Hig23} an algebra homomorphism $F_\Sigma : \Sc(\Sigma) \rightarrow \Sq(\Sigma)$ was constructed when $q$ is a suitable root of unity and every connected component of $\Sigma$ has at least one puncture. That construction used the Frobenius homomorphism $\mathcal{O}_1(SL_3) \rightarrow \mathcal{O}_q(SL_3)$ of Parshall and Wang \cite{PW91}, but did not give a manageable way to compute the image of the map on a link. The goal of this paper is to show that for $SL_3$ the Frobenius homomorphism of \cite{Hig23} applied to a link coincides with the threading operation of \cite{BH23}. Consequently, we show that the threading operation respects the skein relations for $SL_3$ and defines a homomorphism of skein modules.

\subsection{Main results}

The construction of the Chebyshev-Frobenius homomorphism for the $SL_2$ or Kauffman bracket skein algebra \cite{BW16} relied on so-called `miraculous cancellations'  which were incarnations of an identity holding in the quantum group $\mathcal{O}_q(SL_2)$ at a specialization of the quantum parameter $q$ to a root of unity. Suppose that $T_N$ is the $N\text{-th}$ Chebyshev polynomial of the first kind, which is defined by the property that for any matrix $A$ in the (classical) algebraic group $SL_2$ we have

\begin{equation*}
T_N(\text{tr}(A))=\text{tr}(A^N).
\end{equation*}

Now let $a$ and $d$ denote the diagonal elements of the standard quantum matrix of generators $\begin{pmatrix}
	a&b\\
	c&d
\end{pmatrix}$ for the quantum group $\mathcal{O}_q(SL_2).$ When $q^{1/2}$ is a root of unity of odd order $N,$ the following identity holds in $\mathcal{O}_q(SL_2):$

\begin{equation*}
T_N(a+d)=a^N+d^N.
\end{equation*}

The identity of this particular form appears in \cite{KQ19,BL22} but see also \cite{BW16,Bon19,LP19, Le15, BR22} for variations and applications.  

Later in Definition \ref{power sum}, for $N\geq 1,$ we describe a polynomial $P^{(N)}(x,y) \in \mathbb{Z}[x,y]$ which is the $SL_3$ analogue of $T_N.$ For $SL_3$ it is also called the power sum polynomial and is the $n=3$ specialization of the polynomials used in \cite{BH23} to construct central elements in $SL_n$ skein algebras. We prove the polynomial $P^{(N)}$ satisfies the following identities.

\begin{theorem}\label{cancellations}(Corollary \ref{miraculous cancellations})
Let $X$ be the standard quantum matrix of generators $X_{ij}$ for the quantum group $\mathcal{O}_q(SL_3)$ defined over a commutative ring $\mathcal{R}$ containing an invertible element $q^{1/6}.$ Let $\sigma_1$ denote the expression obtained by taking the trace of $X$ and let $\sigma_2$ denote the expression obtained by taking the sum of the $2\text{-by-}2$ principal quantum minors of $X.$

When $q^{1/3}$ is a root of unity of order $N$ coprime to $6$ we have the following identities in $\mathcal{O}_q(SL_3):$
\begin{align*}
P^{(N)}(\sigma_1,\sigma_2)=&X_{11}^N+X_{22}^N+X_{33}^N\\
P^{(N)}(\sigma_2,\sigma_1)=&X_{11}^NX_{22}^N+X_{11}^NX_{33}^N+X_{22}^NX_{33}^N\\
&- X_{12}^NX_{21}^N-X_{13}^NX_{31}^N-X_{23}^NX_{32}^N.
\end{align*}

\end{theorem}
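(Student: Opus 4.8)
The plan is to deduce both identities from a single ``quantum characteristic polynomial'' computation for the generator matrix $X$, in which the hypothesis that $q^{1/3}$ is a primitive $N$th root of unity with $N$ coprime to $6$ causes all of the $q$-dependence to collapse --- the $SL_3$ avatar of the Bonahon--Wong miraculous cancellations. It is worth noting for context that the right-hand sides are exactly $\mathrm{Fr}(e_1)$ and $\mathrm{Fr}(e_2)$, the images of the first two elementary symmetric functions under the Parshall--Wang Frobenius $\mathrm{Fr}\colon\mathcal{O}_1(SL_3)\to\Oq$, so the content of the theorem is that $P^{(N)}$ evaluated on the quantum invariants $\sigma_1,\sigma_2$ computes these Frobenius images; $\mathrm{Fr}$ by itself does not give this, since $\mathrm{Fr}(e_1)$ is a sum of $N$th powers of diagonal generators, not a polynomial in $\mathrm{tr}(X)$.

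I would first reduce the second identity to the first. The algebra $\Oq$ carries an algebra automorphism $\iota$ which, up to the diagonal twist coming from $S^2$, sends $X$ to its quantum inverse-transpose; concretely $\iota(X_{ij})$ is the $(j,i)$ quantum cofactor of $X$. Under $\iota$ the two quantum invariants are interchanged, $\iota(\sigma_1)=\sigma_2$ and $\iota(\sigma_2)=\sigma_1$, since the sum of the principal $2\times2$ quantum minors is, up to normalization, the quantum trace of the quantum cofactor matrix (the ``$\wedge^2 X$'') and applying $\iota$ a second time returns $X$. Moreover $\iota(X_{ii})$ is the $(i,i)$ quantum cofactor $X_{jj}X_{kk}-qX_{jk}X_{kj}$ with $\{i,j,k\}=\{1,2,3\}$, whose $N$th power equals $X_{jj}^N X_{kk}^N - X_{jk}^N X_{kj}^N$ at the root of unity --- a companion of the $SL_2$ cancellation quoted in the Introduction, equivalently the statement that the $N$th power of a $2\times2$ quantum minor matches the Frobenius image of the classical one. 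Applying $\iota$ to the first identity then produces the second, so it suffices to prove
\[
P^{(N)}(\sigma_1,\sigma_2)=X_{11}^N+X_{22}^N+X_{33}^N.
\]

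For this, recall (Definition~\ref{power sum}) that $P^{(N)}$ is the power-sum polynomial: it is determined by $P^{(0)}=3$, $P^{(1)}=x$, $P^{(2)}=x^2-2y$ and the recursion $P^{(N)}=xP^{(N-1)}-yP^{(N-2)}+P^{(N-3)}$, equivalently $P^{(N)}(e_1,e_2)=t_1^N+t_2^N+t_3^N$ for commuting variables $t_i$ with $t_1t_2t_3=1$. (To evaluate the recursion at $(\sigma_1,\sigma_2)$ one needs $\sigma_1\sigma_2=\sigma_2\sigma_1$, which I would check directly from the $R$-matrix relations.) Now let $g$ be the standard diagonal $R$-matrix twist for $SL_3$, with entries powers of $q^{1/3}$, so that $g^N=I$ once $q^{1/3}$ has order $N$ coprime to $6$; set $X^{\langle 0\rangle}=I$ and $X^{\langle k\rangle}=X^{\langle k-1\rangle}\cdot g^{\,k-1}Xg^{\,1-k}$ in $M_3(\Oq)$. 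Using the quantum cofactor identity $X\widehat X=\widehat X X=I$ I would establish a three-term matrix recursion
\[
X^{\langle k\rangle}=q^{a_k}\sigma_1\,X^{\langle k-1\rangle}-q^{b_k}\sigma_2\,X^{\langle k-2\rangle}+X^{\langle k-3\rangle},
\]
with exponents $a_k,b_k$ depending only on $k\bmod 3$. Unwinding this and taking the ordinary trace expresses $\mathrm{tr}\,X^{\langle N\rangle}$ as a $q$-deformed power-sum polynomial in $\sigma_1,\sigma_2$; the relation $g^N=I$ then makes every $q$-exponent in the unwinding trivial, so $\mathrm{tr}\,X^{\langle N\rangle}=P^{(N)}(\sigma_1,\sigma_2)$.

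On the other hand, expanding $X^{\langle N\rangle}$ and tracing gives
\[
\mathrm{tr}\,X^{\langle N\rangle}=\sum_{i_0,\dots,i_{N-1}}q^{\,\phi(i_0,\dots,i_{N-1})}\,X_{i_0i_1}X_{i_1i_2}\cdots X_{i_{N-1}i_0},
\]
a sum over closed length-$N$ paths with a quadratic twist-exponent $\phi$. The heart of the proof, and the step I expect to be the main obstacle, is to show that at the root of unity every term indexed by a non-constant path cancels, so that only the three constant loops survive and $\mathrm{tr}\,X^{\langle N\rangle}=X_{11}^N+X_{22}^N+X_{33}^N$; comparing the two expressions for $\mathrm{tr}\,X^{\langle N\rangle}$ then finishes the argument. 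Concretely, after grouping the path sum by cyclic rotations and using the quantum straightening relations to move each pair $X_{ij}X_{ji}$ ($i\neq j$) past diagonal generators, the coefficient of each surviving noncommutative monomial should be a product of $q^{1/3}$-binomial coefficients $\binom{N}{m}_{q^{1/3}}$ with $0<m<N$, which vanish because $q^{1/3}$ is a primitive $N$th root of unity, while coprimality of $N$ with $6$ keeps the auxiliary quantum integers $[m]_{q^{1/3}}$ for $m<N$ nonzero so that nothing spurious survives. Unlike the $SL_2$ case, where $a$ and $d$ fail to commute only through $bc$, here the three diagonal generators interact through all six off-diagonal generators and several quantum minors, so organizing this cancellation in $\Oq$ is delicate; I would control it by working in a PBW normal-form basis of $\Oq$ (as in~\cite{Hig23}) and tracking leading terms, or by passing to the localization where the quantum Gauss decomposition $X=LDU$ makes $D$ genuinely $q$-commuting and the contributions of $L$ and $U$ visibly cancel at the root of unity.
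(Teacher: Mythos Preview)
Your proposal outlines a direct computational attack inside $\Oq$, but it leaves the decisive step explicitly unfinished: you yourself flag as ``the main obstacle'' the claim that every non-constant closed path in the expansion of $\mathrm{tr}\,X^{\langle N\rangle}$ cancels at the root of unity. Nothing you write before that point reduces the problem to something already known; the proposed recursion for $X^{\langle k\rangle}$ with unspecified exponents $a_k,b_k$, the hoped-for reduction to vanishing $q^{1/3}$-binomials, and the PBW or Gauss-decomposition bookkeeping are all plausible but none is actually carried out. In the $SL_2$ case this kind of direct cancellation argument already requires real work (cf.\ \cite{Bon19}); for $SL_3$ the interaction of the six off-diagonal generators makes it substantially harder, and your sketch does not supply a mechanism that forces the cancellation. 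Similarly, your reduction of the second identity to the first relies on the claim that the $N$th power of a $2\times 2$ quantum minor equals $X_{jj}^NX_{kk}^N-X_{jk}^NX_{kj}^N$, which is itself a miraculous-cancellation statement of the same flavor you are trying to prove.

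The paper's argument avoids all of this. It does not verify any cancellation directly. Instead, the existence of the Frobenius map $F_{\mathcal{A}}:\Sc(\mathcal{A})\to\Sq(\mathcal{A})$ for the annulus has already been established in \cite{Hig23} from the Parshall--Wang map $F_{\mathfrak{B}}=F_{\mathcal{O}_q}$ via the exact sequence for the splitting homomorphism. Since $\Sq(\mathcal{A})=\mathcal{R}[l_+,l_-]$, one knows a priori that $F_{\mathcal{A}}(l_+)=Q(l_+,l_-)$ for \emph{some} polynomial $Q$, and the commutative square with $\Delta_a$ gives $Q(\sigma_1,\sigma_2)=X_{11}^N+X_{22}^N+X_{33}^N$ immediately. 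To identify $Q$, the paper composes with the diagonalization homomorphism $D:\Oq\to\mathcal{R}[\lambda_1,\lambda_2,\lambda_3]/(\lambda_1\lambda_2\lambda_3-1)$, $X_{ij}\mapsto\delta_{ij}\lambda_i$; this collapses the identity to $Q(E_1,E_2)=\lambda_1^N+\lambda_2^N+\lambda_3^N$, which forces $Q=P^{(N)}$ by the defining property of the power-sum polynomial and the algebraic independence of $E_1,E_2$. The second identity then follows by the orientation-reversal symmetry of the skein relations (applying the whole argument to $l_-$), rather than by an explicit antipode-type automorphism $\iota$ as you propose. In short, the ``miracle'' is packaged entirely into the prior existence of $F_{\mathcal{A}}$, and the only new computation is the one-line diagonalization trick; your route, by contrast, would amount to re-deriving a consequence of Parshall--Wang by hand.
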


Using Theorem \ref{cancellations} along with certain fundamental constructions from the $SL_3$ stated skein algebras from \cite{Hig23}, we are able to compute in the case of the annulus $\mathcal{A},$ the image of the Frobenius homomorphism $F_{\mathcal{A}}$ on an essential loop on the annulus. Using a compatibility of the embedding of an annulus in a thickened surface with the cutting of that surface (Lemma \ref{cutting and gluing annulus}), we are able to build up to the following main result.

\begin{theorem}\label{intro main thm}(Theorem \ref{main theorem})
Suppose that $M$ is an oriented 3-manifold. When $q^{1/3} \in \mathcal{R}$ is a root of unity of order $N$ coprime to $6,$ then the threading operation
\begin{equation*}
	L \mapsto L^{[P^{(N)}]}
\end{equation*}
that sends a framed oriented link $L$ in $M$ to the result of threading $P^{(N)}$ along each component of the link defines a homomorphism of skein modules $\mathcal{S}_1^{SL_3}(M) \rightarrow \mathcal{S}_q^{SL_3}(M).$ Furthermore, in the particular case when $M=\Sigma \times (-1,1)$ is a thickened oriented surface, then the threading operation defines an algebra homomorphism of skein algebras $\mathcal{S}_1^{SL_3}(\Sigma) \rightarrow Z(\mathcal{S}_q^{SL_3}(\Sigma)).$ In case the surface has at least one puncture on each connected component, the resulting algebra map coincides with the Frobenius map defined in \cite{Hig23} and is thus known to be injective.
\end{theorem}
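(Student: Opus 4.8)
The plan is to establish the theorem in three stages, moving from the local algebraic identity of Theorem~\ref{cancellations} to the global topological statement. First I would reduce the claim that $L \mapsto L^{[P^{(N)}]}$ respects the skein relations to a local statement. Since the $SL_3$ skein relations are all supported in a ball, and the threading operation acts component-wise, it suffices to check that threading is compatible with the relations in a model neighborhood; the natural model is the annulus $\mathcal{A}$, whose skein algebra $\mathcal{S}_q^{SL_3}(\mathcal{A})$ carries the "threading" structure since threading a polynomial along a loop is literally an element of this algebra acting on the core. So the first key step is to identify the image of the Frobenius map $F_{\mathcal{A}}$ on the core loop with $(\text{core})^{[P^{(N)}]}$. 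This is where Theorem~\ref{cancellations} enters: the core loop of the annulus, under the stated-skein identification with $\mathcal{O}_q(SL_3)$ of \cite{Hig23}, corresponds to $\sigma_1$ (or rather the trace of the standard quantum matrix), and the Frobenius map of \cite{Hig23} is built from the Parshall–Wang map, which sends $X_{ij}^{(1)} \mapsto X_{ij}^N$. The two displayed identities in Theorem~\ref{cancellations} are exactly the statements that $F_{\mathcal{A}}$ applied to the two "fundamental" loops (the one labeled by the standard representation and the one labeled by its dual) equals the threading of $P^{(N)}$ along them. I would phrase this as a lemma: $F_{\mathcal{A}}(\ell) = \ell^{[P^{(N)}]}$ for the core $\ell$ of $\mathcal{A}$, with the dual-weight version following symmetrically.

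Second, I would globalize using the cutting/gluing compatibility recorded in Lemma~\ref{cutting and gluing annulus}. The point is that an arbitrary framed oriented link $L$ in $M$ has a tubular neighborhood, each component sitting inside an embedded $\mathcal{A} \times (-1,1)$; threading $P^{(N)}$ along $L$ is the image, under the map induced by this embedding, of the threaded core loop. Because the embedding of the annulus is compatible with the splitting homomorphisms and because $F$ is natural with respect to such embeddings (this is the content one extracts from \cite{Hig23}), the well-definedness of $F_{\mathcal{A}}$ on $\mathcal{S}_1^{SL_3}(\mathcal{A})$ propagates: any skein relation among the $L^{[P^{(N)}]}$ in $\mathcal{S}_q^{SL_3}(M)$ is the pushforward of a relation already known to hold in $\mathcal{S}_q^{SL_3}(\mathcal{A})$ or between copies of it. Concretely, to see that $L \mapsto L^{[P^{(N)}]}$ is well-defined on $\mathcal{S}_1^{SL_3}(M)$, one checks that if $L$ and $L'$ differ by a single application of an $SL_3$ skein relation in a ball $B \subset M$, then $L^{[P^{(N)}]}$ and $(L')^{[P^{(N)}]}$ differ by the corresponding relation threaded in $B$, and the latter difference vanishes because it is the pushforward along $B \hookrightarrow M$ of the same relation, which holds after threading by the $N=1$ case of the annulus computation combined with Theorem~\ref{cancellations}. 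Linearity in the skein module is then automatic since threading is defined linearly on each component.

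Third, for the thickened-surface case $M = \Sigma \times (-1,1)$: centrality of the image follows from \cite{BH23}, where the threaded loops $L^{[P^{(N)}]}$ were already shown to be central in $\mathcal{S}_q^{SL_3}(\Sigma)$ when $q^{1/3}$ is a root of unity of order coprime to $6$; multiplicativity ($(\text{disjoint union})^{[P^{(N)}]}$ behaves correctly under superposition) is immediate from the component-wise definition. Finally, when every component of $\Sigma$ has a puncture, both our map and the Frobenius map $F_\Sigma$ of \cite{Hig23} are algebra homomorphisms $\mathcal{S}_1^{SL_3}(\Sigma) \to \mathcal{S}_q^{SL_3}(\Sigma)$, and they agree on loops by the annulus lemma together with Lemma~\ref{cutting and gluing annulus}; since $\mathcal{S}_1^{SL_3}(\Sigma)$ is generated as an algebra by loops (equivalently, by the non-elliptic web basis, each web being reducible to products of loops, or more directly by the stated-skein generators), the two algebra maps coincide, and injectivity is inherited from \cite{Hig23}.

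I expect the main obstacle to be the first step — pinning down the precise form of $F_{\mathcal{A}}$ on the core loop so that it matches $\ell^{[P^{(N)}]}$ on the nose. The subtlety is bookkeeping: one must track how the Parshall–Wang Frobenius $\mathcal{O}_1(SL_3) \to \mathcal{O}_q(SL_3)$, the stated-skein identification of the (bigon or annulus) skein algebra with $\mathcal{O}_q(SL_3)$ from \cite{Hig23}, and the definition of $P^{(N)}$ as a power-sum polynomial in the elementary "loop" invariants $\sigma_1, \sigma_2$ all line up, including correct normalizations of the quantum parameter $q^{1/6}$ and the framing/orientation conventions. Once that identification is clean, the miraculous cancellations of Theorem~\ref{cancellations} do the real work: they are precisely the assertion that threading $P^{(N)}$ produces the "Frobenius twist" $X_{ij} \mapsto X_{ij}^N$ at the level of the loop algebra, and everything downstream is naturality and generation.
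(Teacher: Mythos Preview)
Your outline has the right ingredients but the logical direction in your second paragraph is inverted, and this creates a genuine gap. You attempt to prove directly that $L \mapsto L^{[P^{(N)}]}$ respects the skein relations by localizing to a ball $B$: ``$L^{[P^{(N)}]}$ and $(L')^{[P^{(N)}]}$ differ by the corresponding relation threaded in $B$, and the latter difference vanishes because it is the pushforward along $B \hookrightarrow M$ of the same relation.'' This does not work. Threading is a global operation on closed components, while a skein relation in $B$ sees only arcs; moreover, applying a skein relation (e.g.\ a crossing resolution) can change the number of link components, so threading $P^{(N)}$ along each side produces genuinely different multilinear expressions that cannot be compared inside a ball. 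There is no ``threaded relation in $B$'' to push forward. Your third paragraph then compounds the issue: you argue that threading and $F_\Sigma$ are ``two algebra homomorphisms agreeing on loops,'' but this presupposes that threading is already a well-defined algebra map, which is exactly the point in dispute.

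The paper avoids this circularity by running the argument in the opposite direction. For an ideal-triangulable $\Sigma$ the map $F_\Sigma$ is already known to be well defined from \cite{Hig23}; the task is to \emph{compute} $F_\Sigma(K)$ for a single knot $K$. This is done via a commutative cube (Proposition~\ref{commutative cube}) whose faces are: the annulus computation you identified, Lemma~\ref{cutting and gluing annulus}, the defining square for $F_\Sigma$, \emph{and} an explicit description of $F_{\mathfrak{T}}$ on stated tangles in the triangle as framed $N$-th powers (Proposition~\ref{Frobenius triangle}). The injectivity of the splitting map $\Delta_{\mathcal{E}}$ then forces the center square $k_* \circ F_{\mathcal{A}} = F_\Sigma \circ k_*$, giving $F_\Sigma(K)=K^{[P^{(N)}]}$ without ever assuming threading is well defined. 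Well-definedness of threading on punctured surfaces is then a \emph{corollary} of this identification; transparency from \cite{BH23} extends knots to links; and only afterwards does one bootstrap to closed surfaces (by checking compatibility with filling a puncture) and to $3$-manifolds (by pushing forward from the framing surface $\Sigma_W$ of each web). Your proposal is missing both the triangle ingredient and this order of implication.
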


For alternate approaches to the construction of Frobenius homomorphisms for various versions of skein modules, see \cite{GJS19,BR22,KLW24,CKL24}. For the case of $SL_n$ stated skein algebras of surfaces with boundary, Wang has described the Frobenius homomorphism in terms of framed powers of stated arcs in \cite{Wang23}. During preparation of this paper, it was announced in \cite{KW24} that Theorem \ref{intro main thm} will also appear in an upcoming work \cite{KLW24}.

The family of power sum polynomials $P^{(N)}$ used in this paper is an $SL_3$ analogue of the Chebyshev polynomials of the first kind. The Chebyshev polynomials of the first and second kind are ubiquitious in skein theory and have famously appeared in studying the skein algebra of the annulus \cite{Lick97} and the torus \cite{FG00}. They also appear in relation to cluster algebras, positive bases, and categorification \cite{Thur14,Queff22,MQ23}. Higher rank analogues of these polynomials, often referred to as power sum polynomials, have also been used in HOMFLY-PT and Kauffman skein theory \cite{MS17,MPS23}. In $G_2$ skein theory, analogues of power sum polynomials have been used to construct central elements in skein algebras \cite{BBHHMP23}.

We expect that the map from Theorem \ref{intro main thm} will have applications to the representation theory of $SL_3$ skein algebras. For instance, in \cite{BH23} it was conjectured that for a closed surface $\Sigma,$ the image of the Frobenius map $F_\Sigma$ is equal to the center of the skein algebra $Z(\Sq(\Sigma)).$ Indeed, the recent work of \cite{KW24} contains work in this direction. The elements obtained by applying $F_\Sigma$ to basis webs might also have applications to the categorification of $SL_3$ skein algebras.

\subsection{Acknowledgments}
This work was completed while in the Mathematics Department at Michigan State University. I would like to thank Francis Bonahon for many helpful conversations. I am grateful for travel support provided by the Topology group at Michigan State University from the NSF RTG grant DMS-2135960, which allowed me several opportunities to speak about this work at conferences.

\section{The quantum group $\mathcal{O}_q(SL_3)$}

Let $\mathcal{R}$ be a commutative ring containing a distinguished invertible element $q^{1/6}.$ The element $q$ is called the quantum parameter and we are most interested in the case when $q$ is a root of unity. Whenever we speak of the specialization $q=1,$ we will use the convention that $q^{1/6}$ is chosen to be $1$ as well.

We begin by recalling definition of the quantum group $\mathcal{O}_q(SL_3)$ and then the Frobenius map described by Parshall-Wang for the quantum group $\mathcal{O}_q(SL_3)$ when $q$ is a root of unity.

\begin{definition}\label{Oq}
The quantum group $\mathcal{O}_q(SL_3)$ is the quotient of the algebra freely generated over $\mathcal{R}$ by the variables $\{X_{ij} \mid 1 \leq i,j \leq 3\}$ modulo the quantum matrix relations

\begin{equation*}
	X_{ij}X_{lm} = \begin{cases}
		q^{-1}X_{lm}X_{ij} & (i>l, j=m),\\
		q^{-1}X_{lm}X_{ij} & (i=l, j>m),\\
		X_{lm}X_{ij} & (i<l, j>m),\\
		X_{lm}X_{ij}-(q-q^{-1})X_{im}X_{lj},
	\end{cases}
\end{equation*}

along with the quantum determinant relation

\begin{equation*}
	1=\sum_{\sigma \in S_3} (-q)^{l(\sigma)} X_{\sigma(1)1}X_{\sigma(2)2}X_{\sigma(3)3},\\
\end{equation*} where the sum ranges over permutations $\sigma$ and $l(\sigma)$ is the length, or number of inversions, of the permutation $\sigma.$
\end{definition}

The quantum group $\mathcal{O}_q(SL_3)$ is defined for any invertible choice of quantum parameter $q \in \mathcal{R}.$ When $q=1,$ the algebra $\mathcal{O}_1(SL_3)$ is commutative and coincides with the coordinate ring of the matrix group $SL_3.$ The Hopf algebra structure maps for $\mathcal{O}_q(SL_3)$ are inspired by matrix operations. The coproduct $\Delta: \mathcal{O}_q(SL_3) \rightarrow \mathcal{O}_q(SL_3) \otimes \mathcal{O}_q(SL_3)$, counit $\varepsilon: \mathcal{O}_q(SL_3)\rightarrow \mathcal{R}$ and antipode $S:\mathcal{O}_q(SL_3) \rightarrow \mathcal{O}_q(SL_3)$ are given by

\begin{align*}
	\Delta(X_{ij})&=\sum_{r=1}^3 X_{ir}\otimes X_{rj},\\
	\varepsilon(X_{ij})&=\delta_{ij},\\
	S(X_{ij})&=(-q)^{i-j}A[j\vert i],
\end{align*}

where $A[j \vert i]$ denotes the quantum minor of the matrix of generators $X=(X_{ij})$ after deleting row $j$ and column $i.$ See the textbook of Brown-Goodearl \cite{BG02}, for example, for more details about quantum groups $\mathcal{O}_q(G),$ which are dual to the quantum groups $U_q(\mathfrak{g}).$

We will be making use of two special elements of $\Oq,$ which we will call $\sigma_1$ and $\sigma_2.$ We obtain $\sigma_1$ by taking the trace of the matrix of generators, and we obtain $\sigma_2$ by taking the sum of the $2\text{-by-}2$ principal quantum minors of the matrix of generators:

\begin{align*}
\sigma_1:=&X_{11}+X_{22}+X_{33}\\
\sigma_2:=&X_{11}X_{22}-qX_{12}X_{21}\\
&+X_{22}X_{33}-qX_{23}X_{32}\\
&+X_{11}X_{33}-qX_{13}X_{31}.
\end{align*}

When $q=1$ and entries of $X$ are replaced by elements in an algebraically closed field, then $\sigma_1$ and $\sigma_2$ provide formulas for the first and second elementary symmetric sums of the eigenvalues of $X.$ Although quantum matrices do not have eigenvalues in the classical sense, the expressions $\sigma_1$ and $\sigma_2$ often are referred to as quantum spectral data.

\subsection{The Frobenius map for $\mathcal{O}_q(SL_3)$}

Parshall and Wang described the following Hopf algebra map $\mathcal{O}_1(SL_3) \rightarrow \mathcal{O}_q(SL_3),$ called the Frobenius homomorphism, when $q$ is a root of unity in \cite{PW91}. This map is dual to the Frobenius maps for universal enveloping algebras $U_q(sl_3) \rightarrow U_1(sl_3)$ described by Lusztig \cite{Lus90}.

\begin{theorem}\label{Frobenius Oq}(\cite{PW91})
	If $q$ and $q^2$ are both roots of unity of order $N,$ then there exists a Hopf algebra embedding $\mathcal{O}_1(SL_3)$ into the center of $\mathcal{O}_q(SL_3),$ defined on the standard generators $X_{ij}$  by taking $N\text{-th}$ powers:
	
	\begin{align*}
		F_{\mathcal{O}_q}: \mathcal{O}_1(SL_3) &\hookrightarrow Z(\mathcal{O}_q(SL_3))\\
		X_{ij} &\mapsto (X_{ij})^N.
	\end{align*}
\end{theorem}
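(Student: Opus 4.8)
The plan is to verify directly that the assignment $X_{ij}\mapsto X_{ij}^N$ extends to a well-defined algebra homomorphism, then check the Hopf-algebra compatibility, and finally establish centrality and injectivity. Well-definedness means checking that the $N$-th powers $X_{ij}^N$ satisfy the quantum matrix relations with parameter $q=1$, i.e.\ that they commute with one another, and that they satisfy the quantum determinant relation $\sum_{\sigma}(-1)^{l(\sigma)}X_{\sigma(1)1}^N X_{\sigma(2)2}^N X_{\sigma(3)3}^N=1$. The commutation relations split into the three ``easy'' $q^{\pm1}$-commuting cases and the one ``hard'' case involving the correction term $(q-q^{-1})X_{im}X_{lj}$. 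For the easy cases, if $uv=q^{\pm1}vu$ in a ring where $q$ has order $N$, then $u^N v^N = q^{\pm N} v^N u^N = v^N u^N$, so those are immediate. The genuinely delicate relation is $X_{ij}X_{lm} = X_{lm}X_{ij} - (q-q^{-1})X_{im}X_{lj}$ for $i<l$, $j<m$: here one must show $X_{ij}^N X_{lm}^N = X_{lm}^N X_{ij}^N$, i.e.\ that the $N$-th powers commute despite the lower-order entries not commuting.

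The technical heart, therefore, is a $q$-binomial / quantum-Vandermonde style computation inside the subalgebra generated by $\{X_{ij}, X_{im}, X_{lj}, X_{lm}\}$ (a copy of $\mathcal{O}_q(M_2)$, the $2\times 2$ quantum matrix bialgebra embedded as a ``quantum $2\times2$ minor block''). The first step I would take is to reduce to this $2\times2$ situation and record the full list of relations among $a:=X_{ij}$, $b:=X_{im}$, $c:=X_{lj}$, $d:=X_{lm}$: namely $ab=qba$, $ac=qca$, $bd=qdb$, $cd=qdc$, $bc=cb$, and $ad-da=(q-q^{-1})bc$ (equivalently $da=ad-(q-q^{-1})bc$, with $ad = da + (q-q^{-1})bc$). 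Then I would prove by induction the quantum binomial expansion for $(ad)$-type products, or more directly use the known fact that in $\mathcal{O}_q(M_2)$ the quantum determinant $\det_q = ad-qbc$ is central and that $a^N$, $d^N$ commute. Concretely: from $da = ad - (q-q^{-1})bc$ and the fact that $bc$ is central in this block and $q$-commutes appropriately with $a$ and $d$, one obtains by induction a clean formula for $d\,a^k$ and then for $d^N a^N$; when $q^N=1$ (and using that $1+q+\cdots+q^{N-1}=0$, valid since $\gcd(N,\text{ord stuff})$ gives a primitive root situation — here we only need $q$ of order $N$) all the cross terms telescope to zero, giving $a^N d^N = d^N a^N$. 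This is the ``miraculous cancellation'' phenomenon at the level of $\mathcal{O}_q(SL_2)$-blocks; I expect this inductive cancellation to be the main obstacle, as it requires carefully tracking $q$-integer coefficients and invoking $\sum_{j=0}^{N-1}q^j=0$.

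Once all the defining relations are verified for the $N$-th powers, the map $F_{\mathcal{O}_q}$ is a well-defined algebra homomorphism. For the Hopf-algebra structure, one checks compatibility with the coproduct: $\Delta(X_{ij}^N)$ should equal $\sum_r X_{ir}^N\otimes X_{rj}^N$ up to identifying it with $(F_{\mathcal{O}_q}\otimes F_{\mathcal{O}_q})\Delta(X_{ij})$. Since $\Delta(X_{ij})=\sum_r X_{ir}\otimes X_{rj}$ and the three summands $X_{ir}\otimes X_{rj}$ pairwise satisfy the same kind of $q$-commutation relations inside $\mathcal{O}_q(SL_3)^{\otimes 2}$ (with a $q$-matrix structure), the same cancellation argument applied to the multinomial expansion of $(\sum_r X_{ir}\otimes X_{rj})^N$ shows the cross terms vanish, leaving $\sum_r X_{ir}^N\otimes X_{rj}^N$. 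Compatibility with $\varepsilon$ is immediate ($\delta_{ij}^N=\delta_{ij}$), and compatibility with the antipode follows since $S(X_{ij})$ is (up to sign) a quantum $2\times2$ minor, whose $N$-th power is computed by the same minor formula with the lower-order correction again cancelling.

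Finally, centrality: to see $X_{ij}^N$ is central in $\mathcal{O}_q(SL_3)$, one checks it commutes with every generator $X_{lm}$; the relevant relation is again either a pure $q^{\pm1}$-commutation (handled by $q^N=1$) or the mixed relation, handled by the same block computation. Injectivity can be deduced by a dimension/grading or PBW-basis argument: $\mathcal{O}_q(SL_3)$ has a PBW-type basis of ordered monomials in the $X_{ij}$ (e.g.\ from the iterated Ore extension structure), and the monomials in the $X_{ij}^N$ remain linearly independent, so $F_{\mathcal{O}_q}$ sends a basis of $\mathcal{O}_1(SL_3)$ to linearly independent elements; alternatively one cites \cite{PW91} directly for injectivity. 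I would structure the write-up so that the $2\times2$ cancellation lemma is isolated first, since both well-definedness, the coproduct compatibility, and centrality all reduce to it.
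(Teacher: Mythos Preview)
The paper does not supply its own proof of this theorem: it is quoted as a result of Parshall--Wang \cite{PW91} and used as a black box throughout, so there is no in-paper argument to compare against. Your sketch is, in outline, the standard route one would take (and is close in spirit to what Parshall--Wang do): isolate the $2\times 2$ quantum matrix block $a,b,c,d$, prove the key cancellation $a^N d^N = d^N a^N$ by induction using that $bc$ commutes with everything in the block and that the relevant $q$-integer sums vanish, and then reduce well-definedness, centrality, and coproduct compatibility to that lemma.

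One genuine gap in your plan: you list the determinant relation $\sum_{\sigma}(-1)^{l(\sigma)}X_{\sigma(1)1}^N X_{\sigma(2)2}^N X_{\sigma(3)3}^N = 1$ among the things to check but never indicate how. This is not automatic from pairwise commutation of the $X_{ij}^N$; one must show that the $N$-th power of the quantum determinant equals the classical determinant evaluated at the $N$-th powers, i.e.\ $(\det_q X)^N = \det_1(X_{ij}^N)$. Already in the $2\times 2$ block this is the identity $(ad-qbc)^N = a^N d^N - b^N c^N$, which is a separate (and slightly more delicate) cancellation than $a^N d^N = d^N a^N$, and it is here that the hypothesis on the order of $q^2$ (not just of $q$) becomes relevant. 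You should either prove this explicitly in the $2\times 2$ case and then bootstrap to $3\times 3$ via Laplace expansion, or cite it. Similarly, your antipode compatibility step implicitly needs the same minor identity. The rest of your outline is sound, though since the paper treats the theorem as an input, the cleanest option in context is simply to cite \cite{PW91}.
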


We will call the algebra homomorphism $F_{\mathcal{O}_q}$ the Frobenius map for $\Oq.$ In the following section we describe the skein theory for $SL_3$ and describe the connection between skein algebras of surfaces and $\mathcal{O}_q(SL_3)$ which allows us to build a Frobenius homomorphism for skein modules from the Frobenius homomorphism for $\mathcal{O}_q(SL_3).$

\section{$SL_3$ skein modules and algebras}

\subsection{$SL_3$ webs}

An abstract $SL_3$ web $W$ is an oriented graph such that each vertex is a trivalent source or a trivalent sink. It is permitted for $W$ to have connected components which are vertexless oriented loops. It is also permitted for $W$ to be empty.

An $SL_3$ web $W$ in an oriented 3-manifold $M$ is an embedding of $W$ in $M$ equipped with an oriented surface $\Sigma_W$ embedded in $M$ so that the surface contains $W$ and deformation retracts onto $W.$ We further require each edge of $W$ to carry a vertical framing relative to $\Sigma_W.$ If the underlying graph of $W$ has no vertex, then $W \subset M$ is the same as a framed link in $M.$

\begin{definition}\label{skein module}
For an oriented 3-manifold $M,$ the $SL_3$ \textit{skein module of} $M$ is denoted by $\Sq(M)$ and is the quotient of the free $\mathcal{R}\text{-module}$ spanned by isotopy classes of webs $W$ in $M$ subject to the following skein relations of Kuperberg \cite{Kup94,Kup96}.

\begin{align}
	\begin{tikzpicture}[baseline=3ex]
		\node (center) at (1/4,1/2) {};
		\draw[->] (1/2,1) -- (0,0);
		\draw (0,1) -- (center);
		\draw [->] (center)--(1/2,0);
	\end{tikzpicture}&=
	q^{2/3}\begin{tikzpicture}[baseline=3ex]
		\draw [->] (0,1)--(0,0);
		\draw [->] (1/2,1)--(1/2,0);
	\end{tikzpicture} +q^{-1/3} \begin{tikzpicture}[baseline=3ex]
		\draw [<-] (0,0)--(1/4,1/4);
		\draw [<-] (1/2,0)--(1/4,1/4);
		\draw (1/4,1/4)--(1/4,3/4);
		\draw [-<] (1/4,3/4)--(1/4,1/2);
		\draw (1/4,3/4)--(0,1);
		\draw (1/4,3/4)--(1/2,1);
		\draw [->] (0,1)--(1/8,7/8);
		\draw [->] (1/2,1)--(3/8,7/8);
	\end{tikzpicture}&
	\begin{tikzpicture}[baseline=3ex]
		\draw [-<] (1/4,0)--(1/4,1/8);
		\draw (1/4,0)--(1/4,1/4);
		\draw [-<] (1/4,3/4)--(1/4,7/8);
		\draw (1/4,3/4)--(1/4,1);
		\draw [->](1/4,1/4) to [out=135, in=-90] (0,1/2);
		\draw (0,1/2) to [out=90, in =-135] (1/4,3/4);
		\draw [->] (1/4,1/4) to [out=45, in=-90] (1/2,1/2);
		\draw (1/2,1/2) to [out=90, in=-45] (1/4,3/4);
	\end{tikzpicture}&=-(q+q^{-1}) \hspace{.1cm} \begin{tikzpicture}[baseline=3ex]
		\draw [->] (0,1)--(0,1/2);
		\draw (0,1)--(0,0);
	\end{tikzpicture} \label{2gon}\\
	\begin{tikzpicture}[baseline=3ex]
		\node (center) at (1/4,1/2) {};
		\draw[->] (0,1) -- (1/2,0);
		\draw (1/2,1) -- (center);
		\draw [->] (center)--(0,0);
	\end{tikzpicture}&=
	q^{-2/3}\begin{tikzpicture}[baseline=3ex]
		\draw [->] (0,1)--(0,0);
		\draw [->] (1/2,1)--(1/2,0);
	\end{tikzpicture} +q^{1/3} \begin{tikzpicture}[baseline=3ex]
		\draw [<-] (0,0)--(1/4,1/4);
		\draw [<-] (1/2,0)--(1/4,1/4);
		\draw (1/4,1/4)--(1/4,3/4);
		\draw [-<] (1/4,3/4)--(1/4,1/2);
		\draw (1/4,3/4)--(0,1);
		\draw (1/4,3/4)--(1/2,1);
		\draw [->] (0,1)--(1/8,7/8);
		\draw [->] (1/2,1)--(3/8,7/8);
	\end{tikzpicture}&
	\begin{tikzpicture}[baseline=-4ex]
		\draw [>-] (0,0) arc [radius=.4, start angle=90, end angle=465];
	\end{tikzpicture}&=q^2+1+q^{-2} \label{crossing}\\
	\begin{tikzpicture}[baseline=3ex]
		\draw [->] (0,0)--(0,1/8);
		\draw [-<] (0,1/8)--(0,1/2);
		\draw [->] (0,0)--(0,7/8);
		\draw (0,7/8)--(0,1);
		\draw [-<] (1/2,0)--(1/2,1/8);
		\draw [->] (1/2,0)--(1/2,1/2);
		\draw [-<] (1/2,1/2)--(1/2,7/8);
		\draw (1/2,0)--(1/2,1);
		\draw [-<] (0,1/4)--(1/4,1/4);
		\draw (0,1/4)--(1/2,1/4);
		\draw [->] (0,3/4)--(1/4,3/4);
		\draw (1/4,3/4)--(1/2,3/4);
	\end{tikzpicture}&=\begin{tikzpicture}[baseline=3ex]
		\draw [->] (0,0)--(0,1/2);
		\draw (0,0)--(0,1);
		\draw [-<] (1/2,0)--(1/2,1/2);
		\draw (1/2,0)--(1/2,1);
	\end{tikzpicture} + \begin{tikzpicture}[baseline=3ex]
		\draw [<-] plot [smooth] coordinates {(0,1)(1/4,3/4)(1/2,1)};
		\draw [->] plot [smooth] coordinates {(0,0)(1/4,1/4)(1/2,0)};
	\end{tikzpicture}&\begin{tikzpicture}[baseline=-4ex]
		\draw [<-] (0,0) arc [radius=.4, start angle=90, end angle=465];
	\end{tikzpicture}&=q^2+1+q^{-2} \label{loop}
\end{align}	
\end{definition}

Each skein relation takes place in a small ball and is applicable to webs which are equal outside of the ball. The skein relations are depicted with vertical framing. Let $I=(-1,1)$ denote the interval. In the particular case that $M=\Sigma \times I$ is the thickening of an oriented surface $\Sigma,$ the skein module $\Sq(\Sigma \times I)$ will be denoted by the shorter notation $\Sq(\Sigma).$ The module $\Sq(\Sigma)$ has a natural algebra structure, given by superposition, such that product of two elements $[W_1],[W_2] \in \Sq(\Sigma)$ represented by webs $W_1,W_2$ is given by

\begin{equation*}
[W_1][W_2]=[W_1' \cup W_2'],
\end{equation*}
 
where $W_1'$ is obtained by isotoping $W_1$ so that is contained in $\Sigma \times (0,1),$ and $W_2'$ is obtained by isotoping $W_2$ so that it is contained in $\Sigma \times (-1,0).$ When an element in $\Sq(M)$ is represented by a web $W$ in $M,$ we will often denote the element $[W]\in \Sq(M)$ simply by $W$ when it does not cause confusion. The product is generally a noncommutative product, but there are special cases when $\Sq(\Sigma)$ is commutative. One important example is the case $q=1,$ in which case $\Sc(\Sigma)$ is commutative for any surface $\Sigma.$ Another case is when $\Sigma=\mathcal{A}$ is the annulus, in which case $\Sq(\mathcal{A})$ is commutative for any $q.$

For certain simple examples such as $M=S^3$ or $M=S^2 \times I,$ the skein module is isomorphic to the ground ring $\mathcal{R},$ and any link in the manifold can be evaluated to an element of $\mathcal{R},$ which is its $SL_3$ link invariant, a specialization of the HOMFLY-PT polynomial of the link.

\subsection{Webs and the $SL_3$ representation category} Kuperberg first introduced the relations (\ref{2gon})-(\ref{loop}) in \cite{Kup94} in a study of the linearly recurrent nature of link invariants for the rank 2 Lie groups. In a second paper, Kuperberg showed in \cite{Kup96} that the spider, or web category, of $SL_3$ webs describes all morphisms of $U_q(sl_3)\text{-modules}$ (equivalently, $\mathcal{O}_q(SL_3)\text{-comodules}$) between tensor products of the 3-dimensional vector representation and its dual. He further showed that the relations (\ref{2gon})-(\ref{loop}) generate all relations among the morphisms. Kuperberg's result was proven for $\mathcal{R}=\mathbb{C}$ with $q$ either a generic parameter or specialized to $q=1,$ in which cases the category $U_q(sl_3)\text{-mod}$ is semisimple and can be obtained from the web category by a process called idempotent completion. Analogues of Kuperberg's result still hold when $\mathcal{R}$ is any commutative ring and for any invertible $q$ \cite{Hig23,Eli15}.

\subsection{Functoriality of skein modules}\label{Functoriality skein module}

A simple but important property for skein modules is the following functoriality. Suppose that $M,N$ are oriented 3-manifolds and that $i: M\hookrightarrow N$ is an orientation preserving embedding of $M$ into $N.$ The embedding induces a homomorphism of skein modules $i_*: \Sq(M) \rightarrow \Sq(N)$ defined on webs $W$ in the obvious way by setting $i_*([W])=[i(W)].$ The map $i_*$ is well-defined since it takes skein relations in $M$ to skein relations in $N.$ Although $i$ is an embedding, the induced homomorphism $i_*$ need not be injective.

Another apparent but important fact is that if $M_1$ and $M_2$ are disjoint oriented 3-manifolds, then $\Sq(M_1 \sqcup M_2) \cong \Sq(M_1) \otimes \Sq(M_2)$ as $\mathcal{R}\text{-modules}.$ Similarly, if $\Sigma_1$ and $\Sigma_2$ are disjoint oriented surfaces, then $\Sq(\Sigma_1 \sqcup \Sigma_2) \cong \Sq(\Sigma_1) \otimes \Sq(\Sigma_2)$ as $\mathcal{R}\text{-algebras}.$ Together, these facts give that the skein module gives a monoidal functor $\Sq(-)$ on the category whose objects are oriented 3-manifolds and whose morphisms are isotopy classes of orientation-preserving embeddings. 

The main results of this paper concern the skein modules of oriented 3-manifolds and the skein algebras of thickened surfaces. We will use the functoriality of skein modules to use embedded (thickened) surfaces to study skein modules of 3-manifolds. A tool which will allow us to use quantum groups to study the skein algebras of thickened surfaces is the stated skein algebra of a surface with boundary.

\subsection{The $SL_3$ stated skein algebra}

The $SL_3$ stated skein algebra was introduced in \cite{Hig23}, generalizing the Kauffman bracket stated skein algebra introduced by L\^{e} in \cite{Le18}. Here, we will extend the definition of $\Sq(\Sigma)$ from Definition \ref{skein module} to surfaces with boundary.

Let $\Sigma=\Sigma' \setminus \mathcal{P}$ be obtained from a compact oriented surface $\Sigma',$ possibly with boundary, by removing a finite set of points $\mathcal{P}$ from $\Sigma'$ so that for each boundary circle $C_i \subset \partial \Sigma',$ we have $C_i \cap \mathcal{P} \neq \emptyset.$ This guarantees that each boundary component of $\Sigma$ is an interval. Such a surface $\Sigma$ in our context is referred to as a punctured bordered surface \cite{Le18}.

We now consider webs which may have univalent endpoints contained in $\partial \Sigma \times I$ such that each endpoint on the same connected component of $\partial \Sigma \times I$ occur at distinct heights. We require the edge of the web which is adjacent to the endpoint to meet the boundary transversely and with vertical framing. We consider such webs up to isotopy preserving the height order on each boundary component. A \textit{state} for a web $W$ is a function $s: \partial W \rightarrow \{1,2,3\}$. A \textit{stated web} $(W,s)$ is a web $W$ equipped with a state $s.$ We can depict stated webs by taking a generic projection of the stated web $W \subset \Sigma \times I$ onto the surface $\Sigma$ and recording the height order of the endpoints by drawing an arrow along the boundary interval to indicate that endpoints further along the interval occur at greater heights in $\partial \Sigma \times I.$ 

\begin{definition}
The $SL_3$ \textit{stated skein algebra} of $\Sigma,$ also denoted by $\Sq(\Sigma)$,  is the quotient of the $\mathcal{R}\text{-module}$ freely spanned by isotopy classes of stated webs subject to the Kuperberg relations (\ref{2gon})-(\ref{loop}) in addition to the following relations (\ref{switch})-(\ref{spike}) along the boundary. The product for the stated skein algebra is again defined by superposition of stated webs. In the case that $\Sigma$ has no boundary, this definition of $\Sq(\Sigma)$ coincides with Definition \ref{skein module}.
\end{definition}

Along each boundary interval we impose the following stated skein relations for any $a,b \in \{1,2,3\}$ such that $a>b.$

\begin{align}
\begin{tikzpicture}[baseline=.5ex]
	\draw [line width =1.5, ->] (-1/4,0)--(3/4,0);
	\draw (0,1)--(0,0);
	\draw (1/2,1)--(1/2,0);
	\draw [->] (0,1)--(0,1/2);
	\draw [->] (1/2,1)--(1/2,1/2);
	\node [below] at (0,0) {$b$};
	\node [below] at (1/2,-.07) {$a$};
\end{tikzpicture}&=q^{-1} \begin{tikzpicture}[baseline=.5ex]
	\draw [line width =1.5, ->] (-1/4,0)--(3/4,0);
	\draw (0,1)--(0,0);
	\draw (1/2,1)--(1/2,0);
	\draw [->] (0,1)--(0,1/2);
	\draw [->] (1/2,1)--(1/2,1/2);
	\node [below] at (0,-.07) {$a$};
	\node [below] at (1/2,0) {$b$};
\end{tikzpicture}+\begin{tikzpicture}[baseline=.5ex]
	\draw (0,0)--(1/4,1/4);
	\draw [->] (1/4,1/4)--(3/8,1/8);
	\draw (1/2,0)--(1/4,1/4);
	\draw [->] (1/4,1/4)--(1/8,1/8);
	\draw (1/4,1/4)--(1/4,3/4);
	\draw [-<] (1/4,3/4)--(1/4,1/2);
	\draw (1/4,3/4)--(0,1);
	\draw (1/4,3/4)--(1/2,1);
	\draw [->] (0,1)--(1/8,7/8);
	\draw [->] (1/2,1)--(3/8,7/8);
	\draw [line width=1.5, ->] (-1/4,0)--(3/4,0);
	\node [below] at (0,-.07) {$a$};
	\node [below] at (1/2,0) {$b$};
\end{tikzpicture}&\begin{tikzpicture}[baseline=.5ex]
\draw [line width=1.5, ->] (-1/4,0)--(3/4,0);
\draw (0,0)--(1/4,1/2);
\draw (1/2,0)--(1/4,1/2);
\draw (1/4,1/2)--(1/4,1);
\draw [-<] (0,0)--(1/8,1/4);
\draw [-<] (1/2,0)--(3/8,1/4);
\draw [-<] (1/4,1)--(1/4,3/4);
\node [below] at (0,0) {$a$};
\node [below] at (1/2,0) {$a$};
\end{tikzpicture}&=0 \label{switch}\\
\begin{tikzpicture}[baseline=.5ex]
	\draw [line width=1.5, ->] (-1/4,0)--(3/4,0);
	\draw (0,0)--(1/4,1/2);
	\draw (1/2,0)--(1/4,1/2);
	\draw (1/4,1/2)--(1/4,1);
	\draw [-<] (0,0)--(1/8,1/4);
	\draw [-<] (1/2,0)--(3/8,1/4);
	\draw [-<] (1/4,1)--(1/4,3/4);
	\node [below] at (0,-.07) {$a$};
	\node [below] at (1/2,0) {$b$};
\end{tikzpicture}&=q^{-7/6}\begin{tikzpicture}[baseline=.5ex]
	\draw [line width=1.5, ->] (-1/4,0)--(3/4,0);
	\draw [->] (1/4,0)--(1/4,1/2);
	\draw (1/4,0)--(1/4,1);
	\node [below] at (1/4,0) {$a+b-2$};
\end{tikzpicture} & \begin{tikzpicture}[baseline=.5ex]
\draw (0,1/2)--(0,0);
\draw [->] (0,1/2)--(0,1/4);
\draw (1/4,0)--(0,1/2);
\draw [->] (0,1/2)--(1/8,1/4);
\draw (-1/4,0)--(0,1/2);
\draw [->] (0,1/2)--(-1/8,1/4);
\draw [line width=1.5,->] (-1/2,0)--(1/2,0);
\node [below] at (-1/4,0) {$3$};
\node [below] at (0,0) {$2$};
\node [below] at (1/4,0) {$1$};
\end{tikzpicture}&=q^{-7/2}\begin{tikzpicture}[baseline=.5ex]
\draw [line width=1.5,->] (-1/2,0)--(1/2,0);
\end{tikzpicture} \label{spike}
\end{align}

\begin{remark}
This set of skein relations agrees with that in \cite{LS22} and is a simple renormalization of the set of skein relations in \cite{Hig23}. In particular, all results about $\Sq(\Sigma)$ proven in \cite{Hig23} carry over to the version used in this paper. The advantage to these coefficients is that the relations are symmetric with respect to the operation of reversing all arrows on strands. See also \cite{Kim22} for a similar renormalization. 
\end{remark}

A consequence of the defining relations which we will use later is the following.

\begin{proposition}
	The following holds for any $a,b \in \{1,2,3\}$ with $a>b.$
	
	\begin{equation}
		\begin{tikzpicture}[baseline=.5ex]
			\draw [line width=1.5, <-] (-1/4,0)--(3/4,0);
			\draw (0,0)--(1/4,1/2);
			\draw (1/2,0)--(1/4,1/2);
			\draw (1/4,1/2)--(1/4,1);
			\draw [-<] (0,0)--(1/8,1/4);
			\draw [-<] (1/2,0)--(3/8,1/4);
			\draw [-<] (1/4,1)--(1/4,3/4);
			\node [below] at (0,0) {$b$};
			\node [below] at (1/2,-.07) {$a$};
		\end{tikzpicture}=-q^{1/6}\begin{tikzpicture}[baseline=.5ex]
			\draw [line width=1.5, <-] (-1/4,0)--(3/4,0);
			\draw [->] (1/4,0)--(1/4,1/2);
			\draw (1/4,0)--(1/4,1);
			\node [below] at (1/4,0) {$a+b-2$};
		\end{tikzpicture}\label{left spike}
	\end{equation}
\end{proposition}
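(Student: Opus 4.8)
The plan is to deduce \eqref{left spike} from the spike relation \eqref{spike} by trading the heights of the two boundary endpoints attached to the sink vertex, using the height-exchange relation \eqref{switch} and then cleaning up the resulting error term with the Kuperberg relations \eqref{2gon}--\eqref{loop}.

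Working in a half-ball meeting the boundary, the two legs of the sink vertex on the left-hand side of \eqref{left spike} appear, just below the vertex, as a pair of parallel strands running to the boundary and ending at the states $b$ and $a$, with the state-$b$ endpoint at the larger height (which is what the left-pointing boundary arrow encodes). I would apply the height-exchange relation of \eqref{switch} to this pair of strands, in the form valid for a left-pointing boundary arrow, which one gets from \eqref{switch} itself since that relation holds locally in a half-ball. This rewrites the left-hand side of \eqref{left spike} as a power of $q$ times the web in which the two endpoints have swapped heights and states, plus an error term in which the two legs just below the sink vertex have been merged into a small turnback carrying the extra trivalent vertex of the right-hand side of that relation.

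In the leading term, the two endpoints now sit, up to isotopy, exactly as on the left-hand side of \eqref{spike} (with the boundary arrow reversed), so \eqref{spike} collapses it to a scalar times the single strand labeled $a+b-2$. In the error term, the turnback together with the original sink vertex bounds an embedded bigon, which I would evaluate with \eqref{2gon}, splitting off a closed component evaluated by \eqref{loop} if one appears; using the vanishing relation of \eqref{switch} one checks that after this simplification only a further scalar multiple of the single strand labeled $a+b-2$ remains. Summing the two contributions expresses the left-hand side of \eqref{left spike} as a scalar times the single strand labeled $a+b-2$, and it remains only to identify this scalar with $-q^{1/6}$.

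I expect that the final scalar computation is where the real work lies. The contributing factors include a $q^{\pm 1}$ from the exchange in \eqref{switch}, the $-(q+q^{-1})$ from the bigon relation in \eqref{2gon}, the $q^{-7/6}$ from \eqref{spike}, and the fractional powers $q^{\pm 1/6}$ built into the boundary relations, so it is easy to be off by a sign or by a power of $q^{1/6}$. A practical way to keep the bookkeeping honest is to first verify the case $a=2,\ b=1$ explicitly and then observe that the general case is formally the same computation with different boundary labels.
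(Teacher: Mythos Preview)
Your plan relies on applying \eqref{switch} ``in the form valid for a left-pointing boundary arrow, which one gets from \eqref{switch} itself since that relation holds locally in a half-ball.'' This is the gap. The defining relation \eqref{switch} is stated only for the right-pointing convention; a left-arrow analogue is not among the defining relations and does not follow for free from locality. To obtain it you would have to redraw \eqref{switch} in the left-arrow convention, and doing so introduces a crossing in every term (since swapping the page-positions of two endpoints while preserving their heights forces the strands to cross). Resolving that crossing is exactly the work you are trying to avoid. Relatedly, \eqref{switch} swaps the \emph{states} at two fixed heights; it does not swap heights, so it cannot by itself convert the left-arrow spike into the right-arrow spike appearing in \eqref{spike}, contrary to what your description of the ``leading term'' assumes.

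The paper's argument is shorter and bypasses this issue entirely. One performs a height-preserving isotopy to rewrite the left-arrow spike as a right-arrow diagram with a single crossing; then one resolves that crossing using the Kuperberg relation \eqref{crossing} (not \eqref{switch}). This yields $q^{-2/3}$ times the uncrossed right-arrow spike plus $q^{1/3}$ times a web containing an internal bigon attached to the original vertex. The bigon collapses via \eqref{2gon}, contributing $-(q+q^{-1})$ times the same right-arrow spike, so altogether one has
\[
q^{-2/3} - q^{1/3}(q+q^{-1}) = -q^{4/3}
\]
times the right-arrow spike of \eqref{spike}. Applying \eqref{spike} gives $-q^{4/3}\cdot q^{-7/6} = -q^{1/6}$, which is the claimed coefficient. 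No case check on $(a,b)$ is needed, and \eqref{switch} is never used.
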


\begin{proof}
	We compute by first applying a height-preserving isotopy along the boundary and then apply the following relations:
	\begin{align*}
		\begin{tikzpicture}[baseline=.5ex]
			\draw [line width=1.5, <-] (-1/4,0)--(3/4,0);
			\draw (0,0)--(1/4,1/2);
			\draw (1/2,0)--(1/4,1/2);
			\draw (1/4,1/2)--(1/4,1);
			\draw [-<] (0,0)--(1/8,1/4);
			\draw [-<] (1/2,0)--(3/8,1/4);
			\draw [-<] (1/4,1)--(1/4,3/4);
			\node [below] at (0,0) {$b$};
			\node [below] at (1/2,-.07) {$a$};
		\end{tikzpicture}=
		\begin{tikzpicture}[baseline=.5ex]
			\draw [line width=1.5, ->] (-1/4,0)--(3/4,0);
			\node (center) at (1/4,1/4) {};
			\draw (1/4,1/2) to [out=-135, in=135] (1/2,0);
			\draw (1/4,1/2) to [out=-45,in=45] (center);
			\draw (center) -- (0,0);
			\draw (1/4,1/2)--(1/4,1);
			\draw [-<] (1/4,1)--(1/4,3/4);
			\node [below] at (0,-.07) {$a$};
			\node [below] at (1/2,0) {$b$};
		\end{tikzpicture}
		&\overset{(\ref{crossing})}{=}q^{-2/3}
		\begin{tikzpicture}[baseline=.5ex]
			\draw [line width=1.5, ->] (-1/4,0)--(3/4,0);
			\draw (0,0)--(1/4,1/2);
			\draw (1/2,0)--(1/4,1/2);
			\draw (1/4,1/2)--(1/4,1);
			\draw [-<] (0,0)--(1/8,1/4);
			\draw [-<] (1/2,0)--(3/8,1/4);
			\draw [-<] (1/4,1)--(1/4,3/4);
			\node [below] at (0,-.07) {$a$};
			\node [below] at (1/2,0) {$b$};
		\end{tikzpicture}
		+q^{1/3}
		\begin{tikzpicture}[baseline=.5ex]
			\draw [line width=1.5, ->] (-1/4,0)--(3/4,0);
			\draw (0,0) to [out=45, in=-90] (1/4,1/4);
			\draw (1/2,0) to [out=135,in=-90] (1/4,1/4);
			\draw (1/4,1/4) to (1/4,3/8);
			\draw (1/4,3/8) to [out=45, in =-45] (1/4,5/8);
			\draw (1/4,3/8) to [out=135, in =-135] (1/4,5/8);
			\draw (1/4,5/8) to (1/4, 1);
			\draw [-<] (1/4,1) to (1/4, 3/4);
			\node [below] at (0,-.07) {$a$};
			\node [below] at (1/2,0) {$b$};
		\end{tikzpicture}\\
		&\overset{(\ref{2gon})}{=}-q^{4/3}
		\begin{tikzpicture}[baseline=.5ex]
			\draw [line width=1.5, ->] (-1/4,0)--(3/4,0);
			\draw (0,0)--(1/4,1/2);
			\draw (1/2,0)--(1/4,1/2);
			\draw (1/4,1/2)--(1/4,1);
			\draw [-<] (0,0)--(1/8,1/4);
			\draw [-<] (1/2,0)--(3/8,1/4);
			\draw [-<] (1/4,1)--(1/4,3/4);
			\node [below] at (0,-.07) {$a$};
			\node [below] at (1/2,0) {$b$};
		\end{tikzpicture}
		\overset{(\ref{spike})}{=}-q^{1/6}
		\begin{tikzpicture}[baseline=.5ex]
			\draw [line width=1.5, <-] (-1/4,0)--(3/4,0);
			\draw [->] (1/4,0)--(1/4,1/2);
			\draw (1/4,0)--(1/4,1);
			\node [below] at (1/4,0) {$a+b-2$};
		\end{tikzpicture}
	\end{align*}
\end{proof}

\subsection{The splitting homomorphism}

We recall the splitting homomorphism for stated skein algebras of thickened surfaces, which is an algebra map associated to cutting a surface along an ideal arc. We call $c$ an \textit{ideal arc} on $\Sigma$ if it is a proper embedding
\begin{equation*}
c:(-1,1) \rightarrow \text{int}(\Sigma)
\end{equation*} such that its endpoints lie in $\text{cl}(\Sigma) \setminus \Sigma.$ Typically the endpoints of $c$ will be (not necessarily distinct) punctures of $\Sigma.$

Now suppose that $\Sigma$ is a surface, which is not necessarily connected, and has two distinct boundary arcs $a$ and $b.$ Let $\bar{\Sigma}=\Sigma / (a=b)$ be the oriented surface obtained by gluing $\Sigma$ along $a$ and $b$ in a way compatible with the orientation of $\Sigma.$ Denote by $g: \Sigma \rightarrow \bar{\Sigma}$ the gluing map. Then $c=g(a)=g(b)$ is an ideal arc on $\bar{\Sigma}.$ Extend the gluing map of surfaces $g: \Sigma \rightarrow \bar{\Sigma}$ to a gluing map for thickened surfaces in the obvious way, still denoted by $g: \Sigma \times I \rightarrow \bar{\Sigma} \times I.$

We define the splitting homomorphism $\Delta_c: \Sq(\bar{\Sigma}) \rightarrow \Sq(\Sigma)$ in the following manner on a stated web $W \subset \bar{\Sigma} \times I.$ Isotope the web $W$ so that $W \cap (c \times I)$ contains finitely many points where an edge of $W$ meets $c \times I$ transversely with vertical framing and so that the points of $W$ intersecting $c \times I$ occur at distinct heights. Let $W'$ be a lift of $W$ under the gluing so that $g(W')=W.$ For every endpoint $x' \in \partial W'$ for which $g(x') \in \partial W,$ the endpoint $x'$ inherits a state from $W$. For the endpoints of $W'$ created by cutting $W,$ we will sum over so-called admissible states. We say that $s'$ is an \textit{admissible state} of $W'$ if $s'(x)=s(g(x))$ for every $x \in g^{-1}(\partial W)$ and if for any $y,z \in g^{-1}(W \cap (c \times I))$ such that $g(y)=g(z)$ we have $s'(y)=s'(z).$

We then define the image of the splitting homomorphism $\Delta_c$ on the stated web $(W,s)$ to be

\begin{equation}
\Delta_c(W,s)=\sum_{\text{admissible} s'} [(W',s')]
\end{equation}

\begin{theorem}\label{splitting}
The following are satisfied by $\Delta_c.$
\begin{enumerate}
	\item The description of $\Delta_c$ above linearly extends to an algebra homomorphism \begin{equation*}\Delta_c: \Sq(\bar{\Sigma})\rightarrow \Sq(\Sigma).\end{equation*}
	\item For any ideal arc $c$ on a punctured bordered surface $\bar{\Sigma},$ the homomorphism $\Delta_c$ is injective. 
	\item If $c_1$ and $c_2$ are ideal arcs on $\bar{\Sigma}$ with disjoint interiors, then successive splittings can be applied in any order: \begin{equation*}\Delta_{c_1} \circ \Delta_{c_2}=\Delta_{c_2} \circ \Delta_{c_1}.\end{equation*}
	\item Suppose that $a$ and $b$ are the boundary arcs on $\Sigma$ satisfying $g(a)=g(b)=c$ under the gluing map. The image of $\Delta_c$ is equal to
	\begin{equation*}
	\textrm{im}(\Delta_c)=\textrm{ker}(\Delta_a-\tau \circ \Delta_b),
	\end{equation*} where $\tau$ is the tensor flip map.
\end{enumerate}	
\end{theorem}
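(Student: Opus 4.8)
The plan is to prove the four items in order. Items (1), (3), and the easy half of (4) are bookkeeping about how cutting interacts with stacking, isotopy and state-sums; item (2) — and, to a lesser extent, the reverse inclusion in (4) — is where the substantive work lies. (For $SL_3$ these assertions are established in \cite{Hig23}; below I indicate the shape of the argument, which parallels the higher-rank treatment of \cite{LS22}.)

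For item (1), multiplicativity is the most transparent. Given stated webs $W_1,W_2$ on $\bar{\Sigma}$, the product $[W_1][W_2]$ is the superposition with $W_1$ pushed into $\bar{\Sigma}\times(0,1)$ and $W_2$ into $\bar{\Sigma}\times(-1,0)$, so every intersection of $W_1\cup W_2$ with $c\times I$ coming from $W_1$ lies above every one coming from $W_2$; cutting therefore yields the superposition of the cut webs, and a state on the cut of $W_1\cup W_2$ is admissible exactly when it restricts to admissible states of the two cuts separately, giving $\Delta_c([W_1][W_2])=\Delta_c([W_1])\Delta_c([W_2])$. Well-definedness requires that $\Delta_c$ kills each defining relation and is independent of the auxiliary isotopy making $W$ transverse to $c\times I$. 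Each Kuperberg relation (\ref{2gon})--(\ref{loop}) is supported in a ball that may be pushed off $c\times I$, and each boundary relation (\ref{switch})--(\ref{spike}) is supported near a boundary arc of $\bar{\Sigma}$, disjoint from the interior arc $c$; in both cases the relation lifts verbatim and the state-sum is unaffected. Independence of the transversality isotopy reduces, by general position, to invariance under two local moves — swapping the heights of two consecutive intersections with $c\times I$, and pushing a strand tangentially across $c\times I$ — each checked by a local skein computation using (\ref{crossing}) and the boundary relations, with the state-sum terms telescoping.

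Item (2), injectivity, is the main obstacle, and I would prove it by a leading-term argument using the web bases of \cite{Hig23}. Fix a basis $\mathcal{B}$ of $\Sq(\bar{\Sigma})$ by stated webs meeting $c\times I$ transversely in the minimal number of points for their isotopy class, at prescribed heights, together with the corresponding reduced-web basis of $\Sq(\Sigma)$. Re-expanding each $\Delta_c(W)$ (for $W\in\mathcal{B}$) in the basis of $\Sq(\Sigma)$, one singles out a distinguished ``leading'' reduced web occurring in $\Delta_c(W)$ but in no $\Delta_c(W')$ with $W'\ne W$; equivalently, the matrix of $\Delta_c$ in these bases is triangular with invertible diagonal for a suitable partial order. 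Linear independence of $\{\Delta_c(W)\}_{W\in\mathcal{B}}$, hence injectivity, follows. The real input here is that minimal transverse position is compatible with the web-reduction algorithm — precisely what is carried over from \cite{Hig23}.

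For item (3), pick a single isotopy of $W$ making it simultaneously transverse to $c_1\times I$ and $c_2\times I$ with all intersections at distinct heights, which is possible since $c_1,c_2$ have disjoint interiors; by item (1) both composites then equal the single state-sum over the web cut along $c_1$ and $c_2$ at once, summed over states matching on both cuts, and the order of cutting is immaterial. For item (4), the inclusion $\mathrm{im}(\Delta_c)\subseteq\ker(\Delta_a-\tau\circ\Delta_b)$ is direct: applying $\Delta_a$ or $\Delta_b$ after $\Delta_c$ cuts $\bar{\Sigma}$ along $c$ and then reads off the states induced on the $a$-side, respectively the $b$-side, of that cut, and these agree after the flip $\tau$ by the very definition of admissibility. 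For the reverse inclusion — the second place real content enters — given $x\in\ker(\Delta_a-\tau\circ\Delta_b)$, expand $x$ in a basis of $\Sq(\Sigma)$ adapted to the arcs $a$ and $b$; the equalizer condition forces the basis coefficients of $x$ to be constant along the matching of $a$- and $b$-endpoints, which is exactly the assertion that $x\in\mathrm{im}(\Delta_c)$, with uniqueness of the preimage supplied by item (2).
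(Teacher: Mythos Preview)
The paper does not prove this theorem in-text; immediately after the statement it simply cites Theorems 4.2, 8.1, and 8.2 of \cite{Hig23} (with the $SL_2$ antecedents in \cite{Le18,KQ19,CL19}). Your sketch is a reasonable outline of what actually happens in \cite{Hig23}: items (1) and (3) are indeed the bookkeeping you describe, and the substantive items (2) and the hard inclusion in (4) go through a triangularity argument against the extended non-elliptic web basis built there from Sikora--Westbury confluence.

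One caveat: you describe your argument as paralleling ``the higher-rank treatment of \cite{LS22}'', but as the paper itself remarks, injectivity of the splitting homomorphism for $n>3$ is \emph{not} established in \cite{LS22} and remains open outside special cases. The injectivity here is genuinely an $SL_3$ result, and the input you flag --- that minimal transverse position is compatible with the web-reduction algorithm --- is precisely the $SL_3$-specific confluence/basis machinery of \cite{Hig23}, not something available in general rank. So your sketch is fine as an indication of shape, but the citation to \cite{Hig23} is doing real work and should not be conflated with \cite{LS22}.
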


The above theorem was proven in Theorems 4.2, 8.1, and 8.2 of \cite{Hig23} for $SL_3,$ and those theorems generalized the results in the $SL_2$ case described in \cite{Le18,KQ19,CL19}. In \cite{LS22}, the splitting homomorphism was defined for $SL_n$ stated skein algebras but the injectivity of the splitting homomorphism has not yet been proven in the higher rank case $n>3$ except for special cases \cite{LS22,Wang23,Wang24}.

\subsection{Stated skein algebras of small surfaces}

The splitting homomorphism from Theorem \ref{splitting} allows us to study the skein algebra of a punctured surface by splitting the surface into smaller pieces. The fundamental building-block surfaces are the monogon $\mathfrak{M},$ the bigon $\mathfrak{B},$ and the triangle $\mathfrak{T},$ which are given by a disk with one, two, or three points removed from its boundary, respectively. They are especially useful to us since their stated skein algebras are described easily in terms of the quantum group $\Oq,$ and consequently they have explicit presentations.

\begin{theorem}(\cite[Theorem 1.3]{Hig23})
\begin{itemize}
	\item $\mathcal{S}_q^{SL_3}(\mathfrak{M})=\mathcal{R}.$
	\item $\mathcal{S}_q^{SL_3}(\mathfrak{B})=\mathcal{O}_q(SL_3),$ as Hopf algebras.
	\item $\mathcal{S}_q^{SL_3}(\mathfrak{T})= \mathcal{O}_q(SL_3) \underset{-}{\otimes} \mathcal{O}_q(SL_3),$ the braided tensor square of $\mathcal{O}_q(SL_3).$
\end{itemize}
\end{theorem}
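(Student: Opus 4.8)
The plan is to treat the bigon $\mathfrak B$ as the central case and to deduce the monogon and the triangle from it together with the splitting homomorphism of Theorem~\ref{splitting}. For the bigon, write its two boundary arcs as a left arc and a right arc, oriented so that larger states sit at greater height, and for $1\le i,j\le 3$ let $x_{ij}\in\Sq(\mathfrak B)$ be the class of the single left-to-right oriented edge whose left endpoint is stated $i$ and whose right endpoint is stated $j$. First I would show the $x_{ij}$ generate $\Sq(\mathfrak B)$ as an $\mathcal R$-algebra: stratify a stated web $W\subset\mathfrak B\times I$ into horizontal layers, run Kuperberg's reduction (relations (\ref{2gon})--(\ref{loop})) inside each layer so that no layer has an elliptic face, and then use the boundary relations (\ref{switch})--(\ref{spike}) and (\ref{left spike}) to push the trivalent vertices of each reduced layer onto a boundary arc and resolve them; a non-elliptic web in the disk with no closed component and no interior trivalent vertex is a disjoint union of arcs joining the two boundary arcs, i.e.\ a product of $x_{ij}$'s, while closed non-elliptic webs in the disk evaluate to scalars. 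Next I would check that the $x_{ij}$ satisfy the defining relations of $\Oq$ from Definition~\ref{Oq}: the quantum matrix relations come from resolving a crossing of two such arcs with the braiding relations (\ref{crossing}), (\ref{loop}) and the height-exchange relation (\ref{switch}), going through the four cases for the relative position of the indices, and the quantum determinant relation is exactly the spike relation (\ref{spike}), which asserts that the totally antisymmetric combination of three parallel strands is trivial. This produces a surjective algebra homomorphism $\Phi\colon\Oq\to\Sq(\mathfrak B)$ with $X_{ij}\mapsto x_{ij}$.

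The crux — and the step I expect to be the main obstacle — is that $\Phi$ is injective, i.e.\ that there are no relations among the $x_{ij}$ beyond those already imposed. I would prove this by a basis comparison: on the $\Oq$ side one has an explicit PBW-type monomial basis in the $X_{ij}$ coming from the quantum matrix relations, while on the skein side one has the Sikora--Westbury non-elliptic web basis for the disk, which has to be upgraded to stated webs and then cut down to a basis of $\Sq(\mathfrak B)$ using the boundary relations. The plan is to filter both sides, say by a lexicographic multidegree in the $x_{ij}$, check that $\Phi$ respects the filtration, and use the confluence of Kuperberg's rewriting on the disk to control the leading terms of products of the $x_{ij}$, so that $\Phi$ carries the PBW basis to a family that is unitriangular with respect to the non-elliptic web basis, hence to a basis. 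This is where the combinatorics of Kuperberg webs and the stated boundary relations must be reconciled with the algebra of quantum minors, and it is the delicate part of the argument.

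The Hopf structure on $\mathfrak B$ comes from gluing: gluing two copies of $\mathfrak B$ along one boundary arc is again a bigon, so the splitting map of Theorem~\ref{splitting} is an algebra map $\Delta_c\colon\Sq(\mathfrak B)\to\Sq(\mathfrak B)\otimes\Sq(\mathfrak B)$, and cutting $x_{ij}$ at the gluing arc while summing over the admissible state $r$ at the cut point gives $\Delta_c(x_{ij})=\sum_{r=1}^3 x_{ir}\otimes x_{rj}$, which is the coproduct of $\Oq$. The counit is realized by capping off a boundary arc, giving $\varepsilon(x_{ij})=\delta_{ij}$, and the antipode by reversing the orientation of a strand and identifying the reversed stated strand with a $2$-by-$2$ quantum minor, reproducing $S(X_{ij})=(-q)^{i-j}A[j\vert i]$; coassociativity and the remaining Hopf axioms are inherited from the corresponding gluing and capping identities. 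Hence $\Phi$ is an isomorphism of Hopf algebras $\Oq\cong\Sq(\mathfrak B)$.

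Finally, for the monogon I would argue directly that every stated web in the ball $\mathfrak M\times I$, all of whose endpoints lie on the single boundary arc, reduces to a scalar multiple of the empty web, by induction on the number of endpoints: after Kuperberg reduction, inspect the two endpoints of lowest height, and use that $\mathfrak M$ is simply connected to see that the edge at the lowest endpoint either reaches the second endpoint as a cap, meets a trivalent vertex, or runs to a higher endpoint — in each of these cases the boundary relations (\ref{switch})--(\ref{spike}), (\ref{left spike}) strictly decrease the complexity. This gives $\Sq(\mathfrak M)=\mathcal R$, consistently with the fact that capping one arc of $\mathfrak B$ realizes the quotient $\Oq/(X_{ij}-\delta_{ij})=\mathcal R$. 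For the triangle, I would again isolate the generators: $\Sq(\mathfrak T)$ should be generated by two families of stated arcs, each family joining a fixed pair of its boundary arcs and hence spanning an embedded copy of $\Oq$, which one checks by cutting $\mathfrak T$ along ideal arcs and applying Theorem~\ref{splitting} together with $\Sq(\mathfrak M)=\mathcal R$ and $\Sq(\mathfrak B)=\Oq$. The only relations between the two families come from sliding a strand of one past a strand of the other, and resolving the resulting crossing by (\ref{crossing}), (\ref{loop}), (\ref{switch}) reproduces exactly the coquasitriangular ($R$-matrix) structure of $\Oq$; a basis count as in the bigon case then shows there are no further relations, yielding $\Sq(\mathfrak T)\cong\Oq\underset{-}{\otimes}\Oq$.
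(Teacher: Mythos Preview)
The paper does not prove this theorem at all: it is stated with a citation to \cite[Theorem~1.3]{Hig23} and no argument is given in the present paper. So there is nothing here to compare your proposal against; the entire proof lives in the earlier reference.

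That said, your outline is broadly the strategy one expects in \cite{Hig23}: exhibit the surjection $\Oq\to\Sq(\mathfrak B)$ by checking the quantum matrix and determinant relations on the stated arcs, prove injectivity by a basis argument (extending the Sikora--Westbury confluence to stated webs), read off the Hopf structure from splitting, and then handle $\mathfrak M$ and $\mathfrak T$ by reduction to $\mathfrak B$. You have correctly flagged the injectivity step as the real content. A couple of places where your sketch is loose: your generation argument (``stratify into horizontal layers and push trivalent vertices to the boundary'') is more delicate than you suggest, since non-elliptic webs in a disk with many boundary points are not just disjoint arcs, and one must systematically use the boundary relations to eliminate sinks and sources near $\partial\mathfrak B$; and for the antipode, the correct geometric source is a symmetry of the bigon (rotation exchanging the two boundary arcs), not merely reversal of strand orientation, though the two are related through the identification of a reversed stated arc with a quantum minor. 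For the triangle, note that there are three families of corner arcs, and part of the argument is that the third is expressible in terms of the other two via the trivalent sink/source relations, which is what collapses $\Sq(\mathfrak T)$ to a braided tensor \emph{square} rather than a threefold product.
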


Often, we will view $\mathfrak{B}$ interchangeably as a disk with 2 points removed from its boundary or as $\mathfrak{B}=(-1,1) \times [0,1].$ It was shown in \cite{Hig23} that the algebra $\Sq(\mathfrak{B})$ has a generating set of single stated arcs oriented from the left side of the bigon to the right, depicted below. The isomorphism $\Sq(\mathfrak{B}) \cong \mathcal{O}_q(SL_3)$ is given by

\begin{equation}\label{bigon}
\begin{tikzpicture}[baseline=3ex]
	\draw [line width=1.5, rounded corners] (0,0)--(-1/2,1/4)--(-1/2,1/2+.1);
	\draw [line width=1.5, rounded corners] (-1/2,1/2)--(-1/2,3/4)--(0,1);
	\draw [line width=1.5, rounded corners]
	(0,0)--(1/2,1/4)--(1/2,1/2+.1);
	\draw [line width=1.5, rounded corners]
	(1/2,1/2)--(1/2,3/4)--(0,1);
	\filldraw [fill=white, draw=black] (0,0) circle [radius=.05];
	\filldraw [fill=white, draw=black] (0,1) circle [radius=.05];
	\draw [->] (-1/2,1/2)--(1/8,1/2);
	\draw (0,1/2)--(1/2,1/2);
	\node [left] at (-1/2,1/2) {$i$};
	\node [right] at (1/2,1/2) {$j$};
\end{tikzpicture} \mapsto X_{ij}.
\end{equation}

The coproduct for $\Sq(\mathfrak{B})$ is given by the splitting homomorphism associated to splitting along the ideal arc $c$ connecting the two punctures of $\mathfrak{B}$,

\begin{equation*}
\Delta_c: \Sq(\mathfrak{B}) \rightarrow \Sq(\mathfrak{B}) \otimes \Sq(\mathfrak{B}).
\end{equation*}

\subsection{Frobenius homomorphism for the bigon}\label{Frobenius bigon}

The stated skein algebra $\Sq(\mathfrak{B})$ inherits a Frobenius homomorphism from the Parshall-Wang homomorphism for $\Oq.$ When $q^{1/3}$ is a root of unity of order $N$ coprime to $6$ we have that $q$ and $q^2$ are both roots of unity of order $N$ and the map of Parshall-Wang $F_{\mathcal{O}_q}: \mathcal{O}_1(SL_3) \rightarrow \Oq$ from Theorem \ref{Frobenius Oq} exists. We then see that the corresponding map $F_{\mathfrak{B}}=F_{\mathcal{O}_q}$ given by the identification $\Sq(\mathfrak{B}) \cong \Oq$
\begin{equation*}
F_{\mathfrak{B}}: \Sc(\mathfrak{B}) \rightarrow \Sq(\mathfrak{B})
\end{equation*}
is defined on a simple stated arc appearing on the left side of (\ref{bigon}) by taking the $N\text{-th}$ power of the arc. In \cite[Proposition 12.5]{Hig23} it was further shown that the image of $F_{\mathfrak{B}}$ applied to a simple stated arc with the opposite orientation is also the $N\text{-th}$ power of that arc. Consequently, although the identification of $\Sq(\mathfrak{B})$ with $\Oq$ depends on a choice of left and right sides of $\mathfrak{B},$ the Frobenius homomorphism $F_{\mathfrak{B}}$ is invariant under this choice.

\subsection{Skein algebra of the annulus}

Another important small surface is the annulus $\mathcal{A}$ which we will view interchangeably as either a sphere with two punctures, or as $S^1 \times I.$

\begin{theorem}(\cite[Proposition 1.4]{OY97}) \label{OY}
Let $\mathcal{A}=S^1 \times I$ be the annulus with a choice of orientation of its core $S^1$, which we will call the positive direction. Let $l_+ \in \mathcal{S}_q^{SL_3}(\mathcal{A})$ represent a simple loop traversing the core of the annulus in the positive direction, and let $l_-$ represent the same loop with the opposite orientation. The skein algebra of the annulus is the following 2-variable polynomial algebra:
\begin{equation*}\mathcal{S}_q^{SL_3}(\mathcal{A})=\mathcal{R}[l_+,l_-].
\end{equation*}
\end{theorem}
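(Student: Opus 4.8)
The plan is to prove separately that the monomials $l_+^a l_-^b$ with $a,b\ge 0$ span $\mathcal{S}_q^{SL_3}(\mathcal{A})$ over $\mathcal{R}$ and that they are $\mathcal{R}$-linearly independent.

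For spanning I would use that Kuperberg's relations furnish a confluent rewriting system on oriented surfaces \cite{SW07}, so that $\Sq(\mathcal{A})$ is spanned by the non-elliptic webs on $\mathcal{A}$: those with no contractible loop bounding an empty disk and no disk face with fewer than six sides. The main point is then to show that the non-elliptic webs on $\mathcal{A}$ are precisely the families of disjoint parallel essential loops, each oriented in one of two ways; since $\Sq(\mathcal{A})$ is commutative, these represent exactly the monomials $l_+^a l_-^b$. A contractible simple loop evaluates to $q^2+1+q^{-2}$ by (\ref{loop}), so one may assume every loop component is essential, and disjoint essential simple closed curves on $\mathcal{A}$ are mutually parallel. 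It then remains to rule out trivalent vertices: capping off the two punctures of $\mathcal{A}$, a component $W_0$ carrying a vertex becomes a bipartite trivalent graph on $S^2$, all of whose faces have an even number of sides, and the relations $E=\tfrac{3}{2}V$ and $F=2+\tfrac{V}{2}$ together with $\sum_{\text{faces}}(\#\text{sides})=3V$ force, after allowing the at most two faces meeting a puncture to be small and using that every remaining face of $W_0$ is a genuine face of $W$ hence a $\ge 6$-gon, the contradiction $3V\ge 3V+4$.

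For linear independence I would pass to the bigon. Realizing $\mathcal{A}$ as the twice-punctured sphere and choosing an ideal arc $c$ joining the two punctures, cutting along $c$ produces $\mathfrak{B}$, so Theorem \ref{splitting}(2) gives an injective algebra homomorphism $\Delta_c\colon \Sq(\mathcal{A})\hookrightarrow \Sq(\mathfrak{B})\cong\Oq$. Isotoping $l_+$ to meet $c$ transversally once and summing over the admissible states at the intersection point identifies $\Delta_c(l_+)$ with an invertible scalar times $\sum_i X_{ii}=\sigma_1$; likewise $l_-$ cuts to the orientation-reversed arc, which by \cite[Prop.~12.5]{Hig23} is governed by the dual generators $S(X_{ij})$, giving $\Delta_c(l_-)$ equal to an invertible scalar times $\sum_i S(X_{ii})=\sigma_2$, the sum of the principal $2$-by-$2$ quantum minors. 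By injectivity it suffices to see that the elements $\sigma_1^a\sigma_2^b$ are $\mathcal{R}$-linearly independent in $\Oq$, which follows from a leading-term computation in a standard ordered-monomial basis: in a degree-lexicographic order with $X_{11}>X_{12}>\cdots$, the leading monomial of $\sigma_1$ is $X_{11}$ and that of $\sigma_2$ is $X_{11}X_{22}$, so $\sigma_1^a\sigma_2^b$ has leading term a nonzero scalar times $X_{11}^{a+b}X_{22}^{b}$, and these are pairwise distinct (and never divisible by $X_{11}X_{22}X_{33}$, so not touched by the quantum determinant relation). Alternatively, linear independence is immediate once one quotes that the non-elliptic webs of \cite{SW07} form a basis.

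The step I expect to be the genuine obstacle is the combinatorial classification of non-elliptic annular webs, specifically organizing the Euler-characteristic count for webs with several components, where one must apply it to an innermost trivalent component so that its small faces really are faces of $W$ (and check that loop components nested inside such a face cannot subvert the count). Pinning down the precise powers of $q^{1/6}$ in $\Delta_c(l_\pm)$ is a further minor nuisance, but since rescaling the generators leaves a free polynomial algebra unchanged, it does not affect the conclusion.
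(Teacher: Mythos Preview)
Your proposal is sound. The paper itself does not prove this result; it cites \cite{OY97} and remarks that generation comes from an Euler-characteristic argument adapted from \cite{Kup94}, while algebraic independence is in the appendix of \cite{OY97} or follows more easily from the confluence theory of \cite{SW07}. Your spanning argument is exactly the Euler-characteristic count the paper references, and the multi-component concern you flag is resolved just as you suggest: for an innermost trivalent component, any face not containing a puncture is a disk in $\mathcal{A}$, so any loop nested inside it would be contractible and already removed, making those faces genuine $\ge 6$-gons of $W$. Your primary independence argument, via the injective splitting $\Delta_c\colon \Sq(\mathcal{A})\hookrightarrow\Oq$ and a leading-term check that $\sigma_1,\sigma_2$ are algebraically independent, is a different route from either alternative the paper names; it has the virtue of using the paper's own machinery and of anticipating Proposition~\ref{splitting annulus}, where the identities $\Delta_a(l_+)=\sigma_1$ and $\Delta_a(l_-)=\sigma_2$ are computed on the nose (so no stray scalars appear). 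Your fallback via the \cite{SW07} basis is precisely the shortcut the paper recommends.
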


The loops $l_+$ and $l_-$ are pictured below.

\begin{align*}
l_+&=\begin{tikzpicture}[scale=0.3, baseline = (B.base)]
	\def \linewidth{1.5};
	\def \radius{1.375};
	\def \innerradius{0.25};
	\coordinate (B) at (0,-0.25);
	\node (O) at (0,0) {};
	\draw [<-] (0,1.375) arc [radius=\radius, start angle=90, end angle=465];
	\draw[line width= \linewidth] (O) circle (\innerradius);
	\draw[line width= \linewidth] (O) circle (2.5);
	%\draw[line width = \linewidth] (O) circle(\radius);
\end{tikzpicture}&l_-&=
\begin{tikzpicture}[scale=0.3, baseline = (B.base)]
	\def \linewidth{1.5};
	\def \radius{1.375};
	\def \innerradius{0.25};
	\coordinate (B) at (0,-0.25);
	\node (O) at (0,0) {};
	\draw [->] (0,1.375) arc [radius=\radius, start angle=90, end angle=465];
	\draw[line width= \linewidth] (O) circle (\innerradius);
	\draw[line width= \linewidth] (O) circle (2.5);
	%\draw[line width = \linewidth] (O) circle(\radius);
\end{tikzpicture}
\end{align*}

The paper \cite{OY97} proved that the loops $l_+,l_-$ generate $\mathcal{S}_q^{SL_3}(\mathcal{A})$ by adapting the Euler characteristic argument of \cite{Kup94}. They prove the algebraic independence of the two loops $l_+,l_-$ in the appendix of their paper. An easier proof of the algebraic independence of the loops follows from an application of the confluence theory of \cite{SW07}.

Let $a$ be the ideal arc whose two endpoints are the two punctures of $\mathcal{A}.$ Splitting $\mathcal{A}$ along $a$ produces a bigon $\mathfrak{B}.$ In order to identify $\Sq(\mathfrak{B})$ with $\Oq$ we must choose a left and right side of $\mathfrak{B}.$ We make the choice so that $\Delta_a(l_+)=X_{11}+X_{22}+X_{33}$ after our identification. The diagram below depicts the splitting of $l_+$ under such a choice.

\begin{equation*}
\begin{tikzpicture}[scale=0.3, baseline = (B.base)]
	\def \linewidth{1.5};
	\def \radius{1.375};
	\def \innerradius{0.25};
	\coordinate (B) at (0,-0.25);
	\node (O) at (0,0) {};
	\draw [<-] (0,1.375) arc [radius=\radius, start angle=90, end angle=465];
	\draw[line width= \linewidth] (O) circle (\innerradius);
	\draw[line width= \linewidth] (O) circle (2.5);
	\draw[line width=1.25, dotted] (0,-2.5)--(O);
	%\draw[line width = \linewidth] (O) circle(\radius);
\end{tikzpicture} \overset{\Delta_a}{\mapsto}  \sum_{i=1}^3
\begin{tikzpicture}[baseline=3ex]
	\draw [line width=1.5, rounded corners] (0,0)--(-1/2,1/4)--(-1/2,1/2+.1);
	\draw [line width=1.5, rounded corners] (-1/2,1/2)--(-1/2,3/4)--(0,1);
	\draw [line width=1.5, rounded corners]
	(0,0)--(1/2,1/4)--(1/2,1/2+.1);
	\draw [line width=1.5, rounded corners]
	(1/2,1/2)--(1/2,3/4)--(0,1);
	\filldraw [fill=white, draw=black] (0,0) circle [radius=.05];
	\filldraw [fill=white, draw=black] (0,1) circle [radius=.05];
	\draw [->] (-1/2,1/2)--(1/8,1/2);
	\draw (0,1/2)--(1/2,1/2);
	\node [left] at (-1/2,1/2) {$i$};
	\node [right] at (1/2,1/2) {$i$};
\end{tikzpicture}
\end{equation*}

\begin{proposition}\label{splitting annulus}
Applying the splitting map $\Delta_a: \mathcal{S}_q^{SL_3}(\mathcal{A}) \rightarrow \mathcal{S}_q^{SL_3}(\mathfrak{B}) \cong \mathcal{O}_q(SL_3)$ to the loops $l_+,l_-$ yields the following results.
\begin{align*}
	\Delta_a(l_+)=\sigma_1:=&X_{11}+X_{22}+X_{33} \in \mathcal{O}_q(SL_3)\\
	\Delta_a(l_-)=\sigma_2:=&X_{11}X_{22}-qX_{12}X_{21}\\
	&+X_{22}X_{33}-qX_{23}X_{32}\\
	&+X_{11}X_{33}-qX_{13}X_{31} \in \mathcal{O}_q(SL_3),
\end{align*}under our identification of $\mathcal{S}_q^{SL_3}(\mathfrak{B})$ with $\mathcal{O}_q(SL_3).$
\end{proposition}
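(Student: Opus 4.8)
The plan is to compute $\Delta_a(l_+)$ and $\Delta_a(l_-)$ directly from the definition of the splitting homomorphism, using the explicit picture of the annulus cut along $a$ together with the boundary relations for the bigon. The loop $l_+$ meets $a\times I$ in a single point, so $\Delta_a(l_+)$ is a sum over the three admissible states of a single stated arc running across the bigon from the left boundary to the right boundary, both endpoints labelled by the same index $i$. Under the identification of $\Sq(\mathfrak{B})$ with $\Oq$ from (\ref{bigon}), each such arc is $X_{ii}$, so $\Delta_a(l_+)=\sum_{i=1}^3 X_{ii}=\sigma_1$ by construction of the left/right labelling of $\mathfrak{B}$. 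For $l_-$ the same picture holds but with the arc oriented from right to left; I would first use $\Sq(\mathfrak{B})\cong\Oq$ to express a right-to-left stated arc with endpoints $i$ on the left and $j$ on the right in terms of the generators $X_{kl}$. This is exactly the antipode relation: reversing the orientation of a strand corresponds to applying $S$, so a reversed arc with states $i,j$ equals (up to a power of $q$) $S(X_{ji})=(-q)^{j-i}A[i\vert j]$, the complementary quantum cofactor. Summing over the single admissible state $i=j$ then gives $\Delta_a(l_-)=\sum_{i=1}^3 (\text{cofactor}_{ii})$, which is the sum of the $2\times 2$ principal quantum minors $A[i\vert i]$, i.e.\ $\sigma_2$.

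The key steps, in order, are: (1) isotope $l_\pm$ so that it meets $c\times I$ transversely in one point and identify the two lifted endpoints; (2) for $l_+$, read off that the lift is the stated arc of (\ref{bigon}) with equal states and sum to get $\sigma_1$; (3) for $l_-$, identify the orientation-reversed stated arc with the appropriate quantum cofactor of $X$ inside $\Oq$ — this is where I would invoke the skein-theoretic incarnation of the antipode, or equivalently directly resolve the reversed arc using the boundary relations (\ref{switch})–(\ref{spike}) and (\ref{left spike}) together with the bigon relations, exactly as in the proof of (\ref{left spike}); (4) sum the three cofactors and match the result, with its signs and the $-q$ coefficients, against the stated formula for $\sigma_2$.

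The main obstacle is step (3): pinning down the precise $q$-power and sign with which an orientation-reversed stated arc equals a quantum minor of $X$. One has to be careful that the renormalized boundary relations of this paper (which differ from \cite{Hig23} by the renormalization noted in the Remark) produce exactly the coefficient $-q$ in front of each $X_{1i}X_{i1}$-type term in $\sigma_2$, rather than some other power of $q^{1/6}$. Concretely, a right-to-left arc labelled $i,j$ should be rewritten by capping it off against the $2\to 1$ and $1\to 2$ boundary vertices and using the trivalent relations to turn it into a linear combination of products $X_{kl}X_{mn}$; carrying the coefficients through (\ref{spike}) and (\ref{left spike}) is the delicate bookkeeping. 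Once the coefficient is fixed for one arc, the formula for $\Delta_a(l_-)$ follows by summing over the diagonal, and the antipode description $S(X_{ji})=(-q)^{j-i}A[j\vert i]$ from Definition~\ref{Oq} serves as a useful cross-check that the principal ($i=j$) terms assemble into $\sigma_2$ with the correct signs.
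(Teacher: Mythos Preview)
Your proposal is essentially the paper's proof: split $l_\pm$ into a single stated arc on the bigon, read off $\sigma_1$ for $l_+$, and for $l_-$ resolve each orientation-reversed stated arc using the boundary relations (\ref{spike}), (\ref{left spike}), and then (\ref{switch}) to obtain a principal $2\times 2$ quantum minor, summing to $\sigma_2$.

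One caution about your antipode cross-check: the identification ``reversed arc with states $i,i$ equals $S(X_{ii})$ up to a scalar'' is not correct term by term. The paper's explicit computation shows that the reversed arc with both endpoints labelled $1$ evaluates to $X_{11}X_{22}-qX_{12}X_{21}=A[3\vert 3]$, \emph{not} $S(X_{11})=A[1\vert 1]=X_{22}X_{33}-qX_{23}X_{32}$; in general state $i$ produces the complementary minor $A[4-i\,\vert\,4-i]$. The diagonal sum over $i$ still yields $\sigma_2$, so your conclusion is unaffected, but the antipode heuristic is off by this reindexing and should not be used as more than a sanity check. The direct skein computation you outline in step~(3), tracking coefficients through (\ref{spike}) and (\ref{left spike}), is exactly what the paper does and is the reliable route.
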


\begin{proof}
The first equality follows directly from the choice we made for the left and right side of $\mathfrak{B}.$ For the second equality, once we apply the splitting map, we get a sum of 3 diagrams, each consisting of a stated strand with orientation opposite to our chosen generators of $\Oq.$

To rewrite each diagram in terms of the generators of $\mathcal{O}_q(SL_3)$ we apply some relations in the following way. After applying relation (\ref{spike}) to the right boundary edge of the bigon and then applying relation (\ref{left spike}) to the left boundary edge of the bigon we obtain the following.

\begin{align*}
\begin{tikzpicture}[baseline=3ex]
\draw [line width=1.5, rounded corners] (0,0)--(-1/2,1/4)--(-1/2,1/2+.1);
\draw [line width=1.5, rounded corners] (-1/2,1/2)--(-1/2,3/4)--(0,1);
\draw [line width=1.5, rounded corners]
(0,0)--(1/2,1/4)--(1/2,1/2+.1);
\draw [line width=1.5, rounded corners]
(1/2,1/2)--(1/2,3/4)--(0,1);
\filldraw [fill=white, draw=black] (0,0) circle [radius=.05];
\filldraw [fill=white, draw=black] (0,1) circle [radius=.05];
\draw [-<] (-1/2,1/2)--(1/8,1/2);
\draw (0,1/2)--(1/2,1/2);
\node [left] at (-1/2,1/2) {$1$};
\node [right] at (1/2,1/2) {$1$};
\end{tikzpicture}
&\overset{(\ref{spike})}{=}q^{7/6}
\begin{tikzpicture}[baseline=3ex]
%bigon
\draw [line width=1.5, rounded corners,->] (0,0)--(-1/2,1/4)--(-1/2,1/2+.1);
\draw [line width=1.5, rounded corners] (-1/2,1/2)--(-1/2,3/4)--(0,1);
\draw [line width=1.5, rounded corners,->] (0,0)--(1/2,1/4)--(1/2,1/2+.1);
\draw [line width=1.5, rounded corners] (1/2,1/2)--(1/2,3/4)--(0,1);
%strands
\draw (-1/2,1/2)--(0,1/2);
\draw [-<] (-1/2,1/2)--(-1/4,1/2);
\draw (0,1/2)--(1/2,3/4);
\draw [->] (0,1/2)--(1/4,5/8);
\draw(0,1/2)--(1/2,1/4);
\draw[->] (0,1/2)--(1/4,3/8);
%punctures
\filldraw [fill=white, draw=black] (0,0) circle [radius=.05];
\filldraw [fill=white, draw=black] (0,1) circle [radius=.05];
%labels
\node [left] at (-1/2-.1,1/2) {$1$};
\node [right] at (1/2,3/4) {$1$};
\node [right] at (1/2,1/4) {$2$};
\end{tikzpicture}
\overset{(\ref{left spike})}{=}-q
\begin{tikzpicture}[baseline=3ex]
%bigon
\draw [line width=1.5, rounded corners,->] (0,0)--(-1/2,1/4)--(-1/2,1/2+.1);
\draw [line width=1.5, rounded corners] (-1/2,1/2)--(-1/2,3/4)--(0,1);
\draw [line width=1.5, rounded corners,->] (0,0)--(1/2,1/4)--(1/2,1/2+.1);
\draw [line width=1.5, rounded corners] (1/2,1/2)--(1/2,3/4)--(0,1);
%strands
\draw (-1/2,3/4)--(-1/4,1/2);
\draw (-1/2,1/4)--(-1/4,1/2);
\draw (1/2,3/4)--(1/4,1/2);
\draw (1/2,1/4)--(1/4,1/2);
\draw (-1/4,1/2)--(1/4,1/2);
\draw [-<](-1/4,1/2)--(0,1/2);
%punctures
\filldraw [fill=white, draw=black] (0,0) circle [radius=.05];
\filldraw [fill=white, draw=black] (0,1) circle [radius=.05];
%labels
\node [left] at (-1/2,3/4) {$1$};
\node [left] at (-1/2,1/4) {$2$};
\node [right] at (1/2,3/4) {$1$};
\node [right] at (1/2, 1/4) {$2$};
\end{tikzpicture}\\
&\overset{(\ref{switch})}{=}-q\bigg(-q^{-1}
\begin{tikzpicture}[baseline=3ex]
%bigon
\draw [line width=1.5, rounded corners,->] (0,0)--(-1/2,1/4)--(-1/2,1/2+.1);
\draw [line width=1.5, rounded corners] (-1/2,1/2)--(-1/2,3/4)--(0,1);
\draw [line width=1.5, rounded corners,->] (0,0)--(1/2,1/4)--(1/2,1/2+.1);
\draw [line width=1.5, rounded corners] (1/2,1/2)--(1/2,3/4)--(0,1);
%strands
\draw (-1/2,3/4)--(1/2,3/4);
\draw [->] (-1/2,3/4)--(0,3/4);
\draw (-1/2,1/4)--(1/2,1/4);
\draw [->] (-1/2,1/4)--(0,1/4);
%punctures
\filldraw [fill=white, draw=black] (0,0) circle [radius=.05];
\filldraw [fill=white, draw=black] (0,1) circle [radius=.05];
%labels
\node [left] at (-1/2,3/4) {$1$};
\node [left] at (-1/2,1/4) {$2$};
\node [right] at (1/2,3/4) {$1$};
\node [right] at (1/2, 1/4) {$2$};
\end{tikzpicture}
+
\begin{tikzpicture}[baseline=3ex]
%bigon
\draw [line width=1.5, rounded corners,->] (0,0)--(-1/2,1/4)--(-1/2,1/2+.1);
\draw [line width=1.5, rounded corners] (-1/2,1/2)--(-1/2,3/4)--(0,1);
\draw [line width=1.5, rounded corners,->] (0,0)--(1/2,1/4)--(1/2,1/2+.1);
\draw [line width=1.5, rounded corners] (1/2,1/2)--(1/2,3/4)--(0,1);
%strands
\draw (-1/2,3/4)--(1/2,3/4);
\draw [->] (-1/2,3/4)--(0,3/4);
\draw (-1/2,1/4)--(1/2,1/4);
\draw [->] (-1/2,1/4)--(0,1/4);
%punctures
\filldraw [fill=white, draw=black] (0,0) circle [radius=.05];
\filldraw [fill=white, draw=black] (0,1) circle [radius=.05];
%labels
\node [left] at (-1/2,3/4) {$1$};
\node [left] at (-1/2,1/4) {$2$};
\node [right] at (1/2,3/4) {$2$};
\node [right] at (1/2, 1/4) {$1$};
\end{tikzpicture}\bigg)\\
&=X_{11}X_{22}-qX_{12}X_{21},	
\end{align*}

where final equality comes from our identification of $\Sq(\mathfrak{B}) \cong \mathcal{O}_q(SL_3)$ given by (\ref{bigon}).

Similarly, we compute that

\begin{align*}
\begin{tikzpicture}[baseline=3ex]
\draw [line width=1.5, rounded corners] (0,0)--(-1/2,1/4)--(-1/2,1/2+.1);
\draw [line width=1.5, rounded corners] (-1/2,1/2)--(-1/2,3/4)--(0,1);
\draw [line width=1.5, rounded corners]
(0,0)--(1/2,1/4)--(1/2,1/2+.1);
\draw [line width=1.5, rounded corners]
(1/2,1/2)--(1/2,3/4)--(0,1);
\filldraw [fill=white, draw=black] (0,0) circle [radius=.05];
\filldraw [fill=white, draw=black] (0,1) circle [radius=.05];
\draw [-<] (-1/2,1/2)--(1/8,1/2);
\draw (0,1/2)--(1/2,1/2);
\node [left] at (-1/2,1/2) {$2$};
\node [right] at (1/2,1/2) {$2$};
\end{tikzpicture}&=X_{11}X_{33}-qX_{13}X_{31} \text{ and }\\
\begin{tikzpicture}[baseline=3ex]
\draw [line width=1.5, rounded corners] (0,0)--(-1/2,1/4)--(-1/2,1/2+.1);
\draw [line width=1.5, rounded corners] (-1/2,1/2)--(-1/2,3/4)--(0,1);
\draw [line width=1.5, rounded corners]
(0,0)--(1/2,1/4)--(1/2,1/2+.1);
\draw [line width=1.5, rounded corners]
(1/2,1/2)--(1/2,3/4)--(0,1);
\filldraw [fill=white, draw=black] (0,0) circle [radius=.05];
\filldraw [fill=white, draw=black] (0,1) circle [radius=.05];
\draw [-<] (-1/2,1/2)--(1/8,1/2);
\draw (0,1/2)--(1/2,1/2);
\node [left] at (-1/2,1/2) {$3$};
\node [right] at (1/2,1/2) {$3$};
\end{tikzpicture}&=X_{22}X_{33}-qX_{23}X_{31}.
\end{align*}

We conclude that

\begin{equation*}
\Delta(l_-)=\sum_{i=1}^3 \begin{tikzpicture}[baseline=3ex]
\draw [line width=1.5, rounded corners] (0,0)--(-1/2,1/4)--(-1/2,1/2+.1);
\draw [line width=1.5, rounded corners] (-1/2,1/2)--(-1/2,3/4)--(0,1);
\draw [line width=1.5, rounded corners]
(0,0)--(1/2,1/4)--(1/2,1/2+.1);
\draw [line width=1.5, rounded corners]
(1/2,1/2)--(1/2,3/4)--(0,1);
\filldraw [fill=white, draw=black] (0,0) circle [radius=.05];
\filldraw [fill=white, draw=black] (0,1) circle [radius=.05];
\draw [-<] (-1/2,1/2)--(1/8,1/2);
\draw (0,1/2)--(1/2,1/2);
\node [left] at (-1/2,1/2) {$i$};
\node [right] at (1/2,1/2) {$i$};
\end{tikzpicture}=\sigma_2,
\end{equation*}

and thus $\Delta_a(l_-)$ becomes the sum of the principle quantum $2\text{-by-}2$ minors in $\mathcal{O}_q(SL_3),$ as claimed.

\end{proof}

\section{The Frobenius map for $SL_3$ skein modules}

The goal of this section is to show that the threading operation from \cite{BH23} respects the skein relations and defines a Frobenius homomorphism for $SL_3$ skein modules. The key step will be to connect the threading operation to the Frobenius homomorphism defined in \cite{Hig23} in the case of punctured surfaces.

\subsection{Threading and power sum polynomials}

The threading operations described in \cite{BH23} were in the context of the CKM webs of Cautis-Kamnitzer-Morrison \cite{CKM14}, whose edges carry integer labels in $\{1,2\}$ in addition to orientations. These webs are well known to be equivalent to Kuperberg webs in the $SL_3$ case. To translate from the setting of $SL_3$ CKM webs to Kuperberg webs, one reverses the orientation of 2-labeled edges, then forgets edge labels and removes tags. The following describes the threading operation for links in the $SL_3$ setting of Kuperberg webs.

\begin{definition}
	Suppose that $K$ is an oriented, framed knot in an oriented 3-manifold and that $e_1^ie_2^j \in \mathcal{R}[e_1,e_2]$ is a monomial in 2 variables. The threading of $e_1^ie_2^j$ along $K$ is denoted by $K^{[e_1^ie_2^j]}$ and is given by taking $i+j$ parallel copies of $K$ in the direction of the framing, and giving $i$ of these the same orientation as $K$ and giving $j$ of these the opposite orientation of $K.$ Extending this construction linearly, the threading of a polynomial $P=\sum_{i,j} a_{ij}e_1^ie_2^j \in \mathcal{R}[e_1,e_2]$ along $K$ is denoted by $K^{[P]}$ and is given by $K^{[P]}=\sum a_{ij}K^{[e_1^ie_2^j]}.$
	
	The threading of polynomials $P_1,...,P_k \in \mathcal{R}[e_1,e_2]$ along an oriented, framed link consisting of $k$ components $K_1,...,K_k$ is defined by threading each $P_i$ along $K_i$ in the multilinear fashion. 
\end{definition}

\begin{definition}\label{power sum}
	Consider the ring $\Lambda=\mathcal{R}[\lambda_1,\lambda_2,\lambda_3]/(\lambda_1\lambda_2\lambda_3-1).$ Consider the two elements in this ring defined by $E_1=\lambda_1+\lambda_2+\lambda_3$ and $E_2=\lambda_1\lambda_2+\lambda_1\lambda_3+\lambda_2\lambda_3.$ Note that the elements $E_1$ and $E_2$ are algebraically independent in $\Lambda.$ We define the power sum polynomial $P^{(N)} \in \mathcal{R}[e_1,e_2]$ to be the unique polynomial in two variables satisfying the property that \begin{equation*}
		P^{(N)}(E_1,E_2)=\lambda_1^N+\lambda_2^N+\lambda_3^N.
	\end{equation*}
\end{definition}

\begin{remark}
	The polynomial $P^{(N)}$ coincides with the polynomial described by the slightly different notation $\hat{P}_3^{(N,1)}$ in \cite{BH23}. The second power elementary polynomial $\hat{P}_3^{(N,2)}(e_1,e_2)$ from \cite{BH23} corresponds to swapping $e_2$ with $e_1$ in $P^{(N)}$ to obtain $P^{(N)}(e_2,e_1)$. This symmetry in the two power elementary polynomials arises from the fact that the two fundamental representations for $SL_3$ are dual to each other.
\end{remark}

For a concrete construction of $P^{(N)},$ it can be defined using the following recursive definition, which shows each polynomial has integer coefficients:

\begin{align*}
	P^{(0)}&=3\\
	P^{(1)}&=e_1\\
	P^{(2)}&=e_1^2-2e_2\\
	P^{(N)}&=e_1P^{(N-1)}-e_2P^{(N-2)}+P^{(N-3)}, \text{ for } N\geq 3.
\end{align*}

Recall that in the setting of $SL_3$ skein modules, the main theorem of \cite{BH23} showed that threadings of power sum polynomials along links produce \textit{transparent} elements in the skein module $\Sq(M).$ This means that the threading operation of a power sum polynomial of a knot respects any isotopy of the knot in $M$ which is allowed to pass through any other skein in $M$.

Because of relations $(\ref{2gon})$ and $(\ref{crossing}),$ the set of oriented links in an oriented 3-manifold $M$ forms a spanning set for $\Sq(M).$ We will work towards proving the following theorem, which says that the threading operation along links respects the skein relations and so defines a skein module homomorphism.

\begin{theorem}\label{main theorem}
	Suppose that $M$ is an oriented 3-manifold. When $q^{1/3} \in \mathcal{R}$ is a root of unity of order $N$ coprime to $6,$ then the threading operation
	\begin{equation*}
	L \mapsto L^{[P^{(N)}]}
	\end{equation*}
	that sends an oriented link $L$ in $M$ to the result of threading $P^{(N)}$ along each component of the link defines a homomorphism of skein modules $\mathcal{S}_1^{SL_3}(M) \rightarrow \mathcal{S}_q^{SL_3}(M).$ Furthermore, in the particular case when $M=\Sigma \times I$ is a thickened oriented surface, then the threading operation defines an algebra homomorphism of skein algebras $\mathcal{S}_1^{SL_3}(\Sigma) \rightarrow Z(\mathcal{S}_q^{SL_3}(\Sigma)).$ In case the surface has at least one puncture on each connected component, the resulting algebra map coincides with the Frobenius map defined in \cite{Hig23} and is thus known to be injective.
\end{theorem}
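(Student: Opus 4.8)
The plan is to reduce to a thickened punctured surface by functoriality, then to a single knot by multiplicativity, and finally to the computation on the annulus coming from Theorem~\ref{cancellations} and Proposition~\ref{splitting annulus}. A relation $\sum_i a_i[L_i]=0$ in $\Sc(M)$ among oriented links is witnessed by finitely many isotopies and applications of the skein relations of Definition~\ref{skein module}, each supported in an embedded ball; all of this takes place in a regular neighborhood $H$ of the union of the links, these finitely many balls, and arcs joining them, and since that union deformation retracts onto a finite graph, $H$ is a handlebody, hence diffeomorphic to $\Sigma\times I$ for a compact surface $\Sigma$ with non-empty boundary. Placing a marked point on each boundary circle makes $\Sigma$ a punctured bordered surface. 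The same derivation establishes $\sum_i a_i[L_i]=0$ already in $\Sc(H)$; so if threading is known to equal the Frobenius homomorphism $F_\Sigma$ of \cite{Hig23} on $\Sc(H)=\Sc(\Sigma)$, then $\sum_i a_i[L_i^{[P^{(N)}]}]=F_\Sigma\bigl(\sum_i a_i[L_i]\bigr)=0$ in $\Sq(H)$, and pushing forward along the inclusion $j\colon H\hookrightarrow M$ (under which threading is natural, since parallel copies are taken along the framing and $j$ preserves framings) gives the relation in $\Sq(M)$. Thus it suffices to prove: for every punctured bordered surface $\Sigma$ and every oriented framed link $L\subset\Sigma\times I$, one has $F_\Sigma([L])=[L^{[P^{(N)}]}]$ in $\Sq(\Sigma)$, where $F_\Sigma$ exists and is injective whenever $q^{1/3}$ is a root of unity of order $N$ coprime to $6$. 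Granting this, the threading assignment descends to a module homomorphism $\Sc(M)\to\Sq(M)$; it is an algebra map when $M=\Sigma\times I$ because $F_\Sigma$ is; its image is central because threaded links are transparent by \cite{BH23} and transparent elements commute with every skein under the superposition product; and it coincides with $F_\Sigma$, hence is injective, when each component of $\Sigma$ has a puncture.

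\textbf{Reduction to a single knot.} Write $L=\gamma_1\sqcup\dots\sqcup\gamma_k$. At $q=1$ the two crossing relations of Definition~\ref{skein module} have the same right-hand side, so crossing changes are consequences of the skein relations; unlinking the components then shows $[L]=[\gamma_1]\cdots[\gamma_k]$ in $\Sc(\Sigma)$. On the quantum side, each $\gamma_i^{[P^{(N)}]}$ is transparent by \cite{BH23} and can therefore be isotoped past the other $\gamma_j^{[P^{(N)}]}$, giving $[L^{[P^{(N)}]}]=[\gamma_1^{[P^{(N)}]}]\cdots[\gamma_k^{[P^{(N)}]}]$. Since $F_\Sigma$ is an algebra homomorphism, it remains to prove $F_\Sigma([\gamma])=[\gamma^{[P^{(N)}]}]$ for an arbitrary knot $\gamma\subset\Sigma\times I$.

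\textbf{The annulus and the knot case.} For the annulus, Theorem~\ref{OY} gives $\Sc(\mathcal{A})=\mathcal{R}[l_+,l_-]$, so there is a well-defined algebra homomorphism $F_{\mathcal A}\colon\Sc(\mathcal{A})\to\Sq(\mathcal{A})$ with $F_{\mathcal A}(l_+)=P^{(N)}(l_+,l_-)$ and $F_{\mathcal A}(l_-)=P^{(N)}(l_-,l_+)$; since threading a monomial $e_1^ie_2^j$ along $l_\pm$ produces $i$ parallel copies in the given orientation and $j$ in the opposite, $F_{\mathcal A}([W])=[W^{[P^{(N)}]}]$ for every oriented link $W\subset\mathcal{A}\times I$. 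Applying the injective splitting map $\Delta_a$ and Proposition~\ref{splitting annulus} gives $\Delta_a F_{\mathcal A}(l_+)=P^{(N)}(\sigma_1,\sigma_2)$ and $\Delta_a F_{\mathcal A}(l_-)=P^{(N)}(\sigma_2,\sigma_1)$ in $\Oq$; while, since $\mathcal{O}_1(SL_3)$ is commutative and $F_{\mathcal{O}_q}$ of Theorem~\ref{Frobenius Oq} sends $X_{ij}\mapsto X_{ij}^N$, we have $F_{\mathcal{O}_q}(\sigma_1)=X_{11}^N+X_{22}^N+X_{33}^N$ and $F_{\mathcal{O}_q}(\sigma_2)=\sum_{i<j}(X_{ii}^NX_{jj}^N-X_{ij}^NX_{ji}^N)$. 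By Theorem~\ref{cancellations} these pairs coincide, so $\Delta_a\circ F_{\mathcal A}=F_{\mathcal{O}_q}\circ\Delta_a$; as $\Delta_a$ is injective and $F_{\mathfrak B}=F_{\mathcal{O}_q}$ under the identification $\Sq(\mathfrak B)\cong\Oq$, the map $F_{\mathcal A}$ is exactly the restriction of $F_{\mathfrak B}$ to $\mathrm{im}(\Delta_a)$, i.e.\ the Frobenius map of \cite{Hig23} for $\mathcal{A}$. Now let $\gamma\subset\Sigma\times I$ be a knot. Using its framing to trivialize the normal bundle, a tubular neighborhood of $\gamma$ is a solid torus which we identify with $\mathcal{A}\times I$ so that the core loop of $\mathcal{A}$, with its surface framing, maps to $\gamma$; this gives an orientation-preserving embedding $i_\gamma\colon\mathcal{A}\times I\hookrightarrow\Sigma\times I$ with $i_{\gamma*}([l_+])=[\gamma]$. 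Lemma~\ref{cutting and gluing annulus}, which records the compatibility of such an annulus embedding with cutting $\Sigma$ along a suitably chosen ideal arc, yields the commuting square $F_\Sigma\circ i_{\gamma*}=i_{\gamma*}\circ F_{\mathcal A}$ on $\Sc(\mathcal{A})$. Therefore $F_\Sigma([\gamma])=i_{\gamma*}F_{\mathcal A}([l_+])=i_{\gamma*}([l_+^{[P^{(N)}]}])=[\gamma^{[P^{(N)}]}]$, the last equality because threading commutes with $i_\gamma$.

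\textbf{Main obstacle.} The technical heart is Lemma~\ref{cutting and gluing annulus}: the naturality of the Frobenius homomorphism with respect to the embedding of an annular neighborhood of a knot into a thickened punctured surface. Since $F_\Sigma$ is constructed in \cite{Hig23} from ideal triangulations and the splitting homomorphisms of Theorem~\ref{splitting}, proving this requires positioning the annulus embedding compatibly with an ideal arc $c$ of $\Sigma$, relating $\Delta_c$ to the annulus splitting $\Delta_a$, and chasing the resulting diagram using the injectivity of $\Delta_c$; knots that are knotted inside $\Sigma\times I$ or whose homotopy class on $\Sigma$ is non-simple call for first using the transparency of \cite{BH23} to move $\gamma$ to a convenient position before applying the lemma. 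Some care with framings is also needed, both in the transparency statement and in matching the framing of $\gamma$ with the surface framing of $l_+$ under $i_\gamma$.
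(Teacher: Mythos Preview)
Your overall architecture matches the paper's: reduce 3-manifolds to thickened punctured surfaces by functoriality, reduce links to knots via crossing changes at $q=1$ plus transparency from \cite{BH23}, and handle a single knot by embedding an annular neighborhood and invoking the annulus computation. Your handlebody reduction is a tidy variant of the paper's argument (the paper instead uses the framing surface $\Sigma_W$ carried by each web, and treats closed surfaces by a separate ``filling in a puncture'' step, Theorem~\ref{Frobenius surface}); both are fine.

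The genuine gap is at the step you correctly flag as the main obstacle. You assert that Lemma~\ref{cutting and gluing annulus} ``yields the commuting square $F_\Sigma\circ i_{\gamma*}=i_{\gamma*}\circ F_{\mathcal A}$,'' but that lemma is only the compatibility of the annulus embedding with the \emph{splitting} maps; it says nothing about the Frobenius maps. In the paper this naturality is the \emph{center} face of a six-face cube (Proposition~\ref{commutative cube}), deduced from the other five. Besides Lemma~\ref{cutting and gluing annulus} (top and bottom), Corollary~\ref{Frobenius annulus} (left), and the triangulation definition of $F_\Sigma$ (right), the chase requires one ingredient you never mention: the description of $F_{\mathfrak T}$ on an arbitrary stated tangle in a triangle as the \emph{framed $N$-th power} of each arc (Proposition~\ref{Frobenius triangle}). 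After splitting $\Sigma$ along a full ideal triangulation and the annulus along the induced arcs, the annulus breaks into bigons embedded in the triangles, and one needs $\bar k_*\circ F_{\mathfrak B}^{\otimes l}=(\otimes_i F_{\mathfrak T_i})\circ\bar k_*$ (the outer face) to close the chase. Your sketch in the ``Main obstacle'' paragraph, using a \emph{single} ideal arc $c$ and the injectivity of $\Delta_c$, does not terminate: splitting along one arc lands in $\Sq(\Sigma')$ for some cut surface $\Sigma'$, not in a tensor product of triangles or bigons where the Frobenius is explicitly known. Once the full triangulation and Proposition~\ref{Frobenius triangle} are in place, the argument works uniformly for any knot diagram on $\Sigma$, so your worry about knots that are knotted in $\Sigma\times I$ is unnecessary and the preliminary unknotting via transparency is not needed.
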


\begin{remark}
	We expect that the threading map is injective in the case of closed surfaces as well, although we imagine it will require the development of new tools to prove this, analogous to those developed in \cite{FKBL19}. On the other hand, we expect that the threading map for 3-manifolds will not always be injective, as was the case for $SL_2$ \cite{CL22}.
\end{remark}

The proof of Theorem \ref{main theorem} will be given by Corollary \ref{Frobenius link}, Theorem \ref{Frobenius surface}, and Theorem \ref{Frobenius manifold}.

We will begin our investigation with the most fundamental case, which is the computation of the Frobenius homomorphism on a simple loop around the annulus. Afterwards, we will spend the rest of this section leveraging this case to give the general case.

\subsection{Miraculous cancellations and the Frobenius image of a loop in the annulus}

Our first goal is to prove the theorem in the case of punctured surfaces. Our first step toward the goal of relating the threading operation from \cite{BH23} to the Frobenius map from \cite{Hig23} will be to compute it in the special case that $\Sigma$ is the 2-punctured sphere or annulus, $\Sigma = \mathcal{A}.$

Recall the Frobenius map for the bigon from Section \ref{Frobenius bigon}. Consider the splitting map $\Delta_a: \mathcal{S}_q^{SL_3}(\mathcal{A}) \rightarrow \mathcal{S}_q^{SL_3}(\mathfrak{B})$ associated to splitting the annulus $\mathcal{A}$ along an ideal arc traveling from one puncture of $\mathcal{A}$ to the other. From Theorem \ref{splitting}, the map $\Delta_a$ fits into an exact sequence. Using that exact sequence, the Frobenius map $F_{\mathcal{A}}$ for the annulus was constructed in \cite[Proposition 12.12]{Hig23} as the map making the following square of algebras commute.

\begin{equation}
	\begin{tikzcd}
		\mathcal{S}_1^{SL_3}(\mathcal{A}) \arrow[r,"\Delta_a"] \arrow[d,dashed, "F_{\mathcal{A}}"] &  \mathcal{S}_1^{SL_3}(\mathfrak{B}) \arrow[d,"F_{\mathfrak{B}}"] \\
		\mathcal{S}_q^{SL_3}(\mathcal{A}) \arrow[r,"\Delta_a"] & \mathcal{S}_q^{SL_3}(\mathfrak{B}) \\
	\end{tikzcd} \label{commutative annulus}
\end{equation}

By Theorem \ref{OY}, the skein algebra of the annulus (for any specialization of $q$) is isomorphic to $\mathcal{R}[l_+,l_-],$ a polynomial algebra in the two simple oriented loops traveling around the core of the annulus. Also, the computation of Proposition \ref{splitting annulus} showed that

\begin{align*}
	\Delta_a(l_+)=\sigma_1:=&X_{11}+X_{22}+X_{33} \in \mathcal{O}_q(SL_3)\\
	\Delta_a(l_-)=\sigma_2:=&X_{11}X_{22}-qX_{12}X_{21}\\
	&+X_{22}X_{33}-qX_{23}X_{32}\\
	&+X_{11}X_{33}-qX_{13}X_{31} \in \mathcal{O}_q(SL_3),
\end{align*}under our identification of $\mathcal{S}_q^{SL_3}(\mathfrak{B})$ with $\mathcal{O}_q(SL_3).$

The following proposition now provides us with an explicit description of the image of $F_{\mathcal{A}}$ on a loop in terms of the threading of the polynomial $P^{(N)}$ along the loop.

\begin{proposition}\label{Frobenius loop}
	When $q^{1/3}$ is a root of unity of order $N$ coprime to $6,$ the image of a simple loop in the annulus under the Frobenius map $F_{\mathcal{A}}$ is equal to the threading by $P^{(N)}$ along the loop:
	
	\begin{align*}
		F_{\mathcal{A}}(l_+)&=P^{(N)}(l_+,l_-)\\
		F_{\mathcal{A}}(l_-)&=P^{(N)}(l_-,l_+). 
	\end{align*}
\end{proposition}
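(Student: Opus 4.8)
The plan is to use the commutative square \eqref{commutative annulus} together with the injectivity of the splitting map $\Delta_a$ (Theorem \ref{splitting}(2)) to reduce the claim to an identity inside $\mathcal{O}_q(SL_3)$. Concretely, since $\Delta_a$ is injective, to prove $F_{\mathcal{A}}(l_+) = P^{(N)}(l_+,l_-)$ it suffices to show $\Delta_a(F_{\mathcal{A}}(l_+)) = \Delta_a(P^{(N)}(l_+,l_-))$ in $\Sq(\mathfrak{B}) \cong \mathcal{O}_q(SL_3)$. The left-hand side, by commutativity of \eqref{commutative annulus}, equals $F_{\mathfrak{B}}(\Delta_a(l_+)) = F_{\mathfrak{B}}(\sigma_1)$ where the last equality is Proposition \ref{splitting annulus}. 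The right-hand side, since $\Delta_a$ is an algebra homomorphism, equals $P^{(N)}(\Delta_a(l_+),\Delta_a(l_-)) = P^{(N)}(\sigma_1,\sigma_2)$, again by Proposition \ref{splitting annulus}. So the claim $F_{\mathcal{A}}(l_+) = P^{(N)}(l_+,l_-)$ is equivalent to
\begin{equation*}
F_{\mathfrak{B}}(\sigma_1) = P^{(N)}(\sigma_1,\sigma_2) \quad \text{in } \mathcal{O}_q(SL_3),
\end{equation*}
and similarly $F_{\mathcal{A}}(l_-) = P^{(N)}(l_-,l_+)$ is equivalent to $F_{\mathfrak{B}}(\sigma_2) = P^{(N)}(\sigma_2,\sigma_1)$.

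Next I would identify $F_{\mathfrak{B}}(\sigma_1)$ and $F_{\mathfrak{B}}(\sigma_2)$ explicitly. Here one must be slightly careful: $\sigma_1 = X_{11}+X_{22}+X_{33}$ and $\sigma_2$ are elements of $\mathcal{O}_1(SL_3)$ (the source of the Frobenius map), and $F_{\mathfrak{B}} = F_{\mathcal{O}_q}$ is the Parshall--Wang map of Theorem \ref{Frobenius Oq}, which sends each generator $X_{ij}$ to $X_{ij}^N$ but is \emph{not} simply ``raise to the $N$-th power'' on products, since source and target have different multiplications. However, because $\mathcal{O}_1(SL_3)$ is commutative and $F_{\mathcal{O}_q}$ is an algebra map, we get $F_{\mathfrak{B}}(\sigma_1) = X_{11}^N + X_{22}^N + X_{33}^N$, and $F_{\mathfrak{B}}(\sigma_2) = X_{11}^N X_{22}^N + X_{22}^N X_{33}^N + X_{11}^N X_{33}^N - (q\text{-powers})\,(X_{ij}^N)$-terms; one needs to track how the coefficient $-q$ in each term of $\sigma_2$ and the commutativity relations in the source interact under $F_{\mathcal{O}_q}$. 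The upshot is that these are exactly the right-hand sides appearing in Theorem \ref{cancellations}. Thus the desired identities $F_{\mathfrak{B}}(\sigma_i) = P^{(N)}(\sigma_i,\sigma_{3-i})$ are precisely the ``miraculous cancellation'' identities of Theorem \ref{cancellations} (Corollary \ref{miraculous cancellations}), which we are allowed to invoke.

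So the proof is essentially: apply $\Delta_a$ to both sides of each claimed equality, use the commutativity of \eqref{commutative annulus} and the algebra-homomorphism property of $\Delta_a$ to rewrite both sides in $\mathcal{O}_q(SL_3)$, invoke Proposition \ref{splitting annulus} to identify $\Delta_a(l_\pm)$ with $\sigma_1,\sigma_2$, invoke Theorem \ref{cancellations} to conclude the two sides agree in $\mathcal{O}_q(SL_3)$, and finally invoke injectivity of $\Delta_a$ to lift the equality back to $\Sq(\mathcal{A})$. The main obstacle — and the only genuinely nontrivial input — is Theorem \ref{cancellations} itself, i.e.\ establishing the miraculous cancellation identities for $P^{(N)}$; everything else in this proposition is formal bookkeeping with the splitting map and the commutative square. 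A secondary point requiring care is making sure the Frobenius map $F_{\mathfrak{B}}$ is computed correctly on the non-monomial, non-commutative expressions $\sigma_1,\sigma_2$ — in particular checking that the image of $\sigma_2 \in \Sc(\mathfrak{B})$ really is the second expression in Theorem \ref{cancellations} and not some variant with different $q$-coefficients, which comes down to the fact that $F_{\mathcal{O}_q}$ is a genuine algebra homomorphism out of the commutative ring $\mathcal{O}_1(SL_3)$.
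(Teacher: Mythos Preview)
Your argument is circular. You propose to invoke Theorem \ref{cancellations} (equivalently Corollary \ref{miraculous cancellations}) as the ``only genuinely nontrivial input,'' but in the paper that corollary is \emph{deduced from} Proposition \ref{Frobenius loop}, not the other way around: its proof reads ``the first identity appeared in the proof of Proposition \ref{Frobenius loop}.'' So you cannot cite the miraculous cancellation identities here; this proposition is precisely where they are first established.

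The paper's actual argument avoids this circularity by running your reduction only halfway and then taking a different exit. As you say, the commutative square \eqref{commutative annulus} gives $Q(\sigma_1,\sigma_2)=X_{11}^N+X_{22}^N+X_{33}^N$ in $\mathcal{O}_q(SL_3)$, where $Q$ is the unknown polynomial with $F_{\mathcal{A}}(l_+)=Q(l_+,l_-)$. But rather than trying to recognize the right-hand side as $P^{(N)}(\sigma_1,\sigma_2)$ directly (which would require the cancellation identity you do not yet have), the paper applies the \emph{diagonalization homomorphism}
\[
D:\mathcal{O}_q(SL_3)\to \mathcal{R}[\lambda_1,\lambda_2,\lambda_3]/(\lambda_1\lambda_2\lambda_3-1),\qquad X_{ij}\mapsto \delta_{ij}\lambda_i,
\]
to both sides. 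This kills all off-diagonal terms and yields $Q(E_1,E_2)=\lambda_1^N+\lambda_2^N+\lambda_3^N$, which is exactly the \emph{defining} property of $P^{(N)}$ from Definition \ref{power sum}. Algebraic independence of $E_1,E_2$ then forces $Q=P^{(N)}$. The miraculous cancellation identity $P^{(N)}(\sigma_1,\sigma_2)=X_{11}^N+X_{22}^N+X_{33}^N$ drops out as a byproduct. Your framework is fine up to the point where both sides live in $\mathcal{O}_q(SL_3)$; the missing idea is the diagonalization map that lets you identify $Q$ without already knowing the cancellation.
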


\begin{proof}
We first note that the second identity in the proposition is the same as the first with all arrows reversed. Since the definining skein relations (\ref{2gon})-(\ref{spike}) are invariant under the operation of reversing all arrows, it will suffice to prove the first identity.

Since $\mathcal{S}_q^{SL_3}(\mathcal{A})=\mathcal{R}[l_+,l_-],$ we know that $F_\mathcal{A}(l_+)=Q(l_+,l_-) \in \mathcal{R}[l_+,l_-]$ for some polynomial $Q.$ Our goal is to show that  $Q=P^{(N)}.$
	
From the commutative square (\ref{commutative annulus}), we get
	
	\begin{align*}
		F_\mathfrak{B} \circ \Delta_a (l_+)&= F_\mathfrak{B}(X_{11}+X_{22}+X_{33})\\
		&= X_{11}^N+X_{22}^N+X_{33}^N.
	\end{align*}
	
	On the other hand, we consider the computation of
	
	\begin{align*}
		\Delta_{a} \circ F_{\mathcal{A}}(l_+)&= \Delta_a(Q(l_+,l_-))\\
		&=Q(\Delta_a(l_+),\Delta_a(l_-))\\
		&=Q(\sigma_1,\sigma_2),
	\end{align*}

where the second equality uses the fact that $\Delta_a$ is an algebra homomorphism and $Q$ is a polynomial.

By the commutativity of the square, we have the following equality in $\mathcal{O}_q(SL_3):$

\begin{equation}\label{poly P}
	Q(\sigma_1,\sigma_2)=X_{11}^N+X_{22}^N+X_{33}^N.
\end{equation}

Now consider the diagonalization map

\begin{align*}
	D: \mathcal{O}_q(SL_3)&\rightarrow \mathcal{R}[\lambda_{1},\lambda_{2},\lambda_{3}]/(\lambda_{1}\lambda_{2}\lambda_{3}=1)\\
	X_{ij} & \mapsto \delta_{ij}\lambda_{i},
\end{align*}

which is seen to be an algebra homomorphism by checking that it respects each of the defining relations of $\mathcal{O}_q(SL_3)$ from Definition \ref{Oq}. Let $E_1=\lambda_1+\lambda_2+\lambda_3$ and let $E_2=\lambda_1\lambda_2 +\lambda_2\lambda_3+\lambda_1\lambda_3.$ Note the identities $D(\sigma_1)=E_1$ and $D(\sigma_2)=E_2.$

Applying $D$ to both sides of Equation (\ref{poly P}) yields

\begin{align*}
	D(Q(\sigma_1,\sigma_2)) &=D(X_{11}^N+X_{22}^N+X_{33}^N)\\
	Q(D(\sigma_1),D(\sigma_2))&=D(X_{11})^N+D(X_{22})^N+D(X_{33})^N\\
	Q(E_1,E_2)&=\lambda_1^N+\lambda_2^N+\lambda_3^N,
\end{align*}

where the second line uses the fact that $D$ is an algebra homomorphism and $Q$ is a polynomial.

By the defining property of $P^{(N)}$ from Definition \ref{power sum}, we see that
\begin{equation*}
Q(E_1,E_2)=P^{(N)}(E_1,E_2).
\end{equation*}

By the algebraic independence of $E_1$ and $E_2,$ we have that $Q=P^{(N)}.$

\end{proof}

\begin{remark}
The equality (\ref{poly P}) along with the deduction that $Q=P^{(N)}$ gives the equality $P^{(N)}(\sigma_1,\sigma_2)=X_{11}^N+X_{22}^N+X_{33}^N,$ which is an $SL_3$ analogue of the `miraculous cancellations' found in \cite{BW16,Bon19}, and generalizes the equality $T_N(a+d)=a^N+d^N$ in $ \mathcal{O}_q(SL_2)$ proven in \cite{KQ19,BL22}.
\end{remark}

\begin{corollary}\label{miraculous cancellations}
When $q^{1/3}$ is a root of unity of order $N$ coprime to $6$ we have the following identities in $\mathcal{O}_q(SL_3):$
\begin{align*}
	P^{(N)}(\sigma_1,\sigma_2)=&X_{11}^N+X_{22}^N+X_{33}^N\\
	P^{(N)}(\sigma_2,\sigma_1)=&X_{11}^NX_{22}^N+X_{11}^NX_{33}^N+X_{22}^NX_{33}^N\\
	&- X_{12}^NX_{21}^N-X_{13}^NX_{31}^N-X_{23}^NX_{32}^N.
\end{align*}
\end{corollary}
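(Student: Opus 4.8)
\textbf{Proof proposal for Corollary \ref{miraculous cancellations}.}

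The plan is to obtain both identities as consequences of Proposition \ref{Frobenius loop} together with the commutative square (\ref{commutative annulus}), using the symmetry under arrow-reversal to get the second identity essentially for free once the first is in hand. The first identity is already recorded in the remark immediately preceding the corollary: running the square (\ref{commutative annulus}) on $l_+$ one way gives $F_{\mathfrak B}\circ\Delta_a(l_+)=X_{11}^N+X_{22}^N+X_{33}^N$, while the other way gives $\Delta_a\circ F_{\mathcal A}(l_+)=\Delta_a(P^{(N)}(l_+,l_-))=P^{(N)}(\sigma_1,\sigma_2)$, using that $\Delta_a$ is an algebra map and that $\Delta_a(l_+)=\sigma_1$, $\Delta_a(l_-)=\sigma_2$ by Proposition \ref{splitting annulus}. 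Equating the two gives $P^{(N)}(\sigma_1,\sigma_2)=X_{11}^N+X_{22}^N+X_{33}^N$.

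For the second identity I would repeat the argument with $l_-$ in place of $l_+$. Proposition \ref{Frobenius loop} gives $F_{\mathcal A}(l_-)=P^{(N)}(l_-,l_+)$, so chasing (\ref{commutative annulus}) yields $P^{(N)}(\sigma_2,\sigma_1)=\Delta_a\circ F_{\mathcal A}(l_-)=F_{\mathfrak B}\circ\Delta_a(l_-)=F_{\mathfrak B}(\sigma_2)$. It then remains to compute $F_{\mathfrak B}(\sigma_2)$ explicitly in $\mathcal O_q(SL_3)$. Since $\sigma_2=\sum_{i<j}(X_{ii}X_{jj}-qX_{ij}X_{ji})$ and $F_{\mathfrak B}=F_{\mathcal O_q}$ is the Parshall–Wang algebra homomorphism sending each generator $X_{ij}$ to $X_{ij}^N$, I need to check that $F_{\mathfrak B}(X_{ii}X_{jj}-qX_{ij}X_{ji})=X_{ii}^NX_{jj}^N-X_{ij}^NX_{ji}^N$ for each pair $i<j$; note the coefficient $-q$ in front of $X_{ij}X_{ji}$ becomes $-1$ after applying $F_{\mathfrak B}$ because $q$ has order $N$, so $q^{\,?}$-powers collapse. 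Concretely, each two-by-two principal quantum minor $X_{ii}X_{jj}-qX_{ij}X_{ji}$ generates (with the other minors) a copy of $\mathcal O_q(SL_2)$ inside $\mathcal O_q(SL_3)$ via the obvious quantum sub-matrix, and under $F_{\mathcal O_q}$ the $SL_2$ quantum determinant $ad-qbc$ maps to $a^Nd^N-b^Nc^N$ (this is the standard fact that Parshall–Wang restricts to Lusztig's Frobenius on the $SL_2$ sub-Hopf-algebra, and is what makes the quantum determinant relation survive). Summing over the three pairs $(i,j)\in\{(1,2),(1,3),(2,3)\}$ gives exactly the claimed right-hand side.

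The main obstacle I anticipate is the bookkeeping in the last step: verifying that $F_{\mathfrak B}(X_{ii}X_{jj}-qX_{ij}X_{ji})=X_{ii}^NX_{jj}^N-X_{ij}^NX_{ji}^N$ requires knowing how $F_{\mathcal O_q}$ behaves on products, and since $F_{\mathcal O_q}$ lands in the center, the precise $q$-power that appears when one writes $X_{ij}^NX_{ji}^N$ versus $(X_{ij}X_{ji})^N$ must be controlled. Because $F_{\mathcal O_q}$ is by definition the algebra homomorphism $X_{ij}\mapsto X_{ij}^N$ from the commutative $\mathcal O_1(SL_3)$, the image is automatically a commuting family, so $F_{\mathcal O_q}(X_{ii}X_{jj})=X_{ii}^NX_{jj}^N$ with no correction, and likewise $F_{\mathcal O_q}(X_{ij}X_{ji})=X_{ij}^NX_{ji}^N$; the coefficient $-q$ in $\sigma_2$ is a scalar and $F_{\mathcal O_q}$ applied to a scalar multiple of a product is that scalar times the image only after one checks $q\in\mathcal R$ acts the same on source and target — here $q\mapsto q$, but since $F_{\mathcal O_q}$ is a map of $\mathcal R$-algebras and $\sigma_2$ already has $-q$ as its honest coefficient in $\mathcal O_1(SL_3)$ (where it equals the ordinary second elementary symmetric expression because $q=1$ there — wait, one must be careful: $\sigma_2\in\mathcal S_1^{SL_3}(\mathfrak B)$ is $\sum(X_{ii}X_{jj}-X_{ij}X_{ji})$), so in fact $F_{\mathfrak B}(\sum_{i<j}(X_{ii}X_{jj}-X_{ij}X_{ji}))=\sum_{i<j}(X_{ii}^NX_{jj}^N-X_{ij}^NX_{ji}^N)$ directly. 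This resolves the apparent coefficient mismatch and is the one point deserving a careful sentence in the write-up; everything else is a formal diagram chase. An alternative route that sidesteps this entirely is to apply the diagonalization map $D$ of Proposition \ref{Frobenius loop} to the purported identity and invoke the defining property of $P^{(N)}(e_2,e_1)$ from the remark after Definition \ref{power sum} together with algebraic independence of $E_1,E_2$ — but since the elements $X_{ij}^N$ for $i\neq j$ are not in the image of $D$, this only verifies the identity after applying $D$, so I would keep the direct computation of $F_{\mathfrak B}(\sigma_2)$ as the primary argument.
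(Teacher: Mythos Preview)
Your proposal is correct and follows essentially the same approach as the paper, which simply says to repeat the proof of Proposition \ref{Frobenius loop} applying the square (\ref{commutative annulus}) to $l_-$ instead of $l_+$. You have filled in the one detail the paper leaves implicit, namely that $\Delta_a(l_-)$ lives in $\mathcal{S}_1^{SL_3}(\mathfrak B)=\mathcal O_1(SL_3)$ where the coefficient is $-1$ rather than $-q$, so $F_{\mathfrak B}(\sigma_2|_{q=1})=\sum_{i<j}(X_{ii}^NX_{jj}^N-X_{ij}^NX_{ji}^N)$ on the nose; your self-correction on this point is exactly right.
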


\begin{proof}
As remarked, the first identity appeared in the proof of Proposition \ref{Frobenius annulus}. The second identity can be seen by repeating the proof but applying both sides of (\ref{commutative annulus}) to $l_-$ instead of to $l_+.$
\end{proof}

\begin{corollary}\label{Frobenius annulus}
	Consider $k$ parallel ideal arcs with disjoint interiors connecting one puncture of $\mathcal{A}$ to the other. Denote by
	
	\begin{equation*}
		\Delta^{(k)}: \mathcal{S}_q^{SL_3}(\mathcal{A}) \rightarrow \mathcal{S}_q^{SL_3}(\mathfrak{B})^{\otimes k}
	\end{equation*}
	
	the result of successively applying the splitting map to each of the $k$ parallel arcs.
	
	Then the following identity holds:
	
	\begin{equation*}
		\Delta^{(k)}\circ F_\mathcal{A}= (F_{\mathfrak{B}}^{\otimes k}) \circ \Delta^{(k)}, 
	\end{equation*}

	and consequently
	
	\begin{equation*}
		\Delta^{(k)}F_{\mathcal{A}}(l_+)=\sum_{i,i_1,...,i_{k-1}} X_{ii_1}^N \otimes X_{i_1i_2}^N \otimes \cdots \otimes X_{i_{k-1}i}^N.
	\end{equation*}
	
\end{corollary}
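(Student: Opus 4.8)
The plan is to reduce the corollary to the already-established commutativity of the single-arc square~\eqref{commutative annulus}. First I would observe that the $k$ parallel ideal arcs in $\mathcal{A}$ have disjoint interiors, so by part (3) of Theorem~\ref{splitting} the map $\Delta^{(k)}$ is well-defined independent of the order in which the arcs are processed, and moreover $\Delta^{(k)}$ is genuinely the iterate of $\Delta_a$: cutting $\mathcal{A}$ along the first arc yields a bigon $\mathfrak{B}$, and each subsequent arc becomes an ideal arc in the resulting bigon whose associated splitting is the coproduct $\Delta_c$ on $\Sq(\mathfrak{B})$. Concretely, $\Delta^{(k)} = (\Delta_c \otimes \mathrm{id}^{\otimes(k-2)}) \circ \cdots \circ (\Delta_c \otimes \mathrm{id}) \circ \Delta_c \circ \Delta_a$, where all but the first map is a coproduct on a bigon factor. (For $k=1$ the statement is exactly~\eqref{commutative annulus}, so one may assume $k \geq 2$ and induct.)

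Next I would assemble the commuting square. The key input besides~\eqref{commutative annulus} is that $F_{\mathfrak{B}}$ is a \emph{Hopf algebra} map (Theorem~\ref{Frobenius Oq}, transported across $\Sq(\mathfrak{B}) \cong \Oq$), so in particular it is a coalgebra map: $(F_{\mathfrak{B}} \otimes F_{\mathfrak{B}}) \circ \Delta_c = \Delta_c \circ F_{\mathfrak{B}}$ on $\Sq(\mathfrak{B})$. Combining this with the $q=1$ versus $q$ compatibility of the coproduct (the splitting map $\Delta_c$ is defined by the same diagrammatic recipe at every specialization, so the square with vertical maps $F_{\mathfrak{B}}$ and horizontal maps $\Delta_c$ commutes), an easy diagram chase stacks the single-arc square~\eqref{commutative annulus} on top of $(k-1)$ copies of the $F_{\mathfrak{B}}$-coproduct square to yield
\begin{equation*}
\Delta^{(k)} \circ F_{\mathcal{A}} = (F_{\mathfrak{B}}^{\otimes k}) \circ \Delta^{(k)}.
\end{equation*}
Formally this is an induction on $k$: assuming $\Delta^{(k-1)} \circ F_{\mathcal{A}} = (F_{\mathfrak{B}}^{\otimes(k-1)}) \circ \Delta^{(k-1)}$, apply $\Delta_c$ to the last tensor factor of both sides and use that $(\Delta_c \otimes \mathrm{id}^{\otimes(k-2)})$ intertwines $F_{\mathfrak{B}}^{\otimes(k-1)}$ with $F_{\mathfrak{B}}^{\otimes k}$ by the coalgebra property.

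Finally I would extract the explicit formula. Since $\Delta_c(X_{ij}) = \sum_r X_{ir} \otimes X_{rj}$ on $\Sq(\mathfrak{B}) \cong \Oq$, iterating gives $\Delta^{(k)}(X_{ij}) = \sum_{i_1,\dots,i_{k-1}} X_{i i_1} \otimes X_{i_1 i_2} \otimes \cdots \otimes X_{i_{k-1} j}$, and therefore, using Proposition~\ref{splitting annulus} ($\Delta_a(l_+) = \sigma_1 = \sum_i X_{ii}$) together with the already-proven intertwining,
\begin{align*}
\Delta^{(k)} F_{\mathcal{A}}(l_+) &= F_{\mathfrak{B}}^{\otimes k}\!\left(\Delta^{(k)}(l_+)\right) = F_{\mathfrak{B}}^{\otimes k}\!\left(\sum_{i,i_1,\dots,i_{k-1}} X_{i i_1} \otimes X_{i_1 i_2} \otimes \cdots \otimes X_{i_{k-1} i}\right) \\
&= \sum_{i,i_1,\dots,i_{k-1}} X_{i i_1}^N \otimes X_{i_1 i_2}^N \otimes \cdots \otimes X_{i_{k-1} i}^N,
\end{align*}
where the last step uses that $F_{\mathfrak{B}}$ sends each generator $X_{ab}$ to $X_{ab}^N$. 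The main obstacle, such as it is, is bookkeeping: one must make sure that the iterated splitting $\Delta^{(k)}$ really does match the $(k-1)$-fold coproduct on the bigon factors (this uses associativity/coassociativity, i.e. Theorem~\ref{splitting}(3) plus the Hopf structure), and that the passage from $q=1$ to $q$ in the coproduct squares is legitimate — both of which are formal once one notes the diagrammatic definition of $\Delta_c$ is specialization-independent. There is no genuine analytic or combinatorial difficulty here; the corollary is essentially a packaging of~\eqref{commutative annulus} with the coalgebra property of $F_{\mathfrak{B}}$.
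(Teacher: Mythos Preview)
Your proposal is correct and follows essentially the same approach as the paper: both arguments reduce to the $k=1$ square~\eqref{commutative annulus} and then iterate using the fact that $F_{\mathfrak{B}}$ is a coalgebra map (Theorem~\ref{Frobenius Oq}). The only cosmetic difference is that the paper first observes all maps are algebra homomorphisms and so checks the intertwining identity on the generators $l_+,l_-$ (reducing to $l_+$ by the orientation-reversal symmetry), whereas you phrase it as an induction on $k$ at the level of maps; the resulting chain of equalities is the same.
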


\begin{proof}
	
	Since the maps involved are algebra maps, it will suffice to check the identity on the generators $l_+,l_-.$ Since the identity is symmetric with respect to reversing orientations of strands, we only need to check it on $l_+.$
	
	By the proof of Proposition \ref{Frobenius loop} we have $\Delta_a F_{\mathcal{A}}(l_+)=\Delta_a P^{(N)}(l_+,l_-)=F_{\mathfrak{B}}(\Delta_a(l_+)).$ Then using the fact that $F_{\mathfrak{B}}$ is a coalgebra homomorphism (Theorem \ref{Frobenius Oq}), we obtain that
	
	\begin{align*}
		\Delta^{(k)}F_{\mathcal{A}}(l_+)&= \Delta^{(k-1)}(\Delta_a F_{\mathcal{A}}(l_+))\\
		&= \Delta^{(k-1)}F_{\mathfrak{B}}(\Delta_a(l_+))\\
		&= F_{\mathfrak{B}}^{\otimes k}\Delta^{(k-1)}\Delta_a(l_+)\\
		&=F_{\mathfrak{B}}^{\otimes k}\Delta^{(k)}(l_+)\\
		&= F_{\mathfrak{B}}^{\otimes k}(\sum_{i,i_1,...,i_{k-1}}X_{ii_1} \otimes X_{i_1i_2} \otimes \cdots \otimes X_{i_{k-1}i})\\
		&= \sum_{i,i_1,...,i_{k-1}} X_{ii_1}^N \otimes X_{i_1i_2}^N \otimes \cdots \otimes X_{i_{k-1}i}^N.
	\end{align*}
	
\end{proof}

\subsection{The Frobenius map for punctured surfaces}

Suppose that $\Sigma$ is a punctured surface with an ideal triangulation. Then as shown in \cite{KQ19,Hig23}, by decomposing the surface into ideal triangles, we can define a Frobenius map on the skein algebra by using the relationship between stated skein algebras and quantum groups.

\begin{theorem}(\cite{Hig23}, see also \cite{KQ19, BL22} for $n=2$)\label{Frobenius triangulation}
Suppose that $q^{1/3} \in \mathcal{R}$ is a root of unity of order coprime to $6.$ If $\Sigma$ is a punctured surface so that each connected component of $\Sigma$ contains at least one puncture, then there exists an algebra embedding of $\mathcal{S}_1^{SL_3}(\Sigma)$ into the center of the skein algebra

\begin{equation*}
F_\Sigma: \mathcal{S}_1^{SL_3}(\Sigma) \hookrightarrow Z(\mathcal{S}_q^{SL_3}(\Sigma)),
\end{equation*}

induced by the Frobenius homomorphism for the quantum group $\Oq.$
\end{theorem}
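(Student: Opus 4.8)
The plan is to assemble the map from the building-block surfaces using the splitting homomorphism, exactly as the Frobenius map for the annulus was built in the commutative square (\ref{commutative annulus}). First I would fix an ideal triangulation $T$ of $\Sigma$; by hypothesis every connected component has at least one puncture, so such a triangulation exists. Cutting $\Sigma$ along all the internal edges of $T$ produces a disjoint union of ideal triangles $\mathfrak{T}_1,\dots,\mathfrak{T}_f$, and iterating the splitting homomorphism of Theorem \ref{splitting} gives an injective algebra map $\Delta_T\colon \mathcal{S}_q^{SL_3}(\Sigma)\hookrightarrow \bigotimes_{k} \mathcal{S}_q^{SL_3}(\mathfrak{T}_k)$ (injectivity and independence of the order of the cuts are parts (2) and (3) of Theorem \ref{splitting}). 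Since $\mathcal{S}_q^{SL_3}(\mathfrak{T})\cong \Oq\underset{-}{\otimes}\Oq$, each triangle factor carries the Parshall–Wang Frobenius map $F_{\mathfrak{T}}=F_{\mathcal{O}_q}^{\otimes 2}$ from Theorem \ref{Frobenius Oq}, which is an algebra embedding landing in the center. Tensoring these together yields a central algebra embedding $F_T\colon \bigotimes_k \mathcal{S}_1^{SL_3}(\mathfrak{T}_k)\hookrightarrow Z\bigl(\bigotimes_k \mathcal{S}_q^{SL_3}(\mathfrak{T}_k)\bigr)$.

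The second step is to show that $F_T$ descends to a map on $\mathcal{S}_1^{SL_3}(\Sigma)$, i.e.\ that it carries $\mathrm{im}(\Delta_T)$ at $q=1$ into $\mathrm{im}(\Delta_T)$ at the root of unity. By part (4) of Theorem \ref{splitting}, $\mathrm{im}(\Delta_T)$ is cut out by equations of the form $\Delta_a - \tau\circ\Delta_b = 0$, one for each internal edge $c=g(a)=g(b)$ of the triangulation. So it suffices to check that $F_{\mathfrak{T}}$ (on the triangle side of each such gluing) intertwines the two boundary-edge splitting maps $\Delta_a$ and $\Delta_b$ with their $q$-deformed counterparts. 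Concretely, a boundary-edge splitting of a triangle $\mathfrak{T}\cong \Oq\underset{-}{\otimes}\Oq$ onto a bigon $\mathfrak{B}\cong\Oq$ is, under these identifications, essentially the coproduct (or a partial coproduct) of $\Oq$; since $F_{\mathcal{O}_q}$ is a Hopf algebra map (Theorem \ref{Frobenius Oq}), it commutes with the coproduct, and hence with these edge-splitting maps. Chasing this compatibility through all internal edges shows $F_T(\mathrm{im}(\Delta_1)) \subseteq \mathrm{im}(\Delta_q)$, so there is a unique algebra map $F_\Sigma\colon \mathcal{S}_1^{SL_3}(\Sigma)\to \mathcal{S}_q^{SL_3}(\Sigma)$ with $\Delta_T\circ F_\Sigma = F_T\circ \Delta_T$. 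Centrality of the image follows because $\Delta_T$ is injective and $F_T$ lands in the center of the tensor product, which pulls back (via the injectivity of $\Delta_T$ and the fact that $\Delta_T$ is an algebra map) to centrality of $F_\Sigma$'s image in $\mathcal{S}_q^{SL_3}(\Sigma)$. Injectivity of $F_\Sigma$ follows from injectivity of $\Delta_T$ (both $q$'s) together with injectivity of $F_T$.

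The main obstacle, and the step requiring genuine care rather than formal nonsense, is verifying that the construction is independent of the chosen triangulation — or at least that the resulting map agrees with the one already in the literature. Different triangulations are related by flips, and one must check $F_\Sigma$ is unchanged under a flip; this reduces to a statement about the square or pentagon surface, where it can be verified directly, or one can invoke the corresponding compatibility already established in \cite{Hig23}. A secondary subtlety is the behavior at boundary edges of $\Sigma$ (as opposed to internal edges): there the bigon Frobenius map $F_{\mathfrak{B}}$ must be used, and one needs the orientation-independence of $F_{\mathfrak{B}}$ noted in Section \ref{Frobenius bigon} (citing \cite[Proposition 12.5]{Hig23}) so that the gluing data is consistent. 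Since this theorem is quoted from \cite{Hig23}, in the paper itself I would simply cite that reference for the full verification and only sketch the triangulation-cutting strategy above; the new content of the present paper is the identification of this abstractly-defined $F_\Sigma$ with the explicit threading operation, which is carried out in the subsequent sections.
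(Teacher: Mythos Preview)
Your sketch is correct and matches the paper's approach: the paper does not prove this theorem in-line but cites \cite{Hig23}, and immediately afterward records exactly the construction you describe (Definition \ref{Frobenius definition}), namely $F_\Sigma$ is the unique map making the square with $\Delta_{\mathcal{E}}$ and $\otimes_i F_{\mathfrak{T}_i}$ commute, with existence and uniqueness coming from the exact sequence in Theorem \ref{splitting}(4). Your recognition that the theorem is quoted and that the new content lies in the later identification with threading is also on target.

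One small caveat worth tightening: your opening line ``every connected component has at least one puncture, so such a triangulation exists'' is not quite right. The once-punctured sphere and the twice-punctured sphere (the annulus $\mathcal{A}$) are not ideal triangulable, yet they are covered by the theorem statement. The paper handles these separately: the annulus is split into a single bigon (as in diagram (\ref{commutative annulus})), and the once-punctured sphere has trivial skein algebra. Later in the paper (just before Definition \ref{Frobenius definition} is applied) these two exceptional components are explicitly excluded when a triangulation is invoked. This is a minor bookkeeping point, not a structural gap in your argument.
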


For the $SL_2$ case the analogous construction agrees with the threading map of Bonahon-Wong \cite{BW16}. The goal of this section is to show that for $SL_3$ the construction agrees with the threading operation in \cite{BH23}.

\begin{definition}\label{Frobenius definition}
For any ideal triangulable surface, with a fixed triangulation consisting of edge set $\mathcal{E},$ the map $F_\Sigma$ is defined  in \cite[Section 12]{Hig23} to be the unique algebra homomorphism making the following diagram of algebras commute

\begin{equation*}
	\begin{tikzcd}
		\mathcal{S}_1^{SL_3}(\Sigma) \arrow[r,"\Delta_{\mathcal{E}}"] \arrow[d,dashed, "F_{\Sigma}"] & \displaystyle\bigotimes_{i=1}^{n} \mathcal{S}_1^{SL_3}(\mathfrak{T}_i)  \arrow[d,"\otimes_i F_{\mathfrak{T}_i}"] \\
		\mathcal{S}_q^{SL_3}(\Sigma) \arrow[r,"\Delta_{\mathcal{E}}"] & \displaystyle\bigotimes_{i=1}^{n} \mathcal{S}_q^{SL_3}(\mathfrak{T}_i).\\
	\end{tikzcd},
\end{equation*}
where $\Delta_\mathcal{E}$ is given by the successive application of the splitting maps for each ideal arc of the triangulation.
\end{definition}

The existence and uniqueness relies on applications of the exact sequence associated to the splitting homomorphism in Theorem \ref{splitting}. The maps $F_\mathfrak{T}$ are defined by $F_\mathfrak{T}=F_\mathfrak{B} \otimes F_\mathfrak{B}$ where $F_\mathfrak{B}: \Sc(\mathfrak{B}) \rightarrow \Sq(\mathfrak{B})$ is identified with the Frobenius map of Parshall-Wang using the identification $\mathcal{S}_q^{SL_3}(\mathfrak{B})\cong \mathcal{O}_q(SL_3).$

Although the above construction of $F_\Sigma$ guarantees the existence of the map and proves several nice properties about it, the construction does not immediately provide a topological description of the image of $F_\Sigma$ on a link $L$ in $\Sigma \times I.$ To build up to this, we will first examine the image of a collection of stated arcs in the triangle.

\subsection{The Frobenius image of a tangle in the triangle}

Let $T$ be a stated tangle, with no component which is a knot, in the thickened triangle $\mathfrak{T} \times I.$ The tangle $T$ is a union of stated framed arcs $\alpha_i.$ Denote by $\alpha_i^{(N)}$ the \textit{framed power} of the arc $\alpha_i,$ which is the result of taking the union of $N$ disjoint parallel copies of the stated arc $\alpha_i$ in the direction of the framing. The following can be deduced without much work from the moves derived in \cite[Section 12]{Hig23}, but Wang in \cite{Wang23} has also shown the analogous result for $SL_n$ using moves that we find convenient to cite here.

\begin{proposition}(\cite{Wang23})\label{Frobenius triangle}
For $T=\cup \alpha_i$ a tangle that is a union of stated arcs in $\Sc(\mathfrak{T}),$ the Frobenius image $F_\mathfrak{T}(T)$ is the tangle $F_{\mathfrak{T}}(T)=\cup \alpha_i^{(N)}$ consisting of framed powers of the stated arcs comprising $T.$
\end{proposition}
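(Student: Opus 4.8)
The plan is to reduce the statement to the case of a single stated arc and then to the bigon, where the computation of $F_{\mathfrak{B}}$ is already known from Section \ref{Frobenius bigon}. First I would recall that $F_{\mathfrak{T}} = F_{\mathfrak{B}} \otimes F_{\mathfrak{B}}$ under the identification $\Sq(\mathfrak{T}) \cong \Oq \underset{-}{\otimes} \Oq$, so it suffices to understand the Frobenius image of a single stated arc after splitting the triangle along an ideal arc into two bigons. One subtlety is that $F_{\mathfrak{T}}$ is defined as an algebra map, whereas a tangle $T = \cup \alpha_i$ that is a union of several parallel or nested stated arcs is generally \emph{not} a product of its constituent arcs in the (noncommutative) algebra $\Sq(\mathfrak{T})$. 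So the first real step is to express $T$, as an element of $\Sc(\mathfrak{T})$, in terms of a generating set on which $F_{\mathfrak{T}}$ is understood, apply the algebra homomorphism $F_{\mathfrak{T}}$, and then reassemble the answer topologically. Since $q=1$ makes $\Sc(\mathfrak{T})$ commutative, the arcs $\alpha_i$ \emph{do} commute in the source algebra, so $T = \prod_i \alpha_i$ in $\Sc(\mathfrak{T})$ and hence $F_{\mathfrak{T}}(T) = \prod_i F_{\mathfrak{T}}(\alpha_i)$ in $\Sq(\mathfrak{T})$; the remaining work is to show that this product of images equals the single tangle $\cup \alpha_i^{(N)}$.

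The key steps, in order, are: (1) observe that at $q=1$ the tangle $T$ is the product of its components in $\Sc(\mathfrak{T})$, so $F_{\mathfrak{T}}(T) = \prod_i F_{\mathfrak{T}}(\alpha_i)$; (2) reduce to computing $F_{\mathfrak{T}}(\alpha)$ for a single stated arc $\alpha$, which by further splitting and the definition $F_{\mathfrak{T}} = F_{\mathfrak{B}}\otimes F_{\mathfrak{B}}$ reduces to the bigon, where by Section \ref{Frobenius bigon} (and \cite[Prop.\ 12.5, 12.4]{Hig23}) the image of a single stated arc --- in either orientation --- is its $N$-th framed power $\alpha^{(N)}$; (3) pull this back through the splitting homomorphisms $\Delta_c$ (which are injective by Theorem \ref{splitting}(2)) to conclude that $F_{\mathfrak{T}}(\alpha) = \alpha^{(N)}$ as an element of $\Sq(\mathfrak{T})$, using that $\Delta_c$ sends framed powers to framed powers of the split arcs and respects admissible states; (4) reassemble: show $\prod_i \alpha_i^{(N)}$, taken with the superposition product in $\Sq(\mathfrak{T})$, is isotopic to the single tangle $\cup \alpha_i^{(N)}$, i.e. that stacking the framed powers in the $I$-direction can be isotoped to lie in the framing neighborhood of $T$ without crossings (this is where one uses that $T$ has no knot components and that the $\alpha_i$ are disjoint framed arcs in the triangle, so their parallel copies can be merged into a single framing-parallel family). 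Here is precisely the point where I would lean on the moves of Wang \cite{Wang23}, which are engineered to handle exactly this merging of framed powers for $SL_n$ stated arcs in the triangle.

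The main obstacle I anticipate is step (4): controlling the ordering and potential crossings when one passes from the algebraic product $\prod_i F_{\mathfrak{T}}(\alpha_i)$ (a superposition of $N$-fold framed powers, stacked at different heights in the $I$-coordinate) to the genuinely parallel family $\cup_i \alpha_i^{(N)}$ sitting in a single framing neighborhood of $T$. When two arcs $\alpha_i, \alpha_j$ are nested or their framing neighborhoods are not disjoint in $\mathfrak{T}$, the framed powers interleave, and one must check --- using the Kuperberg relations \eqref{2gon}--\eqref{loop} together with the boundary relations \eqref{switch}--\eqref{spike} --- that the resulting web equals the expected framed-power tangle with no correction coefficients. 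The cleanest route is to cite the relevant lemmas from \cite{Wang23} (and cross-check against \cite[Section 12]{Hig23}), which establish that framed powers of stated arcs in the triangle behave multiplicatively under superposition and are fixed by the relevant skein moves; granting those, steps (1)--(3) are routine applications of the definitions and of injectivity of the splitting maps, and the proposition follows.
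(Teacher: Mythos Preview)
Your overall strategy matches the paper's: decompose at $q=1$, apply $F_{\mathfrak T}$ as an algebra map, and reassemble using Wang's moves for framed powers. You also correctly flag step~(4) as the place where real work happens. The one substantive difference is in how you handle steps~(2)--(3). You propose to establish $F_{\mathfrak T}(\alpha)=\alpha^{(N)}$ for an \emph{arbitrary} single stated arc by splitting the triangle into bigons and invoking injectivity of $\Delta_c$; but if $\alpha$ has self-crossings, or both endpoints on the same boundary edge, the splitting produces a state sum of nontrivial products in the bigon pieces, and matching this against $\Delta_c(\alpha^{(N)})$ is not obviously simpler than the proposition itself. Section~\ref{Frobenius bigon} and \cite[Prop.~12.5]{Hig23} only give you $F_{\mathfrak B}$ on \emph{simple} stated arcs, not on arbitrary arcs in the bigon.

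The paper sidesteps this: rather than stopping the $q=1$ decomposition at the components $\alpha_i$, it pushes further and uses the full package of $q=1$ moves (crossing switch, height exchange along the boundary, and Reidemeister moves) to write $T=\prod_i \beta_i$ in $\Sc(\mathfrak T)$, where each $\beta_i$ is a \emph{simple noncrossing} arc --- either a corner arc of the triangle or an arc parallel to a boundary interval. For these specific $\beta_i$ the identity $F_{\mathfrak T}(\beta_i)=\beta_i^{(N)}$ is already established in \cite[Section~12]{Hig23}, so no splitting-and-injectivity argument is needed at all. The reassembly step then quotes Wang's result \cite[Section~7.1]{Wang23} that the framed $N$-th powers $\beta_i^{(N)}$ satisfy the \emph{same} moves in $\Sq(\mathfrak T)$, so the exact sequence of simplifying moves used to go from $T$ to $\prod\beta_i$ can be run in reverse on the $\beta_i^{(N)}$ to arrive at $\cup\,\alpha_i^{(N)}$. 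In short: the paper folds your steps~(2)--(3) into a more aggressive version of step~(1), after which step~(4) becomes the literal reversal of step~(1) rather than a separate merging argument.
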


For completeness, we will recapitulate the details in our setting. 

\begin{proof}
When $q=1,$ the skein relations imply that stated arcs in the tangle $T$ satisfy special moves including the following crossing switch and height exchange moves, which hold for any consistent strand orientations and choice of states $a,b,c,d \in \{1,2,3\}$

\begin{align}
\label{q=1 moves}
\begin{tikzpicture}[baseline=3ex]
\node (center) at (0,1/2) {};
\draw [line width=1.5] (-1/2,-1/8)--(-1/2,1/8);
\draw [line width=1.5] (-1/2,1-1/8)--(-1/2,1+1/8);
\draw [line width=1.5] (1/2,-1/8)--(1/2,1/8);
\draw [line width=1.5] (1/2,1-1/8)--(1/2,1+1/8);
\draw (-1/2,0)--(center);
\draw (center)--(1/2,1);
\draw (-1/2,1)--(1/2,0);
\node [left] at (-1/2,1) {$a$};
\node [right] at (1/2,1) {$b$};
\node [left] at (-1/2,0) {$c$};
\node [right] at (1/2,0) {$d$};
\end{tikzpicture}&=
\begin{tikzpicture}[baseline=3ex]
	\node (center) at (0,1/2) {};
	\draw [line width=1.5] (-1/2,-1/8)--(-1/2,1/8);
	\draw [line width=1.5] (-1/2,1-1/8)--(-1/2,1+1/8);
	\draw [line width=1.5] (1/2,-1/8)--(1/2,1/8);
	\draw [line width=1.5] (1/2,1-1/8)--(1/2,1+1/8);
	\draw (-1/2,0)--(1/2,1);
	\draw (-1/2,1)--(center);
	\draw (1/2,0) -- (center);
	\node [left] at (-1/2,1) {$a$};
	\node [right] at (1/2,1) {$b$};
	\node [left] at (-1/2,0) {$c$};
	\node [right] at (1/2,0) {$d$};
\end{tikzpicture}
&	
\begin{tikzpicture}[baseline=3ex]
	\draw [line width =1.5, ->] (-1/4,0)--(3/4,0);
	\node (center) at (1/4,1/2) {};
	\draw (0,1) -- (1/2,0);
	\draw (1/2,1) -- (center);
	\draw (center)--(0,0);
	\node [below] at (0,-.07) {$a$};
	\node [below] at (1/2,0) {$b$};
\end{tikzpicture}&=\begin{tikzpicture}[baseline=3ex]
\draw [line width =1.5, ->] (-1/4,0)--(3/4,0);
\draw (0,1)--(0,0);
\draw (1/2,1)--(1/2,0);
\draw (0,1)--(0,1/2);
\draw  (1/2,1)--(1/2,1/2);
\node [below] at (0,0) {$b$};
\node [below] at (1/2,-.07) {$a$};
\end{tikzpicture},
\end{align}

along with (unframed) Reidemeister 1, as well as Reidemeister 2 and 3 as usual.

Consequently, when $q=1,$ the tangle $T \in \Sc(\mathfrak{T})$ can then be written as a product of stated simple, noncrossing arcs $\beta_i$ so that each arc $\beta_i$ is either a corner arc of the triangle, or is an arc homotopic to an interval in the boundary of the triangle.

As shown in \cite[Section 12]{Hig23}, for each such $\beta_{i}$ we have $F_{\mathfrak{T}}(\beta_{i})=\beta_{i}^{(N)}.$ Since $F_\mathfrak{T}$ is an algebra homomorphism, we obtain that $F_{\mathfrak{T}}(T)= \prod \beta_{i}^{(N)}.$ Then as shown in \cite[Section 7.1]{Wang23}, the framed arcs $\beta_{i}^{(N)}$ satisfy the same special moves shown in (\ref{q=1 moves}). So, by applying the moves used to transform each $\alpha_i$ into $\beta_i$ in reverse order to the framed power $\beta_i^{(N)}$, we have $\prod \beta_{i}^{(N)}=\cup \alpha_i^{(N)} \in \Sq(\mathfrak{T}),$ as required.
\end{proof}

\subsection{Embedded bigons in stated skein modules}\label{Functoriality stated skein module}
We briefly describe a special case of the functoriality of stated skein modules described in \cite{BL22} for the case of embedded thickened bigons. Consider the bigon $\mathfrak{B}=[0,1] \times I.$ Suppose that $i: \mathfrak{B} \times I \rightarrow \Sigma \times I$ is a proper orientation preserving embedding of the thickened bigon so that $i(\partial \mathfrak{B} \times I) \subset \partial \Sigma \times I.$ Let $p: \Sigma \times I \rightarrow I$ be the height function of the thickened surface. We say that $i$ is boundary-compatible if

\begin{enumerate}
	\item it is height preserving on each boundary component: if for $x=0,1,$ we have that if $s,t \in I$ with $s<t$ then $p(i((x,y),s))<p(i((x,y),t))$ for every $y \in I,$ and
	\item when two boundary components of $\mathfrak{B}$ are sent to the same boundary component of $\Sigma,$ then each boundary component is sent to different heights: if $a,b$ are the boundary arcs of $\mathfrak{B}$ and $c$ is a boundary arc of $\Sigma$ such that $i(a \times I), i(b \times I) \subset c \times I,$ then we have that $p(i(a \times I)) \cap p(i(b \times I)) = \emptyset.$
\end{enumerate}

Such a boundary-compatible embedding $i$ induces a homomorphism of stated skein modules

\begin{equation*}
i_*: \Sq(\mathfrak{B} \times I) \rightarrow \Sq(\Sigma \times I).
\end{equation*}

The stated skein module homomorphism $i_*$ need not be an algebra homomorphism and it need not be injective. The construction extends easily to the case that $i: \sqcup (\mathfrak{B} \times I) \rightarrow \Sigma \times I$ is a boundary-compatible embedding of multiple thickened bigons.

\subsection{The Frobenius image of a knot diagram on a punctured surface}

Suppose $\Sigma$ is a surface so that each component has at least one puncture and also assume that no connected component of $\Sigma$ is the 1-punctured sphere or is the 2-punctured sphere. Such a surface admits an ideal triangulation. We choose an arbitrary but fixed ideal triangulation of $\Sigma$ consisting of edge set $\mathcal{E}$ of ideal arcs. So $\Sigma$ is obtained from gluing a finite disjoint union of triangles $\mathfrak{T}_i$ along edges. Let $g: \sqcup (\mathfrak{T}_i \times I) \rightarrow \Sigma \times I$ be the gluing map associated to the triangulation.

Let $K \subset \Sigma \times (-1,1)$ be a fixed representative of an isotopy class of a framed knot so that the representative satisfies $K \cap (\mathcal{E} \times I) \neq \emptyset$ and such that its points of intersection with the triangulation are transverse and vertically framed, and such that the points of intersection on the same component of $\mathcal{E} \times I$ occur at distinct heights. Thus, $K$ is in a good position for applying the splitting homomorphism to the edges $\mathcal{E}$ of the triangulation, and we also have that $K$ is cut in at least one spot when applying the splitting homomorphism.

By definition, the framed knot $K$ is supported by an annulus embedded in $\Sigma \times I$ with horizontal framing. The embedding of the annulus can be extended to an embedding of the thickened annulus $k: \mathcal{A}\times I \hookrightarrow \Sigma \times I$ such that $k \vert _{\mathcal{A} \times \{0\}}$ gives the support of $K$ and so that for each ideal arc $e \in \mathcal{E},$ $k^{-1}(e \times I)=a_e \times I \subset \mathcal{A} \times I$ for an ideal arc $a_e \subset \mathcal{A}$. We further arrange for $k$ to be height preserving on $k^{-1}( \mathcal{E} \times I).$

Consider the gluing map $g: \sqcup (\mathfrak{T}_i \times I) \rightarrow \Sigma \times I$ associated to the triangulation and consider the preimage $g^{-1}(k(\mathcal{A} \times I)).$ Since $k$ is cut by the triangulation in at least one spot, we see that the connected components of the preimage $g^{-1}(im(k))$ are thickened bigons and $g^{-1}(im(k))=\sqcup (\mathfrak{B}_j \times I).$ Thus, our construction of $k$ induces a boundary-compatible embedding $\bar{k}:\sqcup (\mathfrak{B}_j \times I) \hookrightarrow \sqcup (\mathfrak{T}_i \times I$) so that the following square commutes.

\begin{equation}\label{commutative embedding}
\begin{tikzcd}
\sqcup (\mathfrak{T}_i \times I) \arrow[r, "g"] & \Sigma \times I\\
\sqcup (\mathfrak{B}_j \times I) \arrow[r, "\bar{g}"] \arrow[u, hookrightarrow, "\bar{k}"] & \mathcal{A} \times I \arrow[u, hookrightarrow, "k"]
\end{tikzcd}
\end{equation}

See the following diagram for an illustration of the commutative square.

\resizebox{\textwidth}{!}{
	%% Creator: Inkscape 1.3.2 (091e20e, 2023-11-25, custom), www.inkscape.org
%% PDF/EPS/PS + LaTeX output extension by Johan Engelen, 2010
%% Accompanies image file '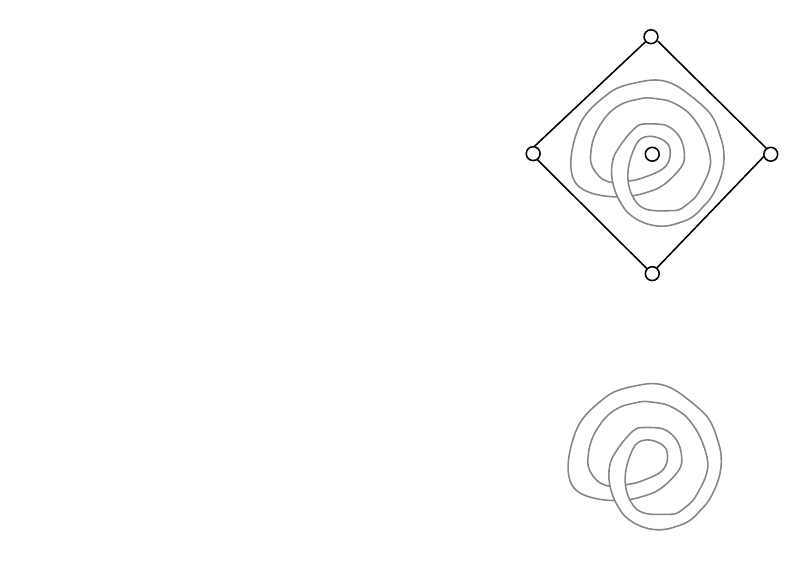' (pdf, eps, ps)
%%
%% To include the image in your LaTeX document, write
%%   \input{<filename>.pdf_tex}
%%  instead of
%%   \includegraphics{<filename>.pdf}
%% To scale the image, write
%%   \def\svgwidth{<desired width>}
%%   \input{<filename>.pdf_tex}
%%  instead of
%%   \includegraphics[width=<desired width>]{<filename>.pdf}
%%
%% Images with a different path to the parent latex file can
%% be accessed with the `import' package (which may need to be
%% installed) using
%%   \usepackage{import}
%% in the preamble, and then including the image with
%%   \import{<path to file>}{<filename>.pdf_tex}
%% Alternatively, one can specify
%%   \graphicspath{{<path to file>/}}
%% 
%% For more information, please see info/svg-inkscape on CTAN:
%%   http://tug.ctan.org/tex-archive/info/svg-inkscape
%%
\begingroup%
  \makeatletter%
  \providecommand\color[2][]{%
    \errmessage{(Inkscape) Color is used for the text in Inkscape, but the package 'color.sty' is not loaded}%
    \renewcommand\color[2][]{}%
  }%
  \providecommand\transparent[1]{%
    \errmessage{(Inkscape) Transparency is used (non-zero) for the text in Inkscape, but the package 'transparent.sty' is not loaded}%
    \renewcommand\transparent[1]{}%
  }%
  \providecommand\rotatebox[2]{#2}%
  \newcommand*\fsize{\dimexpr\f@size pt\relax}%
  \newcommand*\lineheight[1]{\fontsize{\fsize}{#1\fsize}\selectfont}%
  \ifx\svgwidth\undefined%
    \setlength{\unitlength}{378.75660333bp}%
    \ifx\svgscale\undefined%
      \relax%
    \else%
      \setlength{\unitlength}{\unitlength * \real{\svgscale}}%
    \fi%
  \else%
    \setlength{\unitlength}{\svgwidth}%
  \fi%
  \global\let\svgwidth\undefined%
  \global\let\svgscale\undefined%
  \makeatother%
  \begin{picture}(1,0.73434962)%
    \lineheight{1}%
    \setlength\tabcolsep{0pt}%
    \put(0,0){\includegraphics[width=\unitlength,page=1]{embedding.pdf}}%
    \put(0.78103341,0.31801165){\color[rgb]{0,0,0}\makebox(0,0)[lt]{\lineheight{1.25}\smash{\begin{tabular}[t]{l}$k \hookuparrow$\end{tabular}}}}%
    \put(0,0){\includegraphics[width=\unitlength,page=2]{embedding.pdf}}%
    \put(0.48395179,0.53486042){\color[rgb]{0,0,0}\makebox(0,0)[lt]{\lineheight{1.25}\smash{\begin{tabular}[t]{l}$\underset{\rightarrow}{g}$\end{tabular}}}}%
    \put(0.48554907,0.12595305){\color[rgb]{0,0,0}\makebox(0,0)[lt]{\lineheight{1.25}\smash{\begin{tabular}[t]{l}$\underset{\rightarrow}{\bar{g}}$\end{tabular}}}}%
    \put(0.13893625,0.32082309){\color[rgb]{0,0,0}\makebox(0,0)[lt]{\lineheight{1.25}\smash{\begin{tabular}[t]{l}$\bar{k} \hookuparrow$\end{tabular}}}}%
  \end{picture}%
\endgroup%

}

Recall that gluing maps $g$ and $\bar{g}$ induce splitting homomorphisms $g_*$ and $\bar{g}_*.$  If $\bar{g}$ involves the gluing of $l$ bigons into an annulus, then $\bar{g}_*$ is the map $\Delta^{(l)}$ from Corollary \ref{Frobenius annulus}. On the other hand, the embeddings $k$ and $\bar{k}$ induce homomorphisms of (stated) skein modules $k_*$ and $\bar{k}_*$ from Sections \ref{Functoriality skein module} and \ref{Functoriality stated skein module}, respectively.

\begin{lemma}\label{cutting and gluing annulus}
 We have that the following square of skein modules commutes.

\begin{equation*}
\begin{tikzcd}
\otimes \Sq(\mathfrak{T}_i \times I ) \arrow[r, leftarrow, "\Delta_\mathcal{E}"] \arrow[d, leftarrow, "\bar{k}_*"] & \Sq(\Sigma \times I) \\
\otimes \Sq(\mathfrak{B}_j \times I) \arrow[r, leftarrow, "\Delta^{(l)}"] & \Sq(\mathcal{A} \times I) \arrow[u,"k_*"]\\
\end{tikzcd}
\end{equation*}
\end{lemma}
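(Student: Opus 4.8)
The plan is to reduce the statement to a computation on a spanning set of $\Sq(\mathcal{A} \times I)$ and then to track states and heights through the commutative square of underlying thickened surfaces in \eqref{commutative embedding}. Since all four arrows are $\mathcal{R}$-linear, it suffices to verify $\Delta_{\mathcal{E}} \circ k_* = \bar{k}_* \circ \Delta^{(l)}$ on the class of a single web $W \subset \mathcal{A} \times I$, and we may choose the representative $W$ so that it meets each arc $a_e \times I$ transversely with vertical framing and with all intersection points on any given $a_e \times I$ at distinct heights; every isotopy class of webs in $\mathcal{A} \times I$ admits such a representative. Because $k$ is height-preserving on $k^{-1}(\mathcal{E} \times I) = \bigsqcup_e a_e \times I$ and carries the vertical framings there to vertical framings, the web $k(W) \subset \Sigma \times I$ is in good position for the splitting homomorphism $\Delta_{\mathcal{E}}$, and its intersection points with $\mathcal{E} \times I$ inherit the very same heights as those of $W$ with the arcs $a_e \times I$.

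Next I would unwind the two composites directly from the definition of the splitting homomorphism. For $\bar{k}_* \circ \Delta^{(l)}$: the skein $\Delta^{(l)}([W])$ is the sum over admissible states $s'$ of (the class of) a lift $(W', s')$ of $W$ along the gluing $\bar{g} : \bigsqcup (\mathfrak{B}_j \times I) \to \mathcal{A} \times I$, and applying $\bar{k}_*$ pushes each summand forward to $[\bar{k}(W', s')] \in \bigotimes_i \Sq(\mathfrak{T}_i \times I)$, where, since $\bar{k}$ is boundary-compatible, it records the states of $W'$ and their heights on each boundary arc faithfully. For $\Delta_{\mathcal{E}} \circ k_*$: the skein $\Delta_{\mathcal{E}}(k_*([W])) = \Delta_{\mathcal{E}}([k(W)])$ is the sum over admissible states of lifts $W''$ of $k(W)$ along the gluing $g : \bigsqcup (\mathfrak{T}_i \times I) \to \Sigma \times I$. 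The key geometric input is that $k(W) \subset \mathrm{im}(k)$, that $g^{-1}(\mathrm{im}(k)) = \bigsqcup (\mathfrak{B}_j \times I)$ sits inside $\bigsqcup (\mathfrak{T}_i \times I)$ precisely as the image of $\bar{k}$, and that $g \circ \bar{k} = k \circ \bar{g}$; together these force every lift $W''$ to be of the form $\bar{k}(W')$ for a unique lift $W'$ of $W$ along $\bar{g}$. It then remains to see that the states of $W''$ admissible for the $g$-splitting are exactly the images under $\bar{k}$ of the states of $W'$ admissible for the $\bar{g}$-splitting: cut arc by cut arc, the two families of created endpoints — together with their height orderings and the pairings identified under gluing — correspond under $\bar{k}$, which is guaranteed by the height-preservation of $k$ on $k^{-1}(\mathcal{E} \times I)$ and by boundary-compatibility of $\bar{k}$. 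Under this bijection the two sums agree term by term.

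I should also record the role of the standing hypotheses. Since the knot $K$ is cut by the triangulation in at least one spot, every component of $g^{-1}(\mathrm{im}(k))$ is a thickened bigon rather than a thickened annulus, so the bottom-left corner of the square is genuinely a tensor product of bigon skein algebras and $\Delta^{(l)}$ is the iterated splitting appearing in Corollary \ref{Frobenius annulus}; and because both $\Delta_{\mathcal{E}}$ and $\Delta^{(l)}$ are independent of the order in which the constituent arc-splittings are carried out (Theorem \ref{splitting}(3)), there is no ambiguity in organizing either composite. The hard part is not any single calculation but precisely the bookkeeping in the previous paragraph — matching ``admissible state for cutting $k(W)$ along $\mathcal{E}$'' with ``admissible state for cutting $W$ along the $a_e$, then pushed forward by $\bar{k}$''. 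This is exactly where the two conditions defining a boundary-compatible embedding, together with the arrangement that $k$ be height-preserving on $k^{-1}(\mathcal{E} \times I)$, are what is needed.
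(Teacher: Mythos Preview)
Your proposal is correct and follows essentially the same approach as the paper's proof: reduce to a spanning set of webs, place the web in good position for splitting, unwind both composites as state sums over lifts, and use the commutativity of the square \eqref{commutative embedding} of thickened surfaces to match the two sums. The paper is slightly terser at the last step---it observes that $\bar{k}(W_B)$ and $W_T$ are both lifts of $k(W)$ under $g$ and then invokes the well-definedness of the splitting homomorphism to conclude the two state sums coincide---whereas you spell out the bijection between lifts and admissible states explicitly; but this is a difference in level of detail rather than in strategy.
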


\begin{proof}
Since each map is an $\mathcal{R}\text{-module}$ homomorphism, we will show the square commutes by verifying that it commutes on the spanning set of classes of web diagrams in $\Sq(\mathcal{A}).$ Let $[W] \in \Sq(\mathcal{A})$ be an element represented by a web $W$ whose edges are vertically framed. Thus, $k_*([W])$ is a web in $\Sq(\Sigma)$ and we may place it in valid position for the splitting homomorphism. By definition of $\Delta^{(l)},$ we have that $\Delta^{(l)}([W])=\sum [(W_B,s)]$ is a state sum for a lift $W_B$ of $W$ satisfying $\bar{g}(W_B)=W.$ Then by definition of $\bar{k}_*,$ we have that
\begin{equation*}
\bar{k}_*\circ \Delta^{(l)}([W])=\sum_{\text{admissible } s} [(\bar{k}(W_B),s)] 	
\end{equation*}
On the other hand, let $W_T$ be a lift of $k(W)$ such that $g(W_T)=k(W).$ Then we have that
\begin{equation*}
\Delta_{\mathcal{E}} \circ  k_*([W])=\sum_{\text{admissible } t} [(W_T, t)]
\end{equation*} is a state sum over the lift. To finish our proof we must show that $\sum [(\bar{k}(W_B),s)]  =\sum [(W_T, t)].$ To this end, consider
\begin{equation*}
(g\circ\bar{k})(W_B)=(k\circ \bar{g})(W_B)=k(W),
\end{equation*} where the first equality holds by commutativity of the square (\ref{commutative embedding}). Thus, we have observed that $\bar{k}(W_B)$ and $W_T$ are both lifts of $k(W).$ By the well-definedness of the splitting homomorphism, we have that the state sums are equal as claimed, $\sum [\bar{k}(W_B),s]  =\sum [W_T, t].$
\end{proof}

Next, we assemble the ingredients we have collected so far and we consider the following diagram.

\begin{proposition}\label{commutative cube}
The following cube is commutative.

\begin{equation*}
	\begin{tikzcd}
		\displaystyle\bigotimes_{i=1}^l\Sc(\mathfrak{B}) \arrow[rrr, "\bar{k}_*"] \arrow[ddd,"F_\mathfrak{B}^{\otimes l}"] \arrow[dr,leftarrow, "\Delta^{(l)}"] & & & \displaystyle\bigotimes_{i}\Sc(\mathfrak{T}) \arrow[dl,leftarrow, "\Delta_{\mathcal{E}}"] \arrow [ddd, "\otimes F_{\mathfrak{T}}"]\\
		& \Sc(\mathcal{A}) \arrow[r, "k_*"] \arrow[d, "F_\mathcal{A}"] & \Sc(\Sigma) \arrow[d, "F_\Sigma"] & \\
		& \Sq(\mathcal{A}) \arrow[r, "k_*"] \arrow[dl, "\Delta^{(l)}"] & \Sq(\Sigma) \arrow[dr, "\Delta_{\mathcal{E}}"] & \\
		\displaystyle\bigotimes_{i=1}^l\Sq(\mathfrak{B}) \arrow[rrr, "\bar{k}_*"] & & & \displaystyle\bigotimes_i \Sq(\mathfrak{T}) 
	\end{tikzcd}
\end{equation*}
\end{proposition}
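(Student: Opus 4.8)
The plan is to verify that the cube commutes by checking each of its six square faces: four are immediate from results already established, one (the back face) is a short topological observation, and the last (the front face $F_\Sigma\circ k_*=k_*\circ F_{\mathcal{A}}$, which is the statement we really want) is then forced by injectivity of a splitting homomorphism.

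First I would dispose of the four faces that are already in hand. The top and bottom faces are the commuting square of Lemma~\ref{cutting and gluing annulus}, whose proof is valid at any specialization of $q$, read at $q=1$ and at the chosen root of unity respectively; thus $\bar{k}_*\circ\Delta^{(l)}=\Delta_{\mathcal{E}}\circ k_*$ holds at both levels. The left face, comparing $\Delta^{(l)}\circ F_{\mathcal{A}}$ with $(F_{\mathfrak{B}}^{\otimes l})\circ\Delta^{(l)}$ as maps $\Sc(\mathcal{A})\to\bigotimes\Sq(\mathfrak{B})$, is precisely the identity of Corollary~\ref{Frobenius annulus}; this is the step through which the miraculous cancellations of Corollary~\ref{miraculous cancellations} enter the argument. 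The right face, comparing $\Delta_{\mathcal{E}}\circ F_\Sigma$ with $(\otimes_i F_{\mathfrak{T}_i})\circ\Delta_{\mathcal{E}}$, commutes by the very definition of $F_\Sigma$ in Definition~\ref{Frobenius definition}.

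Next I would treat the back face, the square $\bigotimes\Sc(\mathfrak{B})\to\bigotimes\Sq(\mathfrak{T})$ comparing $(\otimes_i F_{\mathfrak{T}_i})\circ\bar{k}_*$ with $\bar{k}_*\circ F_{\mathfrak{B}}^{\otimes l}$. Here the idea is that $\bar{k}_*$, being induced by a boundary-compatible embedding of thickened bigons (Section~\ref{Functoriality stated skein module}), carries a stated arc to a stated arc and, more generally, commutes with the operation of taking $N$ framed parallel copies of a stated arc; meanwhile $F_{\mathfrak{B}}$ sends a stated arc to its $N$-th framed power (Section~\ref{Frobenius bigon}) and $F_{\mathfrak{T}_i}$ sends a disjoint union of stated arcs to the union of their $N$-th framed powers (Proposition~\ref{Frobenius triangle}). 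Consequently the two composites agree on any tensor of unions of stated arcs. The subtlety is that $\bar{k}_*$ is only $\mathcal{R}$-linear, not an algebra map, so one cannot reduce to algebra generators; instead I would use that $\bigotimes\Sc(\mathfrak{B})$ is spanned by tensor products of monomials in the left-to-right stated arcs, and that at $q=1$ such a monomial is represented by an embedded disjoint union of stated arcs (the moves~\eqref{q=1 moves} rendering crossings and heights immaterial), so that the framed-power descriptions and the functoriality of $\bar{k}_*$ apply verbatim and force the back square to commute on this spanning set. No knot components ever arise, so Proposition~\ref{Frobenius triangle} genuinely applies.

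Finally I would deduce the front face. It does \emph{not} follow from functoriality of framed powers, since $F_{\mathcal{A}}$ carries $l_+$ to the polynomial $P^{(N)}(l_+,l_-)$ rather than to a framed power; instead I would post-compose both composites $\Sc(\mathcal{A})\to\Sq(\Sigma)$ with $\Delta_{\mathcal{E}}\colon\Sq(\Sigma)\to\bigotimes\Sq(\mathfrak{T})$ and chase around the cube using the five faces already verified:
\begin{align*}
\Delta_{\mathcal{E}}\,F_\Sigma\,k_* &\overset{(\mathrm{right})}{=}(\otimes_i F_{\mathfrak{T}_i})\,\Delta_{\mathcal{E}}\,k_*\overset{(\mathrm{top})}{=}(\otimes_i F_{\mathfrak{T}_i})\,\bar{k}_*\,\Delta^{(l)}\\
&\overset{(\mathrm{back})}{=}\bar{k}_*\,F_{\mathfrak{B}}^{\otimes l}\,\Delta^{(l)}\overset{(\mathrm{left})}{=}\bar{k}_*\,\Delta^{(l)}\,F_{\mathcal{A}}\overset{(\mathrm{bottom})}{=}\Delta_{\mathcal{E}}\,k_*\,F_{\mathcal{A}}.
\end{align*}
Since $\Delta_{\mathcal{E}}$ is a composition of the splitting homomorphisms $\Delta_c$ along the ideal arcs $c\in\mathcal{E}$, each of which is injective by Theorem~\ref{splitting}(2), the map $\Delta_{\mathcal{E}}$ is injective, whence $F_\Sigma\circ k_*=k_*\circ F_{\mathcal{A}}$. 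With all six faces verified, the cube commutes. The step I expect to be the main obstacle is the back face --- precisely the bookkeeping of framings and boundary heights under the non-multiplicative map $\bar{k}_*$ --- but this is exactly what Proposition~\ref{Frobenius triangle} and the moves~\eqref{q=1 moves} are designed to control.
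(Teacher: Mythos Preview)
Your proof is correct and follows essentially the same approach as the paper: you verify the same five faces (top, bottom, left, right, and outer/back) by the same references, and then deduce the center/front face by the identical diagram chase using injectivity of $\Delta_{\mathcal{E}}$. Your treatment of the back face is in fact more detailed than the paper's, which simply cites Proposition~\ref{Frobenius triangle} together with the definitions of $F_{\mathfrak{B}}$ and $\bar{k}$ without spelling out the spanning-set argument.
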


\begin{proof}
There are $6$ faces to check, which we will call the top, bottom, left, right, outer, and center. The top and bottom faces commute by Lemma \ref{cutting and gluing annulus}, which holds for any specialization of $q.$ The right face commutes by Definition \ref{Frobenius definition} of $F_\Sigma$. The left face commutes due to Corollary \ref{Frobenius annulus}. The outer face commutes due to Proposition \ref{Frobenius triangle} along with the definition of $F_{\mathfrak{B}}$ from Section \ref{Frobenius bigon} paired with the definition of $\bar{k}.$

The last thing we need to show is the commutativity of the center face. We must show that $k_* \circ F_{\mathcal {A}}= F_\Sigma \circ k_*.$ By the injectivity of $\Delta_{\mathcal{E}},$ it suffices to show that $\Delta_{\mathcal{E}} \circ k_* \circ F_{\mathcal {A}}= \Delta_{\mathcal{E}} \circ F_\Sigma \circ k_*.$ We will do this by using the commutativity of the other squares in the following way

\begin{align*}
\Delta_{\mathcal{E}} \circ k_* \circ F_{\mathcal {A}}&=\bar{k}_* \circ \Delta^{(l)} \circ F_{\mathcal{A}} & \text{(bottom face)}\\
&=\bar{k}_* \circ F_{\mathfrak{B}}^{\otimes l} \circ \Delta^{(l)} & \text{(left face)}\\
&=\otimes F_{\mathfrak{T}} \circ \bar{k}_* \circ \Delta^{(l)} & \text{(outer face)}\\
&=\otimes F_{\mathfrak{T}} \circ \Delta_{\mathcal{E}} \circ k_* & \text{(top face)}\\
&=\Delta_{\mathcal{E}} \circ F_{\Sigma} \circ k_* & \text{(right face)},
\end{align*}
as required.
\end{proof}

The previous proposition gives us the tool to prove the following.

\begin{theorem}\label{Frobenius knot}
Suppose that $\Sigma$ is an ideal triangulable surface and that $q^{1/3}$ is a root of unity of order coprime to $6,$ then the image of $F_\Sigma$ on an element represented by an oriented framed knot $K \subset \Sigma \times I$ is the element $K^{[P^{(N)}]}$ obtained by the threading the power sum polynomial $P^{(N)}$ along $K.$
\end{theorem}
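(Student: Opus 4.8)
The strategy is to reduce the computation to the case of a core loop in the annulus, which has already been handled, by exploiting that $K$ is carried by an embedded thickened annulus. The two essential inputs are the commutative cube of Proposition \ref{commutative cube} and the formula $F_{\mathcal{A}}(l_+) = P^{(N)}(l_+, l_-)$ of Proposition \ref{Frobenius loop}.

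First I would arrange $K$ to be in the good position for the chosen triangulation, as in the construction preceding Lemma \ref{cutting and gluing annulus}. Since $F_\Sigma$ is defined on isotopy classes, I may isotope $K$ inside $\Sigma \times I$ so that it meets $\mathcal{E} \times I$ transversely, with vertical framing, at points of pairwise distinct heights, and in at least one point; if $K$ were disjoint from $\mathcal{E} \times I$ it would be contained in a single thickened triangle, hence in a ball, and could be pushed across an edge. This produces, exactly as in that construction, a support annulus of $K$ together with an embedding $k : \mathcal{A} \times I \hookrightarrow \Sigma \times I$ having the properties used in Proposition \ref{commutative cube}. I fix the identification of the core of $\mathcal{A}$ with $l_+ \in \Sc(\mathcal{A})$ so that the orientation of $l_+$ matches that of $K$; then $[K] = k_*(l_+)$ in $\Sc(\Sigma)$, where $k_*$ is the homomorphism of skein modules induced by the embedding (Section \ref{Functoriality skein module}).

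Next I would invoke the commutativity of the center face of the cube in Proposition \ref{commutative cube}, namely $F_\Sigma \circ k_* = k_* \circ F_{\mathcal{A}}$, together with Proposition \ref{Frobenius loop}, to get
\begin{equation*}
F_\Sigma([K]) \;=\; F_\Sigma\bigl(k_*(l_+)\bigr) \;=\; k_*\bigl(F_{\mathcal{A}}(l_+)\bigr) \;=\; k_*\bigl(P^{(N)}(l_+, l_-)\bigr).
\end{equation*}
It then remains to identify $k_*\bigl(P^{(N)}(l_+, l_-)\bigr)$ with the threading $K^{[P^{(N)}]}$. Expanding $P^{(N)}(l_+, l_-) = \sum_{i,j} a_{ij}\, l_+^{\,i} l_-^{\,j}$ in $\Sc(\mathcal{A}) = \mathcal{R}[l_+, l_-]$ and using that $k_*$ is $\mathcal{R}$-linear, it suffices to check that $k_*(l_+^{\,i} l_-^{\,j})$ is the class of $K^{[e_1^{i} e_2^{j}]}$: the element $l_+^{\,i} l_-^{\,j}$ is represented in $\mathcal{A} \times I$ by $i+j$ pairwise disjoint parallel copies of the core, $i$ of them positively and $j$ of them negatively oriented, and $k$ carries this family to $i+j$ parallel copies of $K$ in the framing direction with the corresponding orientations, which is exactly the definition of $K^{[e_1^{i} e_2^{j}]}$. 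Summing over $i,j$ then gives $F_\Sigma([K]) = [K^{[P^{(N)}]}]$.

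I expect the only genuinely delicate point to be this last identification: one must match the stacking convention defining the product in $\Sc(\mathcal{A})$ with the framing-direction parallelism used to define threading, under the embedding $k$. This is handled by recalling that the support annulus of $K$ is built with horizontal framing, so that $k$ sends the transverse push-off direction of $\mathcal{A} \times I$ to the framing direction of $K$, together with the standard fact that the skein class of a parallel family of a framed knot is independent of the transverse direction in which the copies are pushed off. Everything else in the argument is a direct assembly of results already established in this section.
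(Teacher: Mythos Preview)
Your proposal is correct and follows essentially the same route as the paper: isotope $K$ into good position, realize it as $k_*(l_+)$ for an embedded thickened annulus, apply the center face of the commutative cube (Proposition \ref{commutative cube}) together with Proposition \ref{Frobenius loop}, and then identify $k_*\bigl(P^{(N)}(l_+,l_-)\bigr)$ with $K^{[P^{(N)}]}$ via the observation that monomials in $l_\pm$ are represented by parallel copies in the framing direction. Your write-up is in fact slightly more careful than the paper's on the last identification and on ensuring $K$ meets the triangulation nontrivially.
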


\begin{proof}
Pick an ideal triangulation $\mathcal{E}$ of $\Sigma$ and isotope $K$ to give it the vertical framing and so that $K \cap (\mathcal{E} \times I) \neq \emptyset$ and so it intersects the triangulation transversely. Let $k: \mathcal{A} \times I \hookrightarrow \Sigma \times I$ be an embedding of a thickened annulus chosen so that the image of $k$ is horizontal and so that $k$ sends a vertically framed core loop of the annulus to $K$: $k(l_+)=K.$ We can then apply Proposition \ref{commutative cube}, focusing on the center square of the diagram. We have that in $\Sc(\Sigma)$ we have $[K]=k_*([l_+]).$ We see that
\begin{align*}
F_\Sigma(K)&=F_\Sigma(k_*(l_+))\\
&=k_*F_\mathcal{A}(l_+)\\
&=k_*P^{(N)}(l_+,l_-)\\
&=K^{[P^{(N)}]},  	
\end{align*} where the second equality comes from the commutativity of the center square in Proposition \ref{commutative cube}, the third inequality comes from Corollary \ref{Frobenius annulus}, the final inequality comes from the fact that powers of the core loop in the skein algebra of the annulus correspond to taking copies in the direction of the framing.
\end{proof}

\subsection{The Frobenius image of a link}

We now extend the previous result to the case of multiple knot components.

\begin{corollary}\label{Frobenius link}
Assume the same hypotheses as Theorem \ref{Frobenius knot}. Then if $L \subset \Sigma \times I$ is a framed link, then the image $F_{\Sigma}(L)$ is obtained by threading the power sum polynomial $P^{(N)}$ along each component of $L.$
\end{corollary}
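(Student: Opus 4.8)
The strategy is to reduce to the single-component case of Theorem \ref{Frobenius knot} by running the annulus argument of Proposition \ref{commutative cube} simultaneously on all components of $L$. Fix a representative $L = K_1 \cup \dots \cup K_k$. Because the $K_i$ are pairwise disjoint they have pairwise disjoint tubular neighborhoods, inside which the framings single out $k$ pairwise disjoint embedded framing annuli; thickening these produces a single embedding $k \colon \bigsqcup_{i=1}^{k}(\mathcal{A}_i \times I) \hookrightarrow \Sigma \times I$ which, positioned as in the proof of Theorem \ref{Frobenius knot}, carries a core loop of $\mathcal{A}_i$ onto $K_i$. Choosing the positive direction on each $\mathcal{A}_i$ compatibly with the orientation of $K_i$ and using $\Sc\big(\bigsqcup_i \mathcal{A}_i\big) \cong \bigotimes_i \Sc(\mathcal{A}_i)$, we then have $[L] = k_*\big(\bigotimes_{i} l_+^{(i)}\big)$ in $\Sc(\Sigma)$, where $l_+^{(i)}$ denotes the positive core loop of $\mathcal{A}_i$.

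The main step is to upgrade Proposition \ref{commutative cube} so that the annulus appearing there is replaced by the disjoint union $\mathcal{A}^\sqcup := \bigsqcup_i \mathcal{A}_i$ (with the corresponding disjoint unions of bigons and triangles at the other corners of the cube). After isotoping $L$ into good position for a fixed ideal triangulation $\mathcal{E}$ of $\Sigma$ so that each component $K_i$ meets, and is cut by, $\mathcal{E} \times I$, the preimage of $\operatorname{im}(k)$ under the triangulation gluing map is again a disjoint union of thickened bigons, so the commutative square (\ref{commutative embedding}) and Lemma \ref{cutting and gluing annulus} hold verbatim for $\mathcal{A}^\sqcup$. The remaining faces of the cube also extend: Corollary \ref{Frobenius annulus}, the defining diagram of $F_\Sigma$ (Definition \ref{Frobenius definition}), Proposition \ref{Frobenius triangle}, and the definition of $F_{\mathfrak{B}}$ are each compatible with the decomposition of skein algebras as tensor products over connected components, so every face becomes the tensor product over the $\mathcal{A}_i$ of its connected counterpart, where on each annulus one uses the Frobenius map $F_{\mathcal{A}_i}$ of Proposition \ref{Frobenius loop}. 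The center face of the resulting cube then gives $F_\Sigma \circ k_* = k_* \circ \big(\bigotimes_i F_{\mathcal{A}_i}\big)$.

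Putting these together and applying Proposition \ref{Frobenius loop} on each factor,
\begin{equation*}
F_\Sigma(L) = F_\Sigma\Big(k_*\big(\textstyle\bigotimes_i l_+^{(i)}\big)\Big) = k_*\Big(\textstyle\bigotimes_i F_{\mathcal{A}_i}\big(l_+^{(i)}\big)\Big) = k_*\Big(\textstyle\bigotimes_i P^{(N)}\big(l_+^{(i)}, l_-^{(i)}\big)\Big).
\end{equation*}
Writing $P^{(N)} = \sum_{j,m} a_{jm} e_1^j e_2^m$ and multiplying out the tensor product, $k_*$ sends the summand $\bigotimes_i (l_+^{(i)})^{j_i}(l_-^{(i)})^{m_i}$ --- which inside $\mathcal{A}_i$ is $j_i$ parallel copies of the core carrying the orientation of $K_i$ together with $m_i$ parallel copies carrying the reverse --- onto the corresponding framed parallels of the $K_i$; summed with coefficients $\prod_i a_{j_i m_i}$ this is exactly the multilinear threading $L^{[P^{(N)}]}$. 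Hence $F_\Sigma(L) = L^{[P^{(N)}]}$.

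The part demanding genuine care beyond Theorem \ref{Frobenius knot} is the disconnected version of the cube: verifying that the supporting annuli can be chosen pairwise disjoint and simultaneously isotoped into good position for the triangulation (so that $\operatorname{im}(k)$ really pulls back to bigons rather than to anything containing an uncut annulus), and checking that each face of the cube is consistent with the monoidal bookkeeping over components. Once this setup is in place every face reduces to its connected counterpart tensored over the components, so the corollary follows essentially formally.
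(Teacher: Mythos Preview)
Your argument is correct, but it takes a different route from the paper. The paper's proof is more algebraic: at $q=1$ crossing changes are free, so one isotopes the components $K_i$ through one another into disjoint height slabs, writing $L = \prod_i K'_i$ as an honest product in $\Sc(\Sigma)$. Since $F_\Sigma$ is an algebra homomorphism, $F_\Sigma(L) = \prod_i F_\Sigma(K'_i) = \prod_i (K'_i)^{[P^{(N)}]}$ by the single-knot Theorem~\ref{Frobenius knot}. Finally, the transparency result of \cite{BH23} lets one pass the threaded components back through each other in $\Sq(\Sigma)$ to recover $L^{[P^{(N)}]}$.

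Your approach instead runs the entire annulus-cube of Proposition~\ref{commutative cube} in parallel over a disjoint union $\bigsqcup_i \mathcal{A}_i$, using the monoidal decomposition of skein algebras over connected components. This is a legitimate alternative: the cube does factor as a tensor product of its connected pieces, and the verification you outline goes through. The trade-off is that the paper's argument is shorter and more modular, reducing immediately to the knot case plus the already-available transparency from \cite{BH23}, whereas your argument is more self-contained (it does not invoke transparency) at the cost of re-checking the cube in the disconnected setting.
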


\begin{proof}
Consider the link $L \subset \Sigma \times I$ with $L=\sqcup_{i=1}^l K_i$ for disjoint oriented framed knots $K_i \subset \Sigma \times I.$ When $q=1,$ the skein theory allows crossing changes and $L$ can be realized as a product of knots
\begin{equation*}
L=\prod_{i=1}^l K_i' \in \Sc(\Sigma)
\end{equation*} by isotoping each $K_i$, passing through any other knot component to arrive at $K_i' \subset \Sigma \times (\frac{i-1}{l},\frac{i}{l}).$  Since $F_\Sigma$ is an algebra homomorphism, we have that $F_\Sigma(L)= \prod_{i=1}^l F_\Sigma(K_i').$ By Theorem \ref{Frobenius knot}, $F_\Sigma(K_i')=K_i'^{[P^{(N)}]}.$ By the result of \cite[Theorem 1]{BH23}, each $K_i'^{[P^{(N)}]}$ is transparent in $\Sq(\Sigma)$ and so the elements can be passed through each other to obtain $\prod_{i=1}^l K_i'^{[P^{(N)}]}=L^{[P^{(N)}]} \in \Sq(\Sigma).$ We then conclude that $F_\Sigma(L)= L^{[P^{(N)}]}$ as claimed.
\end{proof}

\subsection{The Frobenius homomorphism for closed surfaces}

\begin{theorem}\label{Frobenius surface}
Suppose that $q^{1/3}$ is a root of unity of order $N$ coprime to $6.$ For any surface $\Sigma$, including a closed surface, there is an algebra homomorphism $F_{\Sigma}: \Sc(\Sigma) \rightarrow \Sq(\Sigma)$ such that the image of any link $L \subset \Sigma \times I$ is given by the threading of the power sum polynomial $P^{(N)}$ along each component of $L,$ \begin{equation*}
F_\Sigma(L)=L^{[P^{(N)}]}.
\end{equation*}
Furthermore, the image of $F_\Sigma$ is contained in the center of $\Sq(\Sigma).$
\end{theorem}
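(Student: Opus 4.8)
The strategy is to reduce everything to the ideal triangulable case, which is already settled by Corollary~\ref{Frobenius link} (existence of $F_\Sigma$ and its values on links) together with Theorem~\ref{Frobenius triangulation} (centrality). Assume first that $\Sigma$ is closed of positive genus, and fix a point $p\in\Sigma$; then $\Sigma^{*}:=\Sigma\setminus\{p\}$ is a punctured surface admitting an ideal triangulation, so $F_{\Sigma^{*}}$ is defined and $F_{\Sigma^{*}}(L)=L^{[P^{(N)}]}$ for every link $L\subset\Sigma^{*}\times I$. The inclusion $\iota\colon\Sigma^{*}\times I\hookrightarrow\Sigma\times I$ induces algebra homomorphisms $\iota_{*}\colon\Sc(\Sigma^{*})\to\Sc(\Sigma)$ and $\iota_{*}\colon\Sq(\Sigma^{*})\to\Sq(\Sigma)$, and the first is surjective, since any web in $\Sigma\times I$ can be put in general position with respect to the arc $\{p\}\times I$ (a $1$-dimensional set in a $3$-manifold) and hence isotoped off it into $\Sigma^{*}\times I$. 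I would then \emph{define} $F_{\Sigma}$ as the unique $\mathcal{R}$-linear map with $F_{\Sigma}\circ\iota_{*}=\iota_{*}\circ F_{\Sigma^{*}}$; once it exists it is automatically an algebra homomorphism, because $\iota_{*}$ is a surjective algebra homomorphism and $\iota_{*}\circ F_{\Sigma^{*}}$ is one, and it satisfies $F_{\Sigma}(L)=L^{[P^{(N)}]}$ by lifting $L$ to $\Sigma^{*}\times I$ and using naturality of threading under inclusions.

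The crux is thus to show that $\iota_{*}\circ F_{\Sigma^{*}}$ annihilates $\ker\bigl(\iota_{*}\colon\Sc(\Sigma^{*})\to\Sc(\Sigma)\bigr)$. A general position argument applied to isotopies shows that any isotopy inside $\Sigma\times I$ between webs lying in $\Sigma^{*}\times I$ factors, up to isotopies taking place within $\Sigma^{*}\times I$, as a finite composition of \emph{elementary puncture crossings}, in which a single strand of the web is pushed across $\{p\}\times I$ exactly once; since isotopies inside $\Sigma^{*}\times I$ act trivially in $\Sc(\Sigma^{*})$, it follows that $\ker(\iota_{*})$ is generated as an $\mathcal{R}$-module by differences $[W]-[W']$ where $W'$ arises from $W$ by one such crossing. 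For such a pair, $W$ and $W'$ differ only inside a small ball $B$ about the crossing point, and $B$ may be chosen to meet $W$ in a single vertex-free arc. Resolving the finitely many trivalent vertices of $W$ (all of which lie outside $B$) via the crossing relations writes $[W]=\sum_{j}c_{j}[L_{j}]$ in $\Sc(\Sigma^{*})$ with each $L_{j}$ a link; performing the \emph{identical} resolution on $W'$ gives $[W']=\sum_{j}c_{j}[L_{j}']$ with the same coefficients, where $L_{j}'$ differs from $L_{j}$ only inside $B$, hence by a single puncture crossing. A puncture crossing is an isotopy inside $\Sigma\times I$, so $\iota(L_{j})$ and $\iota(L_{j}')$ are isotopic there, whence $\iota(L_{j})^{[P^{(N)}]}=\iota(L_{j}')^{[P^{(N)}]}$ in $\Sq(\Sigma)$. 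Applying $\iota_{*}\circ F_{\Sigma^{*}}$ to $[W]-[W']$ and using $F_{\Sigma^{*}}(L_{j})=L_{j}^{[P^{(N)}]}$ gives $\sum_{j}c_{j}\bigl(\iota(L_{j})^{[P^{(N)}]}-\iota(L_{j}')^{[P^{(N)}]}\bigr)=0$, as required, completing the closed positive-genus case.

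It remains to treat the surfaces admitting no ideal triangulation that were not already covered: the twice-punctured sphere $\mathcal{A}$, and the small surfaces (sphere, once-punctured sphere, monogon, bigon) whose span of links is $\mathcal{R}$. For $\mathcal{A}$ one already has $F_{\mathcal{A}}$ from the square~\eqref{commutative annulus} and Proposition~\ref{Frobenius loop}; the identity $F_{\mathcal{A}}(L)=L^{[P^{(N)}]}$ for an arbitrary link follows either from the injectivity of $\Delta_{a}\colon\Sq(\mathcal{A})\hookrightarrow\Oq$ together with Corollary~\ref{miraculous cancellations}, after checking that $\Delta_{a}$ carries $L^{[P^{(N)}]}$ to the corresponding polynomial in $\sigma_{1},\sigma_{2}$, or by embedding $\mathcal{A}$ essentially into a triangulable surface and invoking Corollary~\ref{Frobenius link} there (in the style of Proposition~\ref{commutative cube}) together with injectivity of the induced map. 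For each of the small surfaces one compares with the annulus via an injective inclusion, which sends each core loop $l_{\pm}$ to a contractible loop: $\iota_{*}\circ F_{\mathcal{A}}$ descends through $\iota_{*}\colon\mathcal{R}[l_{+},l_{-}]\to\mathcal{R}$ precisely because $P^{(N)}(q^{2}+1+q^{-2},\,q^{2}+1+q^{-2})=3$. This identity is immediate from Definition~\ref{power sum} on specializing $(\lambda_{1},\lambda_{2},\lambda_{3})=(q^{2},1,q^{-2})$: then $\lambda_{1}\lambda_{2}\lambda_{3}=1$, both $E_{1}$ and $E_{2}$ equal $q^{2}+1+q^{-2}$, and $\lambda_{1}^{N}+\lambda_{2}^{N}+\lambda_{3}^{N}=q^{2N}+1+q^{-2N}=3$ because $q$ has order $N$. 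Disjoint unions of surfaces are handled by the tensor decomposition of skein modules.

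Finally, centrality is uniform over all cases: by $\mathcal{R}$-linearity the image of $F_{\Sigma}$ is the $\mathcal{R}$-span of the threaded links $L^{[P^{(N)}]}$, and each of these is transparent in $\Sq(\Sigma\times I)$ by the main theorem of \cite{BH23}; since the algebra product is given by vertical stacking, a transparent element may be slid past any other skein, so each $L^{[P^{(N)}]}$, and hence the submodule they span, lies in $Z(\Sq(\Sigma))$. The step I expect to be the main obstacle is the kernel lemma of the second paragraph — both the general position reduction of isotopies to elementary puncture crossings, and, the point that keeps the argument from being circular, the insistence on resolving the web into links \emph{away from the puncture}, so that the two resulting families of links are related by honest isotopies in $\Sigma\times I$ and the isotopy invariance of threading can be brought to bear.
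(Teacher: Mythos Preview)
Your proof is correct and follows essentially the same approach as the paper: both arguments descend the Frobenius map through the puncture-filling quotient by resolving a web into links \emph{away from the puncture}, so that the two resulting families of links differ only by sweep moves (your ``elementary puncture crossings'') and hence have equal threadings in $\Sq(\Sigma)$. The only organizational difference is that the paper phrases this as a single inductive step $\Sigma_{g,p+1}\to\Sigma_{g,p}$, which uniformly covers the sphere, once-punctured sphere, and annulus without your separate treatment of small surfaces; your handling of these cases is correct but unnecessary, and the monogon and bigon need not be addressed here since the theorem concerns surfaces without boundary.
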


\begin{proof}

We will prove this by showing that the threading map respects the process of filling in a puncture. We will show that if the Theorem \ref{Frobenius surface} holds in the case of $\Sigma_{g,p+1}$, for a genus $g$ surface with $p+1$ punctures, then it holds in the case of $\Sigma_{g,p}$ as well. This will suffice since for any $g\geq0$ there exists $p\geq 0$ such that $\Sigma_{g,p+1}$ is ideal triangulable and thus satisfies Theorem \ref{Frobenius link}.

Let $x$ be a distinguished puncture of $\Sigma_{g,p+1}$ so that the remaining $p$ punctures are the same as the punctures of $\Sigma_{g,p}.$ We have that $\Sq(\Sigma_{g,p})$ is the quotient of $\Sq(\Sigma_{g,p+1})$ by the ideal generated by the additional relation that allows one to sweep a strand (of any orientation) over the distinguished puncture $x$ for $\Sigma_{g,p+1}$:

\begin{equation*}
\begin{tikzpicture}[baseline=3ex]
	%\draw [dashed] (0,1/2) circle [radius=1/2];
	\draw (0,1) to [out =-45, in =90] (1/4,1/2) to [out=-90=, in=45] (0,0);
	\filldraw [fill=white, draw=black] (0,1/2) circle [radius=.05];
\end{tikzpicture}=
\begin{tikzpicture}[baseline=3ex]
%\draw [dashed] (0,1/2) circle [radius=1/2];
\draw (0,1) to [out=-135, in =90] (-1/4,1/2) to [out=-90, in=135] (0,0) ;
\filldraw [fill=white, draw=black] (0,1/2) circle [radius=.05];
\end{tikzpicture}
\end{equation*}

By our assumption, we have an algebra homomorphism $F_{\Sigma_{g,p+1}}: \Sc(\Sigma_{g,p+1}) \rightarrow \Sq(\Sigma_{g,p+1})$ defined by threading $P^{(N)}$ along components of links. In order to show  that this homomorphism induces a homomorphism on $F_{\Sigma_{g,p}}$ we need to show that the map respects the sweep move.

Suppose that $W$ and $W'$ are two webs in $\Sc(\Sigma_{g,p+1})$ which differ only by a sweep move. We can obtain the image of $F_{\Sigma_{g,p+1}}$ on these webs by rewriting $W$ and $W'$ as links in any way we choose, since we have assumed $F_{\Sigma_{g,p+1}}$ is well-defined. Since $W$ and $W'$ are identical outside of a neighborhood of the puncture $x,$ we choose to rewrite $W=\sum b_i L_i$ and $W'=\sum b_i L_i'$ where $L_i$ and $L_i'$ are links which differ only by a sweep move. This is possible since rewriting a web as a linear combination of links can be done by repeatedly applying the move (\ref{2gon}) in the form

\begin{equation*}
\begin{tikzpicture}[baseline=3ex]
	\draw [<-] (0,0)--(1/4,1/4);
	\draw [<-] (1/2,0)--(1/4,1/4);
	\draw (1/4,1/4)--(1/4,3/4);
	\draw [-<] (1/4,3/4)--(1/4,1/2);
	\draw (1/4,3/4)--(0,1);
	\draw (1/4,3/4)--(1/2,1);
	\draw [->] (0,1)--(1/8,7/8);
	\draw [->] (1/2,1)--(3/8,7/8);
\end{tikzpicture}=
\begin{tikzpicture}[baseline=3ex]
\node (center) at (1/4,1/2) {};
\draw[->] (1/2,1) -- (0,0);
\draw (0,1) -- (center);
\draw [->] (center)--(1/2,0);
\end{tikzpicture}
-\begin{tikzpicture}[baseline=3ex]
\draw [->] (0,1)--(0,0);
\draw [->] (1/2,1)--(1/2,0);
\end{tikzpicture}, 
\end{equation*}

along edges of the web, which can be chosen while keeping fixed the edge involved in the sweep move.

Then $F_{\Sigma_{g,p+1}}(W)=\sum b_{i} L_i^{[P^{(N)}]}$ and $F_{\Sigma_{g,p+1}}(W')=\sum b_{i} L_i'^{[P^{(N)}]}.$ Since these two elements are related to each other by repeated applications of sweep moves, we see that $F_{\Sigma_{g,p+1}}$ descends to an algebra map $F_{\Sigma_{g,p}}: \Sc(\Sigma_{g,p}) \rightarrow \Sq(\Sigma_{g,p})$ which is defined as claimed.

Since $\Sq(\Sigma_{g,p})$ is an algebra quotient of $\Sq(\Sigma_{g,p+1})$ and $F_{\Sigma_{g,p+1}}$ has central image, we obtain that $F_{\Sigma_{g,p}}$ has central image as well. Alternatively, the fact that the threading operation produces transparent elements was shown in \cite[Theorem 1]{BH23}.

\end{proof}

\subsection{The Frobenius homomorphism for skein modules of 3-manifolds}

\begin{theorem}\label{Frobenius manifold}
Suppose that $q^{1/3}$ is a root of unity of order $N$ coprime to $6.$ For any oriented 3-manifold $M$, there is a homomorphism of skein modules $F_{M}: \Sc(M) \rightarrow \Sq(M)$ such that that the image of any oriented framed link $L \subset M$ is given by the threading of the power sum polynomial $P^{(N)}$ along each component of $L,$ \begin{equation*}
	F_M(L)=L^{[P^{(N)}]}.
\end{equation*}

Furthermore, the image of $F_M$ is contained in the submodule of transparent elements of $\Sq(M).$
\end{theorem}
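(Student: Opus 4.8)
The plan is to bootstrap from the surface case, Theorem~\ref{Frobenius surface}, by means of the functoriality of skein modules from Section~\ref{Functoriality skein module}, using that every link in $M$ and every relation among such links is confined to an embedded thickened surface. As recalled above, relations~(\ref{2gon}) and~(\ref{crossing}) show that isotopy classes of oriented framed links span $\Sc(M)$; I would therefore define $F_M$ by declaring $F_M(L) = L^{[P^{(N)}]}$ for each link $L$ and extending $\mathcal{R}$-linearly, so that the content of the theorem is that this assignment is well defined on $\Sc(M)$ and lands in the transparent submodule.

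For well-definedness, suppose $\sum_i a_i L_i = 0$ in $\Sc(M)$. By definition of the skein module this identity is witnessed, in the free $\mathcal{R}$-module on webs, by finitely many instances of the defining relations~(\ref{2gon})--(\ref{loop}) at $q=1$, each supported in a small ball and involving only finitely many webs. Let $X \subset \mathrm{int}(M)$ be the union of those balls together with all webs and links $L_i$ occurring; being a finite union of $3$-balls and embedded graphs, $X$ is contained in an open subset $U$ of $\mathrm{int}(M)$ of the form $U \cong \Sigma \times I$ for a surface $\Sigma$ (for instance $U$ may be taken to be the interior of a handlebody regular neighborhood of $X$). All the witnessing relations take place inside $U$, so in fact $\sum_i a_i L_i = 0$ already holds in $\Sc(U) \cong \Sc(\Sigma)$. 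By Theorem~\ref{Frobenius surface} there is a homomorphism $F_\Sigma \colon \Sc(\Sigma) \to \Sq(\Sigma)$ with $F_\Sigma(L) = L^{[P^{(N)}]}$, so $\sum_i a_i L_i^{[P^{(N)}]} = F_\Sigma\big(\sum_i a_i L_i\big) = 0$ in $\Sq(\Sigma)$. The inclusion $U \hookrightarrow M$ induces $\iota_* \colon \Sq(U) \to \Sq(M)$ (Section~\ref{Functoriality skein module}), which regards a web in $U$ as a web in $M$ and hence commutes with threading; applying $\iota_*$ gives $\sum_i a_i L_i^{[P^{(N)}]} = 0$ in $\Sq(M)$. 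Thus $F_M$ is a well-defined homomorphism of $\mathcal{R}$-modules with $F_M(L) = L^{[P^{(N)}]}$. Equivalently, one may write $\Sc(M)$ and $\Sq(M)$ as colimits of $\Sc(\Sigma)$ and $\Sq(\Sigma)$ over embedded thickened surfaces $\Sigma \times I \subset M$ and set $F_M = \mathrm{colim}\, F_\Sigma$, the $F_\Sigma$ being compatible with the inclusion-induced maps because on the spanning set of links both sides send $L$ to its threading.

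For the transparency statement, I would invoke \cite{BH23}: the transparent elements of $\Sq(M)$ form an $\mathcal{R}$-submodule, and by \cite[Theorem 1]{BH23} the element $L^{[P^{(N)}]}$ is transparent for every framed link $L \subset M$. Since links span $\Sc(M)$, the image of $F_M$ is the $\mathcal{R}$-span of the elements $L^{[P^{(N)}]}$ and is therefore contained in this submodule. The only subtle point in the argument is the topological reduction in the second paragraph — that any link, together with a finite collection of balls supporting local skein moves, can be engulfed in an embedded thickened surface inside $\mathrm{int}(M)$ — which is a standard fact of $3$-manifold topology; beyond this there is no new algebraic or computational input, everything being inherited from Theorem~\ref{Frobenius surface} and the transparency theorem of \cite{BH23}.
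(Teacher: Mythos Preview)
Your argument is correct and follows the same underlying strategy as the paper: reduce to the already-established surface case (Theorem~\ref{Frobenius surface}) via the functoriality of skein modules under embeddings of thickened surfaces. The organization differs in two respects. First, the paper defines $F_M$ on \emph{webs} rather than links, using the framing surface $\Sigma_W$ that accompanies each web $W$ by definition, and sets $F_M(W)=w_*(F_{\Sigma_W}(W))$. Second, for well-definedness the paper checks one \emph{defining} skein relation at a time: if webs $W_i$ agree outside a thickened disk $D\times I$, then gluing $D\times I$ to any one $\Sigma_{W_k}$ produces a single thickened surface $\Sigma_{\hat W}$ containing all the $\Sigma_{W_i}$, in which the relation already holds by Theorem~\ref{Frobenius surface}. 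You instead work with links from the outset and treat an arbitrary relation $\sum a_iL_i=0$ in one stroke by engulfing all the witnessing data (the $L_i$, the intermediate webs, the relation balls, and implicitly the tracks of the isotopies involved) in a handlebody $U\cong\Sigma\times I$. Your route is arguably tidier conceptually, while the paper's avoids the engulfing step entirely by exploiting the framing surface already present in the definition of a web. One minor point: for transparency in a general $3$-manifold the paper cites \cite[Theorem~2]{BH23} rather than Theorem~1, the latter being the surface statement.
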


\begin{proof}

Let $W \subset M$ be a web representing an element of the skein module $\Sc(M).$ By definition, the web $W$ is equipped with a thickened oriented surface $\Sigma_{W}\subset M$ containing $W$ and deformation retracting onto it. Since $\Sigma_W$ is a thickened surface, by Theorem \ref{Frobenius surface} we have a well-defined Frobenius homomorphism $F_{\Sigma_W}.$ Let $w: \Sigma_{W} \hookrightarrow M$ be the natural inclusion of 3-manifolds. We will define $F_{M}(W)$ to be the element $w_*(F_{\Sigma_{W}}(W)) \in \Sq(M).$ So $F_M(W)$ can be computed by rewriting $W$ in $\Sc(\Sigma_{W})$ as any linear combination of links in a small neighborhood of $W$, and then applying the operation of threading $P^{(N)}$ along the links. To finish the proof, we must show this construction of $F_M$ is well-defined on $\Sc(M).$ 

It is clear that $F_M$ respects any isotopy of the web $W$ in $M.$ We need to show that the definition of $F_M$ respects the defining skein relations of $\Sc(M).$ Consider a skein relation of (\ref{2gon})-(\ref{loop}) represented by a linear combination of webs $\sum a_iW_i$ satisfying
\begin{equation*}
\sum a_i[W_i]=0 \in \Sc(M),
\end{equation*}  such that the webs $W_i$ are identical to each other outside of a thickened disk in $M.$

To show that $F_M$ respects the skein relation, our goal is to show that

\begin{equation}\label{goal}
\sum a_i {w_i}_*(F_{\Sigma_{W_i}}([W_i]))=0 \in \Sq(M).
\end{equation}

Each web $W_i$ is equipped with a thickened surface $\Sigma_{W_i} \subset M$ identical to each other outside of a thickened disk  $D \times I \subset M.$ For any fixed $k,$ we can glue $D \times I$ to $\Sigma_{W_k} \setminus (\Sigma_{W_k} \cap (D \times I))$ to obtain a thickened surface $\Sigma_{\hat{W}}$ which contains each of the thickened surfaces $\Sigma_{W_i}.$ Let $\hat{w}: \Sigma_{\hat{W}} \hookrightarrow M$ be the inclusion and, for each $i,$ let $\hat{w}_i$ denote the inclusion $\hat{w}_i: \Sigma_{W_i} \hookrightarrow \Sigma_{\hat{W}}.$ We have that $w_i=\hat{w} \circ \hat{w}_i$ and by the functoriality of skein modules we consequently have that
\begin{equation*}
{w_i}_*=\hat{w}_* \circ \hat{w}_{i*}.
\end{equation*}

Now inside of $\Sigma_{\hat{W}},$ the linear combination of webs $\sum a_iW_i$ satisfies \begin{equation*}
\sum a_i \hat{w}_{i*} ([W_i])=0 \in \Sc(\Sigma_{\hat{W}}).
\end{equation*} Since $F_{\Sigma_{\hat{W}}}$ is well-defined, we can compute its image on $\sum a_iW_i$ by rewriting each $W_i=\sum b_{ij}L_{ij}$ as a linear combination of links in $\Sc(\Sigma_{\hat{W}})$ and then applying the threading operation. We can choose the links so that each $L_{ij}$ is contained in a neighborhood of $W_i.$ By the fact that  $F_{\Sigma_{\hat{W}}}$ is well-defined, we have that
\begin{equation*}
\sum a_i \hat{w}_{i*}(b_{ij}[L_{ij}^{[P^{(N)}]}])=0  \in \Sq(\Sigma_{\hat{W}}),
\end{equation*}
and whence

\begin{equation*}
\hat{w}_*(\sum a_i\hat{w}_{i*}(b_{ij}[L_{ij}^{[P^{(N)}]}]))=0 \in \Sq(M)
\end{equation*}
as well. Since $w_{i*}=\hat{w}_* \circ \hat{w}_{i*},$ we have shown that (\ref{goal}) holds and we see that $F_M$ respects the skein relations, as required.

To see that the image of $F_M$ is transparent, one could use the centrality of Theorem \ref{Frobenius surface}, or alternatively this was proven in \cite[Theorem 2]{BH23}.

\end{proof}

\bibliographystyle{amsalpha}
\bibliography{SL3.bib}

\newcommand{\etalchar}[1]{$^{#1}$}
\providecommand{\bysame}{\leavevmode\hbox to3em{\hrulefill}\thinspace}
\providecommand{\MR}{\relax\ifhmode\unskip\space\fi MR }
% \MRhref is called by the amsart/book/proc definition of \MR.
\providecommand{\MRhref}[2]{%
  \href{http://www.ams.org/mathscinet-getitem?mr=#1}{#2}
}
\providecommand{\href}[2]{#2}
\begin{thebibliography}{BGBH{\etalchar{+}}23}

\bibitem[BG02]{BG02}
Ken~A. Brown and Ken~R. Goodearl, \emph{Lectures on algebraic quantum groups},
  Advanced Courses in Mathematics. CRM Barcelona, Birkh\"{a}user Verlag, Basel,
  2002. \MR{1898492}

\bibitem[BGBH{\etalchar{+}}23]{BBHHMP23}
Bodie Beaumont-Gould, Erik Brodsky, Vijay Higgins, Alaina Hogan, Joseph~M.
  Melby, and Joshua Piazza, \emph{Power sum elements in the {$G_2$} skein
  algebra}, 2023, arXiv:2310.01773, to appear in Algebr. Geom. Topol.

\bibitem[BH23]{BH23}
Francis Bonahon and Vijay Higgins, \emph{Central elements in the
  $\mathrm{SL}_d$-skein algebra of a surface}, 2023, arXiv:2308.13691, to
  appear in Math. Z.

\bibitem[BL22]{BL22}
Wade Bloomquist and Thang T.~Q. L\^{e}, \emph{The {C}hebyshev-{F}robenius
  homomorphism for stated skein modules of 3-manifolds}, Math. Z. \textbf{301}
  (2022), no.~1, 1063--1105. \MR{4405678}

\bibitem[Bon19]{Bon19}
Francis Bonahon, \emph{Miraculous cancellations for quantum {${\rm SL}_2$}},
  Ann. Fac. Sci. Toulouse Math. (6) \textbf{28} (2019), no.~3, 523--557.
  \MR{4044434}

\bibitem[BR22]{BR22}
St\'{e}phane Baseilhac and Philippe Roche, \emph{Unrestricted quantum moduli
  algebras. {I}. {T}he case of punctured spheres}, SIGMA Symmetry Integrability
  Geom. Methods Appl. \textbf{18} (2022), Paper No. 025, 78. \MR{4400433}

\bibitem[Bul97]{Bul97}
Doug Bullock, \emph{Rings of {${\rm SL}_2({\mathbb{C}})$}-characters and the
  {K}auffman bracket skein module}, Comment. Math. Helv. \textbf{72} (1997),
  no.~4, 521--542. \MR{1600138}

\bibitem[BW11]{BW11}
Francis Bonahon and Helen Wong, \emph{Quantum traces for representations of
  surface groups in {${SL}_2(\mathbb{C})$}}, Geom. Topol. \textbf{15} (2011),
  no.~3, 1569--1615. \MR{2851072}

\bibitem[BW16]{BW16}
\bysame, \emph{Representations of the {K}auffman bracket skein algebra {I}:
  invariants and miraculous cancellations}, Invent. Math. \textbf{204} (2016),
  no.~1, 195--243. \MR{3480556}

\bibitem[CF99]{CF99}
L.~O. Chekhov and V.~V. Fok, \emph{Quantum {T}eichm\"{u}ller spaces}, Teoret.
  Mat. Fiz. \textbf{120} (1999), no.~3, 511--528. \MR{1737362}

\bibitem[CKL24]{CKL24}
Francesco Costantino, Julien Korinman, and Thang L\^{e}, 2024, in preparation.

\bibitem[CKM14]{CKM14}
Sabin Cautis, Joel Kamnitzer, and Scott Morrison, \emph{Webs and quantum skew
  {H}owe duality}, Math. Ann. \textbf{360} (2014), no.~1-2, 351--390.
  \MR{3263166}

\bibitem[CL22a]{CL19}
Francesco Costantino and Thang T.~Q. L\^{e}, \emph{Stated skein algebras of
  surfaces}, J. Eur. Math. Soc. (JEMS) \textbf{24} (2022), no.~12, 4063--4142.
  \MR{4493620}

\bibitem[CL22b]{CL22}
Francesco Costantino and Thang T.~Q. L\^{e}, \emph{Stated skein modules of
  3-manifolds and {TQFT}}, 2022, arXiv:2206.10906.

\bibitem[DS24]{DS24}
Daniel~C. Douglas and Zhe Sun, \emph{Tropical {F}ock-{G}oncharov coordinates
  for {${\rm SL}_3$}-webs on surfaces {I}: construction}, Forum Math. Sigma
  \textbf{12} (2024), Paper No. e5, 55. \MR{4685684}

\bibitem[Eli15]{Eli15}
Ben Elias, \emph{Light ladders and clasp conjectures}, 2015, arXiv:1510.06840.

\bibitem[FG00]{FG00}
Charles Frohman and R\u{a}zvan Gelca, \emph{Skein modules and the
  noncommutative torus}, Trans. Amer. Math. Soc. \textbf{352} (2000), no.~10,
  4877--4888. \MR{1675190}

\bibitem[FKBL19]{FKBL19}
Charles Frohman, Joanna Kania-Bartoszynska, and Thang L\^{e}, \emph{Unicity for
  representations of the {K}auffman bracket skein algebra}, Invent. Math.
  \textbf{215} (2019), no.~2, 609--650. \MR{3910071}

\bibitem[FS22]{FS22}
Charles Frohman and Adam~S. Sikora, \emph{{$SU(3)$}-skein algebras and webs on
  surfaces}, Math. Z. \textbf{300} (2022), no.~1, 33--56. \MR{4359515}

\bibitem[GJS20]{GJS19}
Iordan Ganev, David Jordan, and Pavel Safronov, \emph{The quantum frobenius for
  character varieties and multiplicative quiver varieties}, 2020,
  arXiv:1901.11450.

\bibitem[Hig23]{Hig23}
Vijay Higgins, \emph{Triangular decomposition of {$SL_3$} skein algebras},
  Quantum Topol. \textbf{14} (2023), no.~1, 1--63.

\bibitem[Kim22]{Kim22}
Hyun~Kyu Kim, \emph{${SL}_3$-laminations as bases for ${PGL}_3$ cluster
  varieties for surfaces}, 2022, arXiv:2011.14765.

\bibitem[KLW24]{KLW24}
Hyun~Kyu Kim, Thang T.~Q. L{\^{e}}, and Zhihao Wang, \emph{Frobenius maps for
  stated {SLn}-skein algebras}, 2024, in prepration.

\bibitem[KQ19]{KQ19}
Julien Korinman and Alexandre Quesney, \emph{Classical shadows of stated skein
  representations at roots of unity}, 2019, arXiv:1905.03441.

\bibitem[Kup94]{Kup94}
Greg Kuperberg, \emph{The quantum {$G_2$} link invariant}, Internat. J. Math.
  \textbf{5} (1994), no.~1, 61--85. \MR{1265145}

\bibitem[Kup96]{Kup96}
\bysame, \emph{Spiders for rank {$2$} {L}ie algebras}, Comm. Math. Phys.
  \textbf{180} (1996), no.~1, 109--151. \MR{1403861}

\bibitem[KW24]{KW24}
Hyun~Kyu Kim and Zhihao Wang, \emph{The unicity theorem and the center of the
  ${SL}_3$-skein algebra}, 2024, arXiv:2407.16812.

\bibitem[L{\^{e}}15]{Le15}
Thang T.~Q. L{\^{e}}, \emph{On {K}auffman bracket skein modules at roots of
  unity}, Algebr. Geom. Topol. \textbf{15} (2015), no.~2, 1093--1117.
  \MR{3342686}

\bibitem[L{\^{e}}18]{Le18}
\bysame, \emph{Triangular decomposition of skein algebras}, Quantum Topol.
  \textbf{9} (2018), no.~3, 591--632. \MR{3827810}

\bibitem[Lic97]{Lick97}
W.~B.~Raymond Lickorish, \emph{An introduction to knot theory}, Graduate Texts
  in Mathematics, vol. 175, Springer-Verlag, New York, 1997. \MR{1472978}

\bibitem[LP19]{LP19}
Thang T.~Q. L\^{e} and Jonathan Paprocki, \emph{On {K}auffman bracket skein
  modules of marked 3-manifolds and the {C}hebyshev-{F}robenius homomorphism},
  Algebr. Geom. Topol. \textbf{19} (2019), no.~7, 3453--3509. \MR{4045358}

\bibitem[LS22]{LS22}
Thang T.~Q. Lê and Adam~S. Sikora, \emph{Stated {SL(n)}-skein modules and
  algebras}, 2022, arXiv:2201.00045.

\bibitem[Lus90]{Lus90}
George Lusztig, \emph{Quantum groups at roots of {$1$}}, Geom. Dedicata
  \textbf{35} (1990), no.~1-3, 89--113. \MR{1066560}

\bibitem[LY23]{LY23}
Thang T.~Q. Lê and Tao Yu, \emph{Quantum traces for ${SL}_n$-skein algebras},
  2023, arXiv:2303.08082.

\bibitem[MOY98]{MOY98}
Hitoshi Murakami, Tomotada Ohtsuki, and Shuji Yamada, \emph{Homfly polynomial
  via an invariant of colored plane graphs}, Enseign. Math. (2) \textbf{44}
  (1998), no.~3-4, 325--360. \MR{1659228}

\bibitem[MPS23]{MPS23}
Hugh Morton, Alex Pokorny, and Peter Samuelson, \emph{The {K}auffman skein
  algebra of the torus}, Int. Math. Res. Not. IMRN (2023), no.~1, 855--900.
  \MR{4530122}

\bibitem[MQ23]{MQ23}
Travis Mandel and Fan Qin, \emph{Bracelets bases are theta bases}, 2023,
  arXiv:2301.11101.

\bibitem[MS17]{MS17}
Hugh Morton and Peter Samuelson, \emph{The {HOMFLYPT} skein algebra of the
  torus and the elliptic {H}all algebra}, Duke Math. J. \textbf{166} (2017),
  no.~5, 801--854. \MR{3626565}

\bibitem[OY97]{OY97}
Tomotada Ohtsuki and Shuji Yamada, \emph{Quantum {${\rm SU}(3)$} invariant of
  {$3$}-manifolds via linear skein theory}, J. Knot Theory Ramifications
  \textbf{6} (1997), no.~3, 373--404. \MR{1457194}

\bibitem[Prz91]{Prz91}
J\'{o}zef~H. Przytycki, \emph{Skein modules of {$3$}-manifolds}, Bull. Polish
  Acad. Sci. Math. \textbf{39} (1991), no.~1-2, 91--100. \MR{1194712}

\bibitem[PW91]{PW91}
Brian Parshall and Jian~Pan Wang, \emph{Quantum linear groups}, Mem. Amer.
  Math. Soc. \textbf{89} (1991), no.~439, vi+157. \MR{1048073}

\bibitem[Que22]{Queff22}
Hoel Queffelec, \emph{Gl2 foam functoriality and skein positivity}, 2022,
  arXiv:2209.08794.

\bibitem[RT90]{RT90}
N.~Yu. Reshetikhin and V.~G. Turaev, \emph{Ribbon graphs and their invariants
  derived from quantum groups}, Comm. Math. Phys. \textbf{127} (1990), no.~1,
  1--26. \MR{1036112}

\bibitem[Sik05]{Sik05}
Adam~S. Sikora, \emph{Skein theory for {${\rm SU}(n)$}-quantum invariants},
  Algebr. Geom. Topol. \textbf{5} (2005), 865--897. \MR{2171796}

\bibitem[SW07]{SW07}
Adam~S. Sikora and Bruce~W. Westbury, \emph{Confluence theory for graphs},
  Algebr. Geom. Topol. \textbf{7} (2007), 439--478. \MR{2308953}

\bibitem[Thu14]{Thur14}
Dylan~Paul Thurston, \emph{Positive basis for surface skein algebras}, Proc.
  Natl. Acad. Sci. USA \textbf{111} (2014), no.~27, 9725--9732. \MR{3263305}

\bibitem[Tur88]{Tur88}
V.~G. Turaev, \emph{The {C}onway and {K}auffman modules of a solid torus}, Zap.
  Nauchn. Sem. Leningrad. Otdel. Mat. Inst. Steklov. (LOMI) \textbf{167}
  (1988), no.~Issled. Topol. 6, 79--89, 190. \MR{964255}

\bibitem[Wan23]{Wang23}
Zhihao Wang, \emph{On stated {$SL(n)$}-skein modules}, 2023, arXiv:2307.10288.

\bibitem[Wan24]{Wang24}
\bysame, \emph{Stated {$SL_n$}-skein modules, roots of unity, and {TQFT}},
  2024, arXiv:2401.09995.

\end{thebibliography}

\end{document}